\numberwithin{equation}{section}
\newtheorem{theorem}{Theorem}[subsection]
\newtheorem{proposition}[theorem]{Proposition}
\newtheorem{corollary}[theorem]{Corollary}
\newtheorem{definition}[theorem]{Definition}
\newtheorem{lemma}[theorem]{Lemma}
\theoremstyle{remark}
\newtheorem{remark}[theorem]{Remark}
\title{Lipschitz estimates in quasi-Banach Schatten ideals}
\date{\today}
\author{E. McDonald}
\author{F. Sukochev}
\begin{document}
\maketitle{}

\begin{abstract}
We study the class of functions $f$ on $\mathbb{R}$ satisfying a Lipschitz estimate in the Schatten ideal $\mathcal{L}_p$ for $0 < p \leq 1$.
The corresponding problem with $p\geq 1$ has been extensively studied, but the quasi-Banach range $0 < p < 1$ is by comparison poorly understood. Using techniques from wavelet analysis, we prove that Lipschitz functions belonging to the homogeneous Besov class $\dot{B}^{\frac{1}{p}}_{\frac{p}{1-p},p}(\mathbb{R})$ obey the estimate
$$
    \|f(A)-f(B)\|_{p} \leq C_{p}(\|f'\|_{L_{\infty}(\mathbb{R})}+\|f\|_{\dot{B}^{\frac{1}{p}}_{\frac{p}{1-p},p}(\mathbb{R})})\|A-B\|_{p}
$$
for all bounded self-adjoint operators $A$ and $B$ with $A-B\in \mathcal{L}_p$. In the case $p=1$, our methods recover and provide a new perspective on a result of Peller that $f \in \dot{B}^1_{\infty,1}$ is sufficient for a function to be Lipschitz in $\mathcal{L}_1$. 
We also provide related H\"older-type estimates, extending results of Aleksandrov and Peller.
In addition, we prove the surprising fact that non-constant periodic functions on $\mathbb{R}$
are not Lipschitz in $\mathcal{L}_p$ for any $0 < p < 1$. This gives counterexamples to a 1991 conjecture of Peller that $f \in \dot{B}^{1/p}_{\infty,p}(\mathbb{R})$ is sufficient for $f$
to be Lipschitz in $\mathcal{L}_p$.
\end{abstract}

\section{Introduction}
Let $H$ be a (complex and separable) Hilbert space, and denote
the operator norm by $\|\cdot\|_\infty$. A function $f:\mathbb{R}\to \mathbb{C}$
is said to be \emph{operator Lipschitz} if there exists a constant $C_f$ such that
\begin{equation*}
    \|f(A)-f(B)\|_\infty \leq C_f\|A-B\|_\infty,\quad A,B\in \mathcal{B}_{\mathrm{sa}}(H)
\end{equation*}
where $\mathcal{B}_{\mathrm{sa}}(H)$ denotes the space of all bounded self-adjoint linear operators on $H$. It has been
known since the work of Farforovskaya that not all Lipschitz
functions are operator Lipschitz \cite{Farforovskaja-1972} and it was later discovered that even the absolute value function $f(t) = |t|$ is not operator Lipschitz \cite{Kato1973,Davies-jlms-1988}. The problem of characterising the class
of operator Lipschitz functions has received considerable attention,
with early contributions from Daletskii and Krein \cite{Daletskii-Krein-1951,Daletskii-Krein-1956} and substantial advances by Birman, Solomyak \cite{Birman-Solomyak-I,Birman-Solomyak-II,Birman-Solomyak-III}, Aleksandrov, and Peller \cite{Aleksandrov-Peller-holder-2010,Aleksandrov-Peller-holder-zygmund-2010,Peller-besov-1990}. Some surveys on the topic are \cite{Aleksandrov-Peller-survey,Birman-Solomyak-2003,Peller-survey-2010,SkripkaTomskova}. 
At present no analytic condition on $f$ that is both necessary and sufficient for $f$ to be operator Lipschitz is known, however it has been proved by Peller that it is sufficient for $f$ to be Lipschitz and in the homogeneous Besov class $\dot{B}^{1}_{\infty,1}(\mathbb{R})$ 
\cite[Theorem 2]{Peller-besov-1990}. 
In other words, it suffices that $f$ be Lipschitz and
\begin{equation*}
    \int_0^\infty \sup_{t \in \mathbb{R}} |f(t+h)-2f(t)+f(t-h)|\frac{dh}{h^2} < \infty.
\end{equation*}    
Slightly weaker sufficient conditions are due to Arazy, Barton and Friedman \cite{Arazy-Barton-Friedman-1990}, \cite[Section 3.13]{Aleksandrov-Peller-survey}.

A more general problem which has also been of interest to many authors involves Lipschitz estimates
in operator ideals, the most important of which are the Schatten-von Neumann ideals.
For $0 < p < \infty$ the Schatten-von Neumann ideal $\mathcal{L}_p$ is defined as the class of operators $T$ on $H$ with $\|T\|_p := \mathrm{Tr}(|T|^p)^{1/p} < \infty$, where $\mathrm{Tr}$ is the operator trace. A function $f$ is said to be $\mathcal{L}_p$-operator Lipschitz if there is a constant $C_{f,p}>0$ such that
\begin{equation}\label{lipschitz_estimate}
    \|f(A)-f(B)\|_p \leq C_{f,p}\|A-B\|_p,\quad A,B\in \mathcal{B}_{\mathrm{sa}}(H),\;A-B\in \mathcal{L}_p.
\end{equation}
It is well-known that all Lipschitz functions are $\mathcal{L}_2$-operator Lipschitz, and that the class of $\mathcal{L}_1$-operator Lipschitz functions coincides with the class of
operator Lipschitz functions.
It is now known that if $1 < p <\infty$ then for \eqref{lipschitz_estimate} to hold it is necessary and sufficient that $f$ be Lipschitz \cite{PS-acta}. 
It is also known that Lipschitz functions satisfy a weak-type estimate in $\mathcal{L}_1$ \cite{CPSZ1,cpsz}.

By contrast, the range $0 < p < 1$ is poorly understood. The primary obstacle is that for this range of $p$, the ideal $\mathcal{L}_p$ is not a Banach space, but merely a quasi-Banach space. 
The failure of the triangle inequality makes many of the methods used in the $p \geq 1$ case inapplicable. For example, Peller's proof in \cite{Peller-besov-1990}
of the sufficiency of $f \in \dot{B}^1_{\infty,1}(\mathbb{R})$ for a Lipschitz function to be operator Lipschitz is based on a representation of $f(A)-f(B)$ as an operator-valued integral. Since $\mathcal{L}_p$ is a quasi-Banach space when $p<1$, the usual theories of Bochner or weak integration break down for $\mathcal{L}_p$-valued functions and it does not appear to be possible to adapt the proof of \cite[Theorem 2]{Peller-besov-1990} to the quasi-Banach case. Nonetheless, a number of important results for $0 < p < 1$ have been found by Rotfel'd \cite{Rotfeld1968,Rotfeld-trudy-1977}, Aleksandrov and Peller \cite{Peller-p-lt-1-1987,Aleksandrov-Peller-hankel-and-toeplitz-2002,Aleksandrov-Peller-acb-2020}, and Ricard \cite{Ricard-2018}.

For $0 < p < 1$ some results are known in the corresponding theory of operator Lipschitz functions of unitary operators. Peller has proved \cite[Theorem 1]{Peller-p-lt-1-1987} that if $0 < p \leq 1$ and $\phi \in B^{\frac{1}{p}}_{\infty,p}(\mathbb{T})$ (a Besov
space on the unit circle) then for all unitary operators $U$ and $V$ on $H$ with $U-V \in \mathcal{L}_p$ we have the inequality
\begin{equation}\label{circle_lipschitz}
    \|\phi(U)-\phi(V)\|_p \leq c_p\|\phi\|_{B^{\frac{1}{p}}_{\infty,p}(\mathbb{T})}\|U-V\|_p.
\end{equation}
Peller also proved \cite[Theorem 3]{Peller-p-lt-1-1987} that the condition $\phi \in B^{1/p}_{p,p}(\mathbb{T})$ is necessary for $\phi$ to satisfy a Lipschitz estimate of the form \eqref{circle_lipschitz} (but possibly with a different constant).

In 1991, Peller conjectured that a similar result holds for functions on $\mathbb{R}$, namely that if $f \in \dot{B}^{\frac{1}{p}}_{\infty,p}(\mathbb{R})$ then $f$
is $\mathcal{L}_p$-operator Lipschitz \cite[Page 14]{Peller-besov-1990}.

Via the Cayley transform, it is possible to directly infer sufficient conditions for a function $f:\mathbb{R}\to\mathbb{C}$ to satisfy \eqref{lipschitz_estimate} from \eqref{circle_lipschitz}.
However, we are not aware of any previous detailed study of $\mathcal{L}_p$-operator Lipschitz functions on $\mathbb{R}$ for $0 < p < 1.$ The following surprising example (proved in Section \ref{periodic_section}) is evidence that the 
theory is in fact very different from the case of functions on $\mathbb{T}$.
\begin{theorem}\label{periodic_failure}
    Let $f:\mathbb{R}\to\mathbb{C}$ be a non-constant periodic function. Then $f$ is not $\mathcal{L}_p$-operator Lipschitz for any $p \in (0,1)$. 
\end{theorem}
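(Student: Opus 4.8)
The plan is to refute the estimate \eqref{lipschitz_estimate} by an explicit finite-rank construction exploiting, for $0<p<1$, the mismatch between the behaviour of the quasinorm $\|\cdot\|_p$ on rank-one operators and on the identity of an $N$-dimensional space. Two preliminary reductions: testing \eqref{lipschitz_estimate} on $A=aP$, $B=bP$ for a rank-one projection $P$ shows that an $\mathcal{L}_p$-operator Lipschitz function is Lipschitz in the ordinary sense, so we may assume $f$ is Lipschitz (hence absolutely continuous); and since replacing $f(\cdot)$ by $f(\lambda\,\cdot)$ and $A,B$ by $\lambda^{-1}A,\lambda^{-1}B$ leaves \eqref{lipschitz_estimate} invariant, we may assume $f$ has period $2\pi$. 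A Lipschitz function is differentiable almost everywhere, and if $f'=0$ almost everywhere then $f$ is constant; so we may fix $c\in\mathbb{R}$ with $f$ differentiable at $c$ and $f'(c)\neq0$.

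For each $N\ge1$ I would work on an $N$-dimensional subspace of $H$ (extending the operators below by a scalar on its complement, which is harmless) and take $A_N=\operatorname{diag}(2\pi+c,4\pi+c,\dots,2\pi N+c)$, so that $f(A_N)=f(c)I_N$ by $2\pi$-periodicity, together with the rank-one perturbation $B_{N,t}=A_N+t\,vv^{*}$, where $v=N^{-1/2}(1,\dots,1)$ is a unit vector; then $\|A_N-B_{N,t}\|_p=t$ for $t>0$. Since $A_N$ has simple spectrum, first-order perturbation theory gives $\lambda_j(t)=2\pi j+c+\tfrac{t}{N}+o_N(t)$ as $t\to0$ for the eigenvalues of $B_{N,t}$ (with $\lambda_j'(0)=|v_j|^2=N^{-1}$), uniformly in $j$. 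Setting $g=f-f(c)$, periodicity forces $g$ to vanish with derivative $f'(c)$ at every point of $2\pi\mathbb{Z}+c$, and the corresponding first-order Taylor remainder $\rho(h)=g(c+h)-hf'(c)$ is the same function $o(h)$ for all of them; hence $g(\lambda_j(t))=\tfrac{t}{N}f'(c)+o_N(t)$ uniformly in $j$, and since the rank-one spectral projections of $B_{N,t}$ sum to $I_N$ we obtain $f(B_{N,t})-f(A_N)=g(B_{N,t})=\tfrac{t\,f'(c)}{N}I_N+E_N(t)$ with $\|E_N(t)\|_p\le N^{1/p}o_N(t)$.

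Now $\|\tfrac{t f'(c)}{N}I_N\|_p=|f'(c)|\,t\,N^{1/p-1}$, so using subadditivity of $\|\cdot\|_p^p$ one may, for each $N$, pick $t=t_N>0$ small enough that $\|f(A_N)-f(B_{N,t_N})\|_p\ge c_p\,|f'(c)|\,t_N\,N^{1/p-1}$, with a constant $c_p>0$ depending only on $p$. If $f$ satisfied \eqref{lipschitz_estimate} with constant $C_{f,p}$, then since $\|A_N-B_{N,t_N}\|_p=t_N$ we would get $c_p\,|f'(c)|\,N^{1/p-1}\le C_{f,p}$ for every $N$, which is impossible because $1/p-1>0$. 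Therefore $f$ is not $\mathcal{L}_p$-operator Lipschitz.

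The step requiring the most care is the uniform-in-$j$ estimate $g(\lambda_j(t))-\tfrac{t}{N}f'(c)=o_N(t)$, and it is precisely here that periodicity is indispensable: the local behaviour of $f$ near each of the widely separated eigenvalues $2\pi j+c$ is governed by the single one-variable remainder $\rho$, so one estimate controls all $N$ diagonal entries at once and their contributions add up coherently to a multiple of $I_N$. The remaining ingredients — first-order perturbation of a simple spectrum and the value of $\|\cdot\|_p$ on a diagonal matrix — are elementary.
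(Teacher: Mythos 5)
Your proposal is correct, and it proves the theorem by a genuinely different route from the paper. You refute \eqref{lipschitz_estimate} directly by a finite-dimensional construction: after reducing (legitimately, via rank-one tests and rescaling) to a Lipschitz $2\pi$-periodic $f$ with a point $c$ of differentiability where $f'(c)\neq 0$, you take $A_N$ diagonal with eigenvalues $2\pi j+c$ (so $f(A_N)=f(c)I_N$) and the rank-one perturbation $B_{N,t}=A_N+tvv^*$ along the flat unit vector; first-order perturbation theory (for fixed $N$ the uniformity in $j$ is automatic, and $t_N$ is chosen after $N$, so the $o_N(t)$ bookkeeping is sound) together with the single Taylor remainder at $c$ gives $f(B_{N,t})-f(A_N)=\tfrac{tf'(c)}{N}I_N+E_N(t)$ with $\|E_N(t)\|_p\leq N^{1/p}o_N(t)$, and the reverse $p$-triangle inequality then forces any Lipschitz constant to dominate $c_p|f'(c)|N^{1/p-1}$, which is impossible for $p<1$. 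The paper argues instead through Schur multipliers: by Lemma \ref{lp_lip_implies_lp_schur}, an $\mathcal{L}_p$-operator Lipschitz $f$ would make $f^{[1]}$ an $\mathrm{m}_p$-multiplier over disjoint sequences; evaluating $f^{[1]}$ along the two arithmetic progressions $j+\varepsilon$ and $k$ produces (up to the nonzero factor $f(\varepsilon)-f(0)$) the Toeplitz matrix $\{1/(\varepsilon+j-k)\}_{j,k\geq0}$, which Lemma \ref{toeplitz_is_not_schur} shows is not an $\mathrm{m}_p$-multiplier by testing against spread-out rank-one matrices and letting the spacing tend to infinity to extract the diagonal -- the same $N^{1/p}$-versus-$N$ mismatch that drives your argument, but located at the multiplier level. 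What each approach buys: yours is self-contained at the operator level and bypasses the transference Lemma \ref{lp_lip_implies_lp_schur} (which the paper states without proof), at the cost of the preliminary reduction to a differentiability point with nonzero derivative; the paper's route needs no smoothness or Lipschitz reduction at all and yields the formally stronger Theorem \ref{compact_theorem}, that the divided-difference matrix of any non-constant periodic function fails to be an $\mathcal{L}_p$-bounded Schur multiplier even over disjoint sequences, which ties the counterexample to the Aleksandrov--Peller Toeplitz--Schur multiplier theory.
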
    
Theorem \ref{periodic_failure} proves that even $C^\infty$ functions with all derivatives bounded are not necessarily $\mathcal{L}_p$-operator Lipschitz for any $0 < p < 1$. 
In particular, the function $h(t) = e^{it}$ is not $\mathcal{L}_p$-operator Lipschitz for any $p \in (0,1)$. This provides a counterexample to Peller's conjecture stated above, as $h$ belongs to the homogeneous Besov space $\dot{B}^{1/p}_{\infty,p}(\mathbb{R})$ for every $p \in (0,1)$.

The main result of this paper is the following theorem, which provides a general
sufficient condition for a function to be $\mathcal{L}_p$-operator Lipschitz naturally extending \cite[Theorem 2]{Peller-besov-1990}.
\begin{theorem}\label{main_sufficient_condition}
    Let $0 < p \leq 1$, and let $f$ be a Lipschitz function on $\mathbb{R}$ belonging to the homogeneous Besov space $\dot{B}^{\frac{1}{p}}_{\frac{p}{1-p},p}(\mathbb{R})$. There exists a constant $C_p > 0$ such that for 
    all bounded self-adjoint operators $A$ and $B$ with $A-B\in \mathcal{L}_p$ we have
    $$
        \|f(A)-f(B)\|_{p} \leq C_{p}(\|f'\|_{L_\infty(\mathbb{R})}+\|f\|_{\dot{B}^{\frac{1}{p}}_{\frac{p}{1-p},p}(\mathbb{R})})\|A-B\|_{p}.
    $$
    (Here, and throughout, we use the convention that $\frac{p}{1-p}=\infty$ when $p=1$.)
\end{theorem}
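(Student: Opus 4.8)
The plan is to pass from the operator-Lipschitz inequality to a sum of operators converging in $\mathcal{L}_p$ in the elementary sense, so that the $p$-triangle inequality $\|\sum_i T_i\|_p^p\le\sum_i\|T_i\|_p^p$ does all the work and no $\mathcal{L}_p$-valued integration is needed. First I would fix a compactly supported (or Schwartz) wavelet $\psi$ with large smoothness and many vanishing moments and write $f$, modulo an affine function $P(t)=at+b$, as $f=\sum_{j,k}c_{j,k}\psi_{j,k}$ with $\psi_{j,k}=\psi(2^j\cdot-k)$ (the $L_\infty$-normalisation), using the wavelet description of the homogeneous Besov space in the form
\[
 \|f\|_{\dot{B}^{1/p}_{p/(1-p),p}}\asymp\Big(\sum_{j\in\mathbb{Z}}2^{jp}\,\|(c_{j,k})_k\|_{\ell_{p/(1-p)}}^p\Big)^{1/p},
\]
with $\ell_{p/(1-p)}$ read as $\ell_\infty$ when $p=1$. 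The affine part contributes $P(A)-P(B)=a(A-B)$ with $|a|\le\|f'\|_{L_\infty}$; that $f$ is Lipschitz is used precisely to guarantee that $f(A)-f(B)$ is bounded and that the piece not seen by the homogeneous seminorm is only this affine remainder. Writing $g_j:=\sum_k c_{j,k}\psi_{j,k}$ for the $j$-th dyadic block, so that $f(A)-f(B)=a(A-B)+\sum_j\big(g_j(A)-g_j(B)\big)$, it suffices to prove the single-scale estimate
\[
 \|g_j(A)-g_j(B)\|_p\le C_p\,2^j\,\|(c_{j,k})_k\|_{\ell_{p/(1-p)}}\,\|A-B\|_p,
\]
raise it to the power $p$, and sum over $j$.

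For the single-scale estimate I would represent $\psi_{j,k}(A)-\psi_{j,k}(B)=T_{\Phi_{j,k}}(A-B)$, where $T_{\Phi_{j,k}}$ is the double operator integral attached to the divided difference $\Phi_{j,k}(\lambda,\mu)=(\psi_{j,k}(\lambda)-\psi_{j,k}(\mu))/(\lambda-\mu)$, and split $\Phi_{j,k}$ according to the positions of $\lambda,\mu$ relative to the interval $I_k$ of length $2^{-j}$ centred at $2^{-j}k$: a ``diagonal'' part supported in $I_k\times I_k$, and ``off-diagonal'' parts supported in $I_k\times I_{k+r}$ (and their transposes) for integers $r\neq 0$. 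Two facts are then combined. First, a symbol $\Phi$ supported in $I\times J$ only sees the corresponding spectral windows, $T_\Phi(X)=T_\Phi(E_A(I)XE_B(J))$, so $\|T_\Phi(X)\|_p\le\|T_\Phi\|_{\mathcal{L}_p\to\mathcal{L}_p}\,\|E_A(I)XE_B(J)\|_p$, and the quasi-Banach Schur multiplier norm of $\Phi_{j,k}$ restricted to a product of such intervals is estimated, through a factorisation $\Phi=\sum_i\alpha_i\otimes\beta_i$ with $\alpha_i,\beta_i$ supported in $I,J$, by $\|T_\Phi\|_{\mathcal{L}_p\to\mathcal{L}_p}^p\le\sum_i\|\alpha_i\|_\infty^p\|\beta_i\|_\infty^p$. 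Second — and this is the mechanism that produces the exponent $\tfrac{p}{1-p}$ — when a localised piece is summed over $k$, the windows $\{E_A(I_k)\}_k$ are mutually orthogonal and so are $\{E_B(I_{k+r})\}_k$; hence the summands have mutually orthogonal rows and columns, $\|\sum_k Y_k\|_p^p=\sum_k\|Y_k\|_p^p$, and applying Hölder's inequality with the conjugate exponents $\tfrac1{1-p}$ and $\tfrac1p$ together with the elementary bound
\[
 \sum_k\|P_kXQ_k\|_p\le\|X\|_p\qquad(0<p\le 1,\ \{P_k\},\ \{Q_k\}\ \text{mutually orthogonal projections})
\]
converts $\sum_k|c_{j,k}|^p\,\|E_A(I_k)(A-B)E_B(I_{k+r})\|_p^p$ into exactly $\|(c_{j,k})_k\|_{\ell_{p/(1-p)}}^p\,\|A-B\|_p^p$. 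For $p=1$ this is just Hölder with $\ell_\infty,\ell_1$ and the contractivity of diagonal truncation on $\mathcal{L}_1$, and the scheme collapses to Peller's $\dot{B}^1_{\infty,1}$ criterion.

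The hard part is the off-diagonal contributions. The divided difference $\Phi_{j,k}$ decays only like the reciprocal of the distance to the diagonal, so the Schur multiplier norm of the shell at distance $\sim 2^{-j}r$ is of size $\sim 2^j/|r|$, and $\sum_{r\neq 0}|r|^{-p}$ diverges for $p\le 1$: the far shells cannot be summed crudely. This is exactly the place where, for $p\ge 1$, one invokes Peller's operator-valued integral representation, which has no analogue in the quasi-Banach range. To recover summability the plan is to trade the polynomial decay for additional powers of $2^{-j}/\mathrm{dist}$ using the vanishing moments and rapid decay of $\psi$ — morally, replacing $\psi_{j,k}$ by a high-order primitive and integrating by parts at the level of the symbol, or equivalently expanding $1/(\lambda-\mu)$ in a Taylor polynomial in $\lambda$ about $2^{-j}k$ of order $m$ and absorbing the polynomial terms into the moment conditions — so that the shell at distance $\sim 2^{-j}r$ costs $\sim 2^j|r|^{-m}$; taking $m>1/p$ then makes both the $r$-sum and the bookkeeping over how many shells contain a given point of $\operatorname{spec}(B)$ convergent, with all constants depending only on $p$. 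Executing this step while keeping the orthogonality/Hölder structure of the previous paragraph intact is, I expect, the main obstacle. Granting it, the single-scale estimate follows, and raising it to the power $p$ and summing over $j$ gives
\[
 \|f(A)-f(B)\|_p^p\le|a|^p\|A-B\|_p^p+C_p^p\,\|A-B\|_p^p\sum_{j}2^{jp}\|(c_{j,k})_k\|_{\ell_{p/(1-p)}}^p\lesssim_p\big(\|f'\|_{L_\infty}+\|f\|_{\dot{B}^{1/p}_{p/(1-p),p}}\big)^p\|A-B\|_p^p,
\]
which is the assertion.
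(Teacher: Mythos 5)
Your global architecture (wavelet decomposition, reduction to a single-scale bound $\|g_j^{[1]}\|_{\mathrm{m}_p}\lesssim 2^j\|(c_{j,k})_k\|_{\ell_{p^\sharp}}$, then the $p$-triangle inequality over $j$) is exactly the paper's, and your mechanism for extracting the $\ell_{p^\sharp}$-norm of the coefficients from orthogonal spectral windows via H\"older with exponents $\frac{1}{1-p},\frac1p$ is sound and parallels the paper's \eqref{left_multiplication} and Lemma \ref{block_diagonal_formula} (your auxiliary bound $\sum_k\|P_kXQ_k\|_p\le\|X\|_p$ for $0<p\le1$ is correct, by Schmidt decomposition, the $p$-triangle inequality and Minkowski's inequality in $\ell_{1/p}$). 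The genuine gap is precisely the step you yourself flag and then "grant": the off-diagonal contribution. Your proposed repair --- gaining shell decay $2^j|r|^{-m}$ from the vanishing moments of $\psi$ by Taylor-expanding $1/(\lambda-\mu)$ and absorbing polynomial terms into moment conditions --- cannot work, because Schur-multiplier (double operator integral) norms are insensitive to oscillation: they depend only on the pointwise values of the symbol along arbitrary finite subsets of the spectra, not on Lebesgue integrals of $\psi_{j,k}$. Concretely, testing the shell piece supported on $I_k\times I_{k+r}$ against a rank-one $\xi\otimes\eta$ with $\xi$ concentrated at a point $\lambda_0\in I_k$ where $|\psi_{j,k}(\lambda_0)|\sim1$ shows its $\mathrm{m}_p$-norm really is of order $2^j|c_{j,k}|/|r|$, moments or not; and with only $1/|r|$ decay the shell-by-shell $p$-triangle sum $\sum_r|r|^{-p}$ diverges, as you noted. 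So the plan as written fails at its central point. (A smaller unexecuted step: your "factorisation $\Phi=\sum_i\alpha_i\otimes\beta_i$" for the near-diagonal part silently requires knowing that the divided difference of a single $C^\beta$ bump is an $\mathcal{L}_p$-bounded Schur multiplier; the paper proves this separately, Theorem \ref{Cbeta_sufficiency}, via Fourier series on $\mathbb{T}$ and the Cayley transform.)

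For comparison, the paper's resolution of the far region avoids shells altogether and uses no moment conditions (its Theorem \ref{quasi_wavelet_bernstein} holds for an arbitrary compactly supported $C^\beta$ bump, $\beta>\frac2p$): one splits the numerator, writing the far part as
\begin{equation*}
    g_j(t)\cdot\frac{\chi_{|\lfloor t\rfloor-\lfloor s\rfloor|>R}}{t-s}\;-\;g_j(s)\cdot\frac{\chi_{|\lfloor t\rfloor-\lfloor s\rfloor|>R}}{t-s},
\end{equation*}
treats the truncated kernel $\chi_{|\lfloor t\rfloor-\lfloor s\rfloor|>R}/(t-s)$ as a \emph{single} Toeplitz-type multiplier whose $\mathrm{m}_1$-norm is finite and in fact small, of order $R^{-1/2}$ (Lemma \ref{vanishing_toeplitz_lemma}, via Plancherel; a geometric series in $\{t\}-\{s\}$ handles the mismatch between $t-s$ and $\lfloor t\rfloor-\lfloor s\rfloor$, Lemma \ref{GH lemma}), and then brings in the coefficients through one application of the Schatten--H\"older inequality $\|\{\lambda_jA_{j,k}\}\|_{\mathrm{m}_p}\le\|\lambda\|_{\ell_{p^\sharp}}\|A\|_{\mathrm{m}_1}$, i.e.\ a diagonal $\ell_{p^\sharp}$-multiplication composed with an $\mathrm{m}_1$-bounded kernel. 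This "global tail $+$ H\"older against an $\mathrm{m}_1$ kernel" trade is the idea your outline is missing; if you replace your shell summation by it (keeping your orthogonality/H\"older treatment of the near-diagonal blocks, together with a proof that $\psi^{[1]}$ itself is $\mathrm{m}_p$-bounded), the argument closes and is essentially the published proof.
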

The assumption that $A$ and $B$ are bounded is made only for the convenience of exposition, and can very likely be removed. The constant $C_p$ does not depend on the operator norms of $A$
or $B$. Standard arguments show that Theorem \ref{main_sufficient_condition} also implies commutator estimates of the form
\begin{equation*}
    \|f(A)X-Xf(A)\|_{p} \leq C_p(\|f'\|_{\infty}+\|f\|_{\dot{B}^{\frac{1}{p}}_{\frac{p}{1-p},p}})\|AX-XA\|_p
\end{equation*}
and more generally quasi-commutator estimates of the form
\begin{equation*}
    \|f(A)X-Xf(B)\|_p \leq C_p(\|f'\|_\infty+\|f\|_{\dot{B}^{\frac{1}{p}}_{\frac{p}{1-p},p}})\|AX-XB\|_p.
\end{equation*}
See Section \ref{lp_lip_section} below for details.
    
In the case $p=1$, Theorem \ref{main_sufficient_condition} reduces to Peller's sufficient condition \cite[Theorem 2]{Peller-besov-1990}.
In his original proof, the function $f$ was decomposed into Littlewood-Paley components. However we have been unable to adapt these methods to $p<1$
due to the existence of non-$\mathcal{L}_p$-operator Lipschitz functions with compactly supported Fourier transform from Theorem \ref{periodic_failure}.

Our proof of Theorem \ref{main_sufficient_condition} is instead based on the decomposition of $f$ into wavelet series. Wavelet-based methods have had a considerable impact on harmonic analysis and approximation theory over the past three decades, however to our knowledge this is the first time that the wavelet decomposition has been applied in the study of operator Lipschitz functions. We note tangentially that wavelets were used by Peng in the related topic of integral multipliers \cite{Peng-wavelets-1993}, although otherwise this potentially very fruitful technique has yet to be exploited.

We do not discuss necessary conditions here, however a necessary condition for $f$ to be $\mathcal{L}_p$-operator Lipschitz for $0 < p \leq 1$ in terms of Hankel operators has been found by Peller, see \cite[Theorem 6]{Peller-besov-1990} for details.

A related theme is operator H\"older continuity, which has been studied extensively by Aleksandrov and Peller \cite{Aleksandrov-Peller-holder-2010,Aleksandrov-Peller-holder-zygmund-2010}.
In Sections \ref{holder_section} and \ref{weak_holder_section} we also prove a number of H\"older-type estimates, extending some of the results in \cite{Aleksandrov-Peller-holder-2010}.
For example, we prove that if $0 < \alpha <1$ and $0 < p \leq 1$ then for all $0 < \alpha<1,$ if $f$ is an $\alpha$-H\"older function belonging to $\dot{B}^{\alpha+\frac{1-p}{p}}_{\frac{p}{1-p},\infty}(\mathbb{R})$ we have
\begin{equation*}
    \|f(A)-f(B)\|_{\frac{p}{\alpha},\infty} \leq C_{p,\alpha}\|f\|_{\dot{B}^{\alpha+\frac{1-p}{p}}_{\frac{p}{1-p},\infty}(\mathbb{R})}\|A-B\|_{p}^{\alpha},\quad A,B\in \mathcal{B}_{\mathrm{sa}}(H),\; A-B\in \mathcal{L}_p.
\end{equation*}
Here, $\|\cdot\|_{\frac{p}{\alpha},\infty}$ is a weak Schatten quasi-norm.  This result extends \cite[Theorem 5.4]{Aleksandrov-Peller-holder-2010}, and coincides with that result for $p=1$. H\"older-type estimates of this nature
are related to those in \cite{HSZ2019,Ricard-2018}. In \cite{Ricard-2018} it was proved that for all $0 < \alpha < 1$ and $0 < p < \infty$ we have
\[
    \||A|^{\alpha}-|B|^{\alpha}\|_{\frac{p}{\alpha}} \lesssim \|A-B\|_p^{\alpha},\quad A,B \in \mathcal{B}_{\mathrm{sa}}(H),\; A-B \in \mathcal{L}_{p}.
\]
Since the function $f(t) = |t|^{\alpha}$ belongs to $\dot{B}^{\alpha+\frac{1-p}{p}}_{\frac{p}{1-p},\infty}(\mathbb{R})$, the results here imply a weaker estimate than \cite{Ricard-2018} but for a wider class of functions. We discuss this relationship
in more detail in Section \ref{weak_holder_section}.

We wish to extend our gratitude to Dmitriy Zanin for many helpful discussions and suggestions relating to this paper and to Jinghao Huang for his careful reading and comments. We also wish to express our gratitude to the two anonymous reviewers whose thoughtful comments improved this article.

\section{Preliminaries}
\subsection{Operator ideals and Schur multipliers}
We recall some details concerning Schatten ideals and Schur multipliers. Additional details on Schatten $\mathcal{L}_p$ spaces may be found in \cite{GohbergKrein,Simon-trace-ideals-2005}, and for further discussion of Schur multipliers see \cite{Birman-Solomyak-2003, Pisier-book-2001, SkripkaTomskova, Ricard-2018,Aleksandrov-Peller-hankel-and-toeplitz-2002,Aleksandrov-Peller-acb-2020}. Let $H$ be a Hilbert space. Denote by $\mathcal{B}(H)$ the algebra of all bounded linear operators on $H$, with operator norm denoted $\|\cdot\|_{\infty}$. Given a compact operator $T \in \mathcal{B}(H)$, denote by $\mu(T) = \{\mu(j,T)\}_{j=0}^\infty$ the sequence of singular values of $T$, which may be defined as
\begin{equation*}
    \mu(j,T) := \inf\{\|T-R\|_{\infty}\;:\; \mathrm{rank}(R)\leq j\}.
\end{equation*}
Equivalently, $\mu(T)$ is the sequence of eigenvalues of the absolute value $|T|$ arranged in non-increasing order with multiplicities.

For $0 < p < \infty$, denote by $\ell_p$ the space of $p$-summable sequences. The Schatten-von Neumann $\mathcal{L}_p$ space is the space of compact operators $T$ with singular value sequence in $\ell_p$. That is, $\mathcal{L}_p$
is the set of compact operators $T$ such that
\begin{equation*}
    \|T\|_p := \mathrm{Tr}(|A|^{p})^{\frac{1}{p}} = \left(\sum_{j=0}^\infty \mu(j,T)^p\right)^{1/p} = \|\mu(T)\|_{\ell_p} < \infty.
\end{equation*}
For $p \geq 1$, this defines a Banach norm on $\mathcal{L}_p$. For $0 < p < 1$, this is only a quasi-norm obeying the $p$-triangle inequality
\begin{equation*}
    \|T+S\|_p^p \leq \|T\|_p^p+\|S\|_p^p,\quad T,S \in \mathcal{L}_p.
\end{equation*}
See \cite[Proposition 6]{Kosaki-jfa-1984}, \cite{Rotfeld1968}.

We will also briefly refer to weak $L_p$-norms. For $p\in (0,\infty)$, the weak $L_p$-norm is defined by
\begin{equation}\label{weak_lp_def}
    \|T\|_{p,\infty} := \sup_{n\geq 0}\;(n+1)^{\frac{1}{p}}\mu(n,T).
\end{equation}

\subsection{Schur multipliers of $\mathcal{L}_p$}
Let $n\geq 1$. The Schur product of two matrices $A,B \in M_n(\mathbb{C})$ is defined as the entry-wise product
\begin{equation*}
    A\circ B = \{A_{j,k}B_{j,k}\}_{j,k=1}^n.
\end{equation*}
For $0 < p \leq 1$, the $\mathcal{L}_p$-bounded Schur multiplier norm of $A$ is defined as
\begin{equation*}
    \|A\|_{\mathrm{m}_p} := \sup_{\|B\|_p\leq 1} \|A\circ B\|_{p}.
\end{equation*}
Note that
\begin{equation}\label{algebra}
\|A\circ B\|_{\mathrm{m}_p} \leq \|A\|_{\mathrm{m}_p}\|B\|_{\mathrm{m}_p}.
\end{equation}
The $p$-subadditivity of the $\mathcal{L}_p$-quasi-norm readily implies that
\begin{equation}\label{p-prop}
    \|A+B\|_{\mathrm{m}_p}^p \leq \|A\|_{\mathrm{m}_p}^p+ \|B\|_{\mathrm{m}_p}^p.
\end{equation}

For $p \leq 1$, the $\mathrm{m}_p$-quasi-norm can be computed using rank one matrices. For $1\leq n\leq \infty$, we denote by $\ell_2^n$ either $\mathbb{C}^n$ if $n<\infty$ or $\ell_2(\mathbb{N})$ if $n=\infty$.
\begin{lemma}\label{rank_one_suffices}
    Let $A \in M_{n}(\mathbb{C})$ where $1\leq n\leq \infty$ and assume that $0 < p\leq 1$. Then
    \begin{equation*}
        \|A\|_{\mathrm{m}_p} := \sup_{\|\xi\|\leq 1,\, \|\eta\|\leq 1} \|A\circ (\xi\otimes \eta)\|_{p}.
    \end{equation*}
    The supremum here is over vectors $\xi, \eta$ in the unit ball of $\ell_2^n$, and $\xi\otimes \eta$ denotes the rank one matrix
    \begin{equation*}
        (\xi\otimes \eta)_{j,k} = \xi_j\eta_k,\quad 1\leq j,k\leq n
    \end{equation*}
    where $\xi_j$ and $\eta_k$ denote the $j$ and $k$th entries of $\xi$ and $\eta$ respectively.
\end{lemma}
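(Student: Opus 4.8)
The plan is to show that the supremum defining $\|A\|_{\mathrm{m}_p}$ is attained (as a supremum) already on the rank-one test matrices $\xi\otimes\eta$. One inequality is trivial: since $\|\xi\otimes\eta\|_p = \|\xi\|\,\|\eta\|$, every rank-one matrix with $\|\xi\|,\|\eta\|\le 1$ lies in the unit ball of $\mathcal{L}_p$, so
$$
\sup_{\|\xi\|\le 1,\,\|\eta\|\le 1}\|A\circ(\xi\otimes\eta)\|_p \le \|A\|_{\mathrm{m}_p}.
$$
The content is the reverse inequality. First I would reduce to finite $n$ by a routine truncation/approximation argument (compress $A$ and $B$ to the top-left $N\times N$ corner and let $N\to\infty$, using that the $\mathrm{m}_p$-norm of the compression is monotone and that $\|\cdot\|_p$ is continuous under such compressions on $\mathcal{L}_p$), so assume $n<\infty$.

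The key step is to exploit the $p$-triangle inequality. Given any $B\in M_n(\mathbb{C})$ with $\|B\|_p\le 1$, write its Schmidt (singular value) decomposition $B=\sum_{m} s_m\, u_m\otimes \overline{v_m}$ with $\{u_m\}$ and $\{v_m\}$ orthonormal and $\sum_m s_m^p = \|B\|_p^p \le 1$. Then $B = \sum_m s_m\, (u_m\otimes\overline{v_m})$ is a decomposition into rank-one matrices, and the Schur product is linear in $B$, so
$$
A\circ B = \sum_m s_m\, \bigl(A\circ(u_m\otimes\overline{v_m})\bigr).
$$
Applying the $p$-triangle inequality for $\|\cdot\|_p$ (valid precisely because $0<p\le 1$) gives
$$
\|A\circ B\|_p^p \le \sum_m s_m^p\,\bigl\|A\circ(u_m\otimes\overline{v_m})\bigr\|_p^p
\le \Bigl(\sup_{\|\xi\|\le 1,\,\|\eta\|\le 1}\|A\circ(\xi\otimes\eta)\|_p^p\Bigr)\sum_m s_m^p,
$$
and since $\sum_m s_m^p\le 1$ (as $u_m,v_m$ are unit vectors, each rank-one term is an admissible test matrix) we conclude $\|A\circ B\|_p \le \sup_{\|\xi\|,\|\eta\|\le 1}\|A\circ(\xi\otimes\eta)\|_p$. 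Taking the supremum over all such $B$ yields $\|A\|_{\mathrm{m}_p}\le \sup_{\|\xi\|,\|\eta\|\le 1}\|A\circ(\xi\otimes\eta)\|_p$, completing the proof.

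The only real subtlety — and the step I would be most careful about — is that this argument uses the $p$-triangle inequality \emph{in an essential way}, so it is genuinely restricted to $p\le 1$; for $p>1$ decomposing $B$ into rank-ones and summing the norms is hopeless, which is why the statement is phrased for $0<p\le 1$. The finite-dimensional reduction is purely technical but should be stated carefully so that the supremum over $\ell_2(\mathbb{N})$ in the $n=\infty$ case is correctly recovered as a limit of the finite-rank truncations; alternatively one can work directly in $\ell_2(\mathbb{N})$ using the fact that any $B\in\mathcal{L}_p$ has an $\ell_p$-convergent Schmidt expansion and that the partial sums converge to $B$ in $\mathcal{L}_p$, so the same estimate passes to the limit.
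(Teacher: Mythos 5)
Your proposal is correct and follows essentially the same route as the paper: the trivial inequality from $\|\xi\otimes\eta\|_p=\|\xi\|\,\|\eta\|$, then the Schmidt decomposition of $B$ combined with the $p$-triangle inequality to bound $\|A\circ B\|_p^p$ by $\|B\|_p^p$ times the rank-one supremum. The finite-dimensional reduction you sketch is unnecessary (the Schmidt expansion argument works directly for $n=\infty$, as you note in your alternative), but it does no harm.
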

\begin{proof}
    The matrix $\xi\otimes \eta$ is proportional to a rank one projection with constant equal to $\|\xi\|\|\eta\|$. Therefore,
    \begin{equation*}
        \|\xi\otimes \eta\|_p = \|\xi\|\|\eta\|\leq 1.
    \end{equation*} 
    It follows that
    \begin{equation*}
        \sup_{\|\xi\|,\|\eta\|\leq 1} \|A\circ (\xi\otimes \eta)\|_p \leq \|A\|_{\mathrm{m}_p}.
    \end{equation*}
    Using the Schmidt decomposition, if $B \in M_{n}(\mathbb{C})$ then there exist sequences $\{\xi_j\}_{j=0}^{n-1}$, $\{\eta_j\}_{j=0}^{n-1}$
    of unit vectors in $\ell_2^n$ such that 
    \begin{equation*}
        B = \sum_{j=0}^{n-1} \mu(j,B)\xi_j\otimes \eta_j.
    \end{equation*}
    By the $p$-subadditivity of the $\mathcal{L}_p$-quasi-norm, we have
    \begin{equation*}
        \|A\circ B\|_p^p \leq \sum_{j=0}^{n-1} \mu(j,B)^p\|A\circ (\xi_j\otimes \eta_j)\|_p^p \leq \left(\sum_{j=0}^{n-1} \mu(j,B)^p\right)\sup_{\|\xi\|,\|\eta\|\leq 1} \|A\circ (\xi\otimes \eta)\|_p^p.
    \end{equation*} 
    By definition, $\sum_{j=0}^{n-1} \mu(j,B)^p = \|B\|_p^p$. Therefore
    \begin{equation*}
        \|A\|_{\mathrm{m}_p} \leq \sup_{\|\xi\|,\|\eta\|\leq 1} \|A\circ (\xi\otimes \eta)\|_p.
    \end{equation*}
\end{proof}

A property of the $\mathrm{m}_p$-Schur norm that we shall use frequently is the following:
\begin{lemma}
    Let $A\in M_n(\mathbb{C})$, where $1\leq n\leq \infty$. If $A'$ is a submatrix of $A$, then
    \begin{equation*}
        \|A'\|_{\mathrm{m}_p}\leq \|A\|_{\mathrm{m}_p}.
    \end{equation*}
\end{lemma}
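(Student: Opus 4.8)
The plan is to realize the passage from $A$ to its submatrix $A'$ as a zero-padding (equivalently, a compression by coordinate projections), under which the Schatten $p$-quasi-norm is unchanged. Write $A'$ as the submatrix of $A$ obtained by keeping the rows indexed by a subset $I$ and the columns indexed by a subset $J$ of $\{1,\dots,n\}$. We may assume $I$ and $J$ are initial segments: a permutation of rows or columns is implemented by multiplication by permutation matrices, which are unitary, and the Schatten quasi-norms are unitarily invariant, so this reordering changes nothing.

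Given any matrix $B'$ of size $|I|\times|J|$, pad it by zeros to obtain a matrix $B\in M_n(\mathbb{C})$ supported on the block $I\times J$. Then $\|B\|_p=\|B'\|_p$, since appending zero rows and columns does not alter the singular value sequence, and $A\circ B$ is exactly the zero-padding of $A'\circ B'$, so likewise $\|A'\circ B'\|_p=\|A\circ B\|_p$. Therefore $\|A'\circ B'\|_p=\|A\circ B\|_p\le\|A\|_{\mathrm{m}_p}\|B\|_p=\|A\|_{\mathrm{m}_p}\|B'\|_p$, and taking the supremum over $B'$ gives $\|A'\|_{\mathrm{m}_p}\le\|A\|_{\mathrm{m}_p}$. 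If one prefers to work with the rank-one characterisation of Lemma \ref{rank_one_suffices}, the same idea applies verbatim: extend unit vectors $\xi\in\ell_2^{|I|}$, $\eta\in\ell_2^{|J|}$ by zero to $\tilde\xi,\tilde\eta\in\ell_2^n$ supported on $I$ and $J$, note $\|\tilde\xi\|=\|\xi\|\le 1$ and $\|\tilde\eta\|=\|\eta\|\le 1$, and observe that $A\circ(\tilde\xi\otimes\tilde\eta)$ is the zero-padding of $A'\circ(\xi\otimes\eta)$.

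There is essentially no obstacle here; the argument is uniform over $0<p\le 1$ and valid for $n=\infty$ as well, the only points requiring minor care being the reduction to initial segments via unitary invariance and, when $n=\infty$, the observation that the index sets $I,J$ may be infinite without affecting the zero-padding identification.
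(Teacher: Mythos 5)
Your zero-padding argument is correct and complete: the Schur product with the padded test matrix is exactly the padded version of $A'\circ B'$, Schatten quasi-norms are unaffected by adjoining zero rows and columns, and taking the supremum gives $\|A'\|_{\mathrm{m}_p}\leq\|A\|_{\mathrm{m}_p}$ uniformly in $0<p\leq 1$ and for $n=\infty$. The paper states this lemma without proof, and your argument is precisely the standard one it implicitly relies on (the reduction to initial segments is not even needed, since one may work with the index sets $I,J$ directly), so there is nothing to correct.
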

It follows that adding rows or columns to a matrix cannot decrease the $\mathrm{m}_p$-norm.

Following the notation of Aleksandrov and Peller \cite{Aleksandrov-Peller-hankel-and-toeplitz-2002}, for $0 < p \leq 1$ a conjugate index $p^\sharp$
is defined by
\begin{equation*}
    p^\sharp := \begin{cases} 
                    \frac{p}{1-p},\quad p < 1,\\
                    \infty,\quad p=1.
                \end{cases}
\end{equation*}
That is, $p^{\sharp}$ is the unique element of $(0,\infty]$ such that
\begin{equation*}
    \frac{1}{p} = \frac{1}{p^{\sharp}}+1.
\end{equation*}
As an application of the H\"older inequality for Schatten ideals \cite[Property 2, page 92]{GohbergKrein}, \cite[Lemma 1]{Kosaki-jfa-1984}, it follows that
\begin{equation}\label{operator_holder_inequality}
    \|AB\|_p \leq \|A\|_{p^{\sharp}}\|B\|_1,\quad 0 < p \leq 1.
\end{equation}
Therefore, Lemma \ref{rank_one_suffices} implies that
\begin{equation}\label{psharp_upper_bound}
    \|A\|_{\mathrm{m}_p} = \sup_{\|\xi\|,\|\eta\|\leq 1} \|A\circ (\xi\otimes \eta)\|_p \leq \|A\|_{p^{\sharp}}.
\end{equation}
(c.f. \cite[Theorem 3.1]{Aleksandrov-Peller-hankel-and-toeplitz-2002}). 
There also holds the H\"older inequality for sequences
\begin{equation}\label{sequential_holder}
    \|xy\|_{\ell_p} \leq \|x\|_{\ell_{p^\sharp}}\|y\|_{\ell_1},\quad x \in \ell_{p^\sharp},\, y \in \ell_1.
\end{equation}

The next lemma is a very slight modification of \cite[Theorem 3.2]{Aleksandrov-Peller-hankel-and-toeplitz-2002}.
For $1\leq n < \infty$, denote by $\{e_j\}_{j=0}^{n-1}$ the canonical basis of the $n$-dimensional vector space $\ell_2^n$. 
The matrix basis of $M_n(\mathbb{C})$ shall be denoted $\{e_j\otimes e_k\}_{j,k=0}^{n-1}$. A matrix $X \in M_n(\mathbb{C})$ is said to be diagonal
with respect to $\{e_j\otimes e_k\}_{j,k=0}^{n-1}$ when $\langle e_j,Xe_k\rangle = 0$ for $j\neq k$.

\begin{lemma}\label{block_diagonal_formula}
    Let $A\in M_n(\mathbb{C})$, $1\leq n\leq \infty$ be a matrix having a generalised block diagonal structure in the following sense:
    There exist pairwise orthogonal projections $\{p_j\}_{j=1}^N$ and $\{q_j\}_{j=1}^N$, diagonal with respect to the matrix basis $\{e_j\otimes e_k\}_{j,k=1}^n$
    such that $\sum_{j=1}^N q_j = \sum_{j=1}^N p_j = 1$ and
    \begin{equation*}
        A = \sum_{j=1}^N p_jAq_j.
    \end{equation*}
    It follows that for $0 < p < 1$ we have
    \begin{equation*}
        \|A\|_{\mathrm{m}_p} \leq \left(\sum_{j=1}^N \|p_jAq_j\|_{\mathrm{m}_p}^{p^\sharp}\right)^{\frac{1}{p^\sharp}}.
    \end{equation*}
    and
    \begin{equation*}
        \|A\|_{\mathrm{m}_1} \leq \max_{1\leq j\leq N} \|p_jAq_j\|_{\mathrm{m}_1}.
    \end{equation*} 
\end{lemma}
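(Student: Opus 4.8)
The plan is to reduce everything to rank one Schur products via Lemma \ref{rank_one_suffices}, exploit that the generalised block structure of $A$ is preserved under Schur multiplication, and then close with a H\"older inequality in the block index whose exponents are forced by the relation $\frac1p = \frac1{p^\sharp}+1$.

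First I would fix unit vectors $\xi,\eta\in\ell_2^n$ and set $\xi_j := p_j\xi$ and $\eta_j := q_j\eta$ for $1\le j\le N$. Since the $p_j$ are pairwise orthogonal with $\sum_j p_j = 1$, we have $\sum_j \|\xi_j\|^2 = \|\xi\|^2 \le 1$ and, similarly, $\sum_j \|\eta_j\|^2 \le 1$. Because each $p_j$ and $q_j$ is diagonal with respect to the matrix basis, a short computation with matrix entries gives $(p_jAq_j)\circ(\xi\otimes\eta) = p_j\big(A\circ(\xi\otimes\eta)\big)q_j = (p_jAq_j)\circ(\xi_j\otimes\eta_j)$, so that, writing $T_j := (p_jAq_j)\circ(\xi_j\otimes\eta_j)$,
$$
A\circ(\xi\otimes\eta) = \sum_{j=1}^N (p_jAq_j)\circ(\xi\otimes\eta) = \sum_{j=1}^N T_j .
$$
Each $T_j$ satisfies $T_j = p_jT_jq_j$, hence $\mathrm{range}(T_j)\subseteq\mathrm{range}(p_j)$ and $\mathrm{range}(T_j^*)\subseteq\mathrm{range}(q_j)$; as the $p_j$ are mutually orthogonal and so are the $q_j$, the ranges of the $T_j$ are mutually orthogonal, and likewise the ranges of the $T_j^*$. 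The standard description of the singular values of such a ``block diagonal'' sum then shows that $\mu\big(\sum_j T_j\big)$ is the non-increasing rearrangement of $\bigsqcup_j\mu(T_j)$, and therefore $\|A\circ(\xi\otimes\eta)\|_p^p = \sum_{j=1}^N \|T_j\|_p^p$ for every $0<p\le 1$.

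Next, from the definition of $\|\cdot\|_{\mathrm{m}_p}$ and the identity $\|\xi_j\otimes\eta_j\|_p = \|\xi_j\|\,\|\eta_j\|$ (established in the proof of Lemma \ref{rank_one_suffices}), each summand obeys $\|T_j\|_p \le \|p_jAq_j\|_{\mathrm{m}_p}\|\xi_j\|\,\|\eta_j\|$, whence
$$
\|A\circ(\xi\otimes\eta)\|_p^p \le \sum_{j=1}^N \|\xi_j\|^p\|\eta_j\|^p\,\|p_jAq_j\|_{\mathrm{m}_p}^p .
$$
For $0<p<1$ I would apply H\"older's inequality in $j$ with conjugate exponents $r = \frac{p^\sharp}{p} = \frac{1}{1-p}$ and $r' = \frac1p$: the $\ell_{r'}$-factor equals $\big(\sum_j \|\xi_j\|\,\|\eta_j\|\big)^p \le 1$ by Cauchy--Schwarz together with the normalisations above, while the $\ell_r$-factor equals $\big(\sum_j \|p_jAq_j\|_{\mathrm{m}_p}^{p^\sharp}\big)^{p/p^\sharp}$ since $pr = p^\sharp$. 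Taking $p$-th roots and then the supremum over $\xi,\eta$ via Lemma \ref{rank_one_suffices} gives the first inequality. The case $p=1$ is the limiting instance $r=\infty$, $r'=1$, in which $\sum_j \|T_j\|_1 \le \big(\max_j \|p_jAq_j\|_{\mathrm{m}_1}\big)\sum_j\|\xi_j\|\,\|\eta_j\| \le \max_j\|p_jAq_j\|_{\mathrm{m}_1}$, again by Cauchy--Schwarz, and Lemma \ref{rank_one_suffices} concludes.

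The only genuinely delicate point I anticipate is the singular-value identity for the block diagonal sum $\sum_j T_j$ when $n=\infty$ (so that $N$ may be infinite): one must check that the series converges in $\mathcal{L}_p$ and that the rearrangement is valid at that level of generality, though this is routine given the orthogonality of the ranges and co-ranges. One should also dispose of the harmless case in which some $\xi_j$ or $\eta_j$ vanishes. Everything else is a direct application of the machinery already set up, with the choice of H\"older exponents dictated by $\frac1p = \frac1{p^\sharp} + 1$.
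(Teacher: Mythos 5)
Your proof is correct. The paper itself does not prove this lemma --- it is stated as a slight modification of \cite[Theorem 3.2]{Aleksandrov-Peller-hankel-and-toeplitz-2002} and the proof is deferred to that reference --- so your argument serves as a complete self-contained substitute, and it follows essentially the route behind the cited result: reduce to rank-one test matrices via Lemma \ref{rank_one_suffices}, use the diagonality of the $p_j,q_j$ to get $(p_jAq_j)\circ(\xi\otimes\eta)=(p_jAq_j)\circ(\xi_j\otimes\eta_j)$ with mutually orthogonal ranges and co-ranges so that the Schatten quasi-norm of the sum splits exactly, and then apply H\"older in the block index with exponents $\frac{1}{1-p}$ and $\frac1p$ together with Cauchy--Schwarz on $\sum_j\|\xi_j\|\|\eta_j\|$; the $p=1$ case is the limiting instance, exactly as you say. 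All the individual steps check out (the entrywise identity needs the projections to be diagonal, which is hypothesised; the orthogonality argument gives $\big\|\sum_j T_j\big\|_p^p=\sum_j\|T_j\|_p^p$ since the cross terms $T_jT_k^*$ and $T_j^*T_k$ vanish; and since $N$ is finite in the statement, the convergence caveat you raise for the infinite-dimensional case does not actually arise).
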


We also define the $\mathrm{m}_p$-Schur norm for matrices indexed by arbitrary, possibly infinite and uncountable sets.
\begin{definition}
    If $A = \{A_{t,s}\}_{t,s\in T\times S}$ is an infinite matrix indexed by sets $T$ and $S$, we define
    \begin{equation*}
        \|A\|_{\mathrm{m}_p} := \sup_{|T_0|< \infty\,|S_0|< \infty} \|\{A_{t,s}\}_{t,s\in T_0\times S_0}\|_{\mathrm{m}_p}.
    \end{equation*}
    That is, the $\mathrm{m}_p$-norm of an infinite matrix is defined as the supremum of the $\mathrm{m}_p$-norms of all finite submatrices.
    If $\|A\|_{\mathrm{m}_p} < \infty$, then the matrix $A$ is said to be an $\mathcal{L}_p$-bounded Schur multiplier.
\end{definition}   

The analogy of Lemma \ref{block_diagonal_formula} holds for matrices indexed by arbitrary sets, and also
note that the analogy of \eqref{p-prop} holds. That is,
\begin{equation*}
    \|A+B\|_{\mathrm{m}_p}^p \leq \|A\|_{\mathrm{m}_p}^p +\|B\|_{\mathrm{m}_p}^p
\end{equation*}
whenever $A$ and $B$ are matrices indexed by the same sets.

\subsection{Besov spaces}\label{besov_section}
Denote by $\mathcal{S}(\mathbb{R})$ the algebra of all Schwartz class functions on $\mathbb{R}$, with its canonical Fr\'echet topology, and denote by $\mathcal{S}'(\mathbb{R})$ its topological
dual, the space of tempered distributions. Let $\Phi$ be a smooth function on $\mathbb{R}$ supported in the set
\begin{equation*}
    [-2,-1+\frac{1}{7})\cup (1-\frac{1}{7},2],
\end{equation*}
and identically equal to $1$ in the set $[-2+\frac{2}{7},-1)\cup (1,2-\frac{2}{7}].$
We assume that
\begin{equation*}
    \sum_{n\in \mathbb{Z}} \Phi(2^{-n}\xi) = 1\quad \xi\neq 0.
\end{equation*}

We will use a homogeneous Littlewood-Paley decomposition $\{\Delta_n\}_{n\in \mathbb{Z}}$ where
$\Delta_n$ is the operator on $\mathcal{S}'(\mathbb{R})$ of Fourier multiplication by the function $\xi\mapsto \Phi(2^{-n}\xi)$. 

For $s \in \mathbb{R}$ and $p,q\in (0,\infty]$ we consider the homogeneous Besov space $\dot{B}^s_{p,q}(\mathbb{R})$. We refer to \cite{Sawano2018,Triebel-1} for comprehensive accounts of the theory of Besov spaces. 
In terms of the Littlewood-Paley decomposition $\{\Delta_j\}_{j\in \mathbb{Z}}$, a distribution $f \in \mathcal{S}'(\mathbb{R})$ is said to belong to the homogeneous Besov space
$\dot{B}^s_{p,q}(\mathbb{R})$, where $s \in \mathbb{R}$ and $p,q\in (0,\infty]$ if
\begin{equation}\label{besov_seminorm_def}
    \|f\|_{\dot{B}^s_{p,q}} := \|\{2^{js}\|\Delta_j f\|_p\}_{j\in \mathbb{Z}}\|_{\ell_q(\mathbb{Z})} < \infty.
\end{equation}
This definition follows \cite[Section 2.4]{Sawano2018}, \cite[Section 2.2.1]{Grafakos-2}, \cite[Section 5.1.3]{Triebel-1}.
This is only a seminorm, and $\|f\|_{\dot{B}^s_{p,q}} = 0$ for all polynomials $f$. The homogeneous Besov space is distinguished from the inhomogeneous Besov space $B^s_{p,q}(\mathbb{R})$,
which will not play a role in the present paper. 

Note that if $f \in \dot{B}^s_{p,q}(\mathbb{R})$ it will not necessarily be the case that there is an equality of distributions
\begin{equation*}
    f = \sum_{n\in \mathbb{Z}} \Delta_n f.
\end{equation*}
For example, if $f$ is a polynomial then the above right hand side is zero. However, there is an equality $f = \sum_{n\in \mathbb{Z}}\Delta_n f$
in the space of distributions modulo polynomials of degree at most $L>s-\frac{1}{p}$, see \cite[Theorem 2.31]{Sawano2018}.

In \cite{Peller-besov-1990, Aleksandrov-Peller-survey}, a slight modification of the definition of $\dot{B}^1_{\infty,1}(\mathbb{R})$
was made, and therefore in order to properly compare our results we explain how our present conventions align with those in \cite{Aleksandrov-Peller-survey}.

Say that that a distribution $f$ belongs to the modified homogeneous Besov space $\dot{B}^1_{\infty,1,\mathrm{mod}}(\mathbb{R})$ if $f \in \dot{B}^1_{\infty,1}(\mathbb{R})$
and the derivative $f'$ is expressed as
\begin{equation*}
    f' = \sum_{n\in \mathbb{Z}} (\Delta_nf)'
\end{equation*}
where the series converges in the sense of distributions. 

We now recall the well-known relation between $\dot{B}^1_{\infty,1,\mathrm{mod}}(\mathbb{R})$ and $\dot{B}^1_{\infty,1}(\mathbb{R})$. Recall that if $f$ is Lipschitz continuous, then $f$
is almost everywhere differentiable and $f'\in L_\infty(\mathbb{R})$ with $\|f'\|_{\infty}\leq \|f\|_{\mathrm{Lip}(\mathbb{R})}$ where $\|\cdot\|_{\mathrm{Lip}(\mathbb{R})}$
is the Lipschitz seminorm \cite[Subsection 3.1.6]{Federer-1969}. The reverse implication holds under the assumption that $f$ is absolutely continuous \cite[Corollary 2.9.20]{Federer-1969}.
\begin{lemma}
    Suppose that $f$ belongs to the modified homogeneous Besov space $\dot{B}^1_{\infty,1,\mathrm{mod}}(\mathbb{R})$. Then $f$ is Lipschitz continuous.
    
    Conversely, if $f$ is a Lipschitz function belonging to $\dot{B}^1_{\infty,1}(\mathbb{R})$, then there exists a constant $c$ with $|c|\leq \|f'\|_\infty +\|f\|_{\dot{B}^1_{\infty,1}}$ such that $f(t)-ct\in \dot{B}^1_{\infty,1, \mathrm{mod}}(\mathbb{R})$.
\end{lemma}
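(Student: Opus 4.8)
The plan is to verify the two implications separately, in each case reducing everything to statements about the derivative $f'$ and using that for Lipschitz functions $\|f'\|_\infty \le \|f\|_{\mathrm{Lip}}$, with the converse valid for absolutely continuous $f$. For the first implication, suppose $f \in \dot{B}^1_{\infty,1,\mathrm{mod}}(\mathbb{R})$, so by definition $f'=\sum_{n\in\mathbb{Z}}(\Delta_n f)'$ with convergence in $\mathcal{S}'(\mathbb{R})$. First I would estimate $\|(\Delta_n f)'\|_\infty$. Since $(\Delta_n f)' = \Delta_n(f')$ up to the Fourier multiplier $i\xi$, and the symbol $i\xi\Phi(2^{-n}\xi)$ is supported in an annulus of size $\sim 2^n$, a Bernstein-type inequality gives $\|(\Delta_n f)'\|_\infty \lesssim 2^n\|\Delta_n f\|_\infty$, so $\sum_n \|(\Delta_n f)'\|_\infty \lesssim \sum_n 2^n\|\Delta_n f\|_\infty = \|f\|_{\dot{B}^1_{\infty,1}} < \infty$. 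Hence the series defining $f'$ converges absolutely in $L_\infty(\mathbb{R})$, so $f'\in L_\infty$, $f$ is (after adjusting by a constant, which is irrelevant for the Lipschitz property) absolutely continuous, and therefore Lipschitz with $\|f\|_{\mathrm{Lip}} = \|f'\|_\infty \lesssim \|f\|_{\dot{B}^1_{\infty,1}}$.

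For the converse, let $f$ be Lipschitz with $f\in\dot{B}^1_{\infty,1}(\mathbb{R})$. The point is that a priori the distributional series $\sum_n \Delta_n f$ only reconstructs $f$ modulo polynomials of degree $\le L$ for some $L>0$ (here $s-\frac1p = 1$, so $L\ge 1$ suffices, i.e. polynomials of degree $\le 1$). So there is a polynomial $P$ of degree at most one with $f = P + \sum_n\Delta_n f$ as distributions. Writing $P(t) = ct + d$, I would set $g(t) := f(t) - ct$; then $g = d + \sum_n\Delta_n f = d + \sum_n \Delta_n g$ (the Littlewood-Paley pieces annihilate the polynomial $P$), and the series $\sum_n\Delta_n g$ converges in $\mathcal{S}'$. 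Differentiating, $g' = \sum_n(\Delta_n g)'$ with convergence in $\mathcal{S}'$ — one must check the series differentiation is legitimate, which follows because the frequency-localized pieces live in well-separated annuli and, by the Bernstein estimate above together with $\|f\|_{\dot{B}^1_{\infty,1}}<\infty$, the derivative series converges in $L_\infty$ hence in $\mathcal{S}'$. This is precisely the statement $g \in \dot{B}^1_{\infty,1,\mathrm{mod}}(\mathbb{R})$, so $f(t) - ct \in \dot{B}^1_{\infty,1,\mathrm{mod}}(\mathbb{R})$ as required.

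It remains to bound $|c|$. Since $f$ is Lipschitz, $f'\in L_\infty$ with $\|f'\|_\infty\le\|f\|_{\mathrm{Lip}}$, and differentiating the identity $f = ct + d + \sum_n\Delta_n f$ in $\mathcal{S}'$ gives $f' = c + \sum_n(\Delta_n f)'$. Taking any suitable average or pairing with an appropriate test function that sees the constant part but annihilates the high- and low-frequency oscillatory pieces, or more simply estimating in $L_\infty$: $|c| = \|c\|_\infty \le \|f'\|_\infty + \|\sum_n(\Delta_n f)'\|_\infty \le \|f'\|_\infty + \sum_n\|(\Delta_n f)'\|_\infty \le \|f'\|_\infty + C\|f\|_{\dot{B}^1_{\infty,1}}$. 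Absorbing the constant $C$ (or re-deriving the Bernstein inequality with the sharp constant coming from the chosen normalization of $\Phi$) yields $|c| \le \|f'\|_\infty + \|f\|_{\dot{B}^1_{\infty,1}}$.

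The main obstacle I anticipate is the bookkeeping around "distributions modulo polynomials": one must be careful that the polynomial ambiguity in $f = \sum_n\Delta_n f$ is exactly degree $\le 1$, that subtracting $ct$ legitimately lands $g$ in the modified class (i.e. that the derivative series converges in $\mathcal{S}'$ and not merely modulo constants), and that the constant $c$ extracted this way is the same $c$ appearing in the seminorm bound. The Bernstein inequality $\|(\Delta_n f)'\|_\infty\lesssim 2^n\|\Delta_n f\|_\infty$ is standard but its constant interacts with the claimed sharp bound $|c|\le\|f'\|_\infty+\|f\|_{\dot{B}^1_{\infty,1}}$, so some care (or an appeal to the standard normalization in \cite{Aleksandrov-Peller-survey}) is needed there; this is really a matter of matching conventions rather than a genuine difficulty.
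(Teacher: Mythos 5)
The paper never proves this lemma (it is recorded as a well-known fact), so the only internal benchmark is Lemma \ref{besov_realisation}, whose wavelet argument your proposal mirrors almost exactly: Bernstein plus summability of $2^n\|\Delta_n f\|_\infty$ gives uniform convergence of the differentiated series, and the leftover is a polynomial which boundedness forces to be affine. Your first implication is correct as written. In the converse, however, two steps are asserted rather than proved, and one of them cannot be closed along the route you chose.

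First, the polynomial ambiguity. The convergence statement recorded in Section \ref{besov_section} gives $f=\sum_n\Delta_n f$ modulo polynomials of degree at most $L$ with $L>s-\frac1p=1$, so a priori the corrector may be quadratic; you simply write $P(t)=ct+d$. The clean repair (and the one that also legitimises the term-by-term differentiation you flag as delicate) is the mechanism of Lemma \ref{besov_realisation}: choose polynomials $P_N$ with $\sum_{|n|\le N}\Delta_n f+P_N\to f$ in $\mathcal{S}'$, differentiate, and note that $P_N'$ then converges to $f'-\sum_n(\Delta_n f)'$, which is simultaneously a polynomial and bounded (both $f'$ and the uniformly convergent series are in $L_\infty$), hence a constant $c$; no a priori degree bound is needed. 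Second, the constant in $|c|\le\|f'\|_\infty+\|f\|_{\dot B^1_{\infty,1}}$. Your main estimate $|c|\le\|f'\|_\infty+\sum_n\|(\Delta_n f)'\|_\infty$ cannot give it: with the paper's normalisation, $\Delta_n f$ has spectrum in $|\xi|\le 2^{n+1}$, so the sharp Bernstein constant is $2^{n+1}$ and you only get $\|f'\|_\infty+2\|f\|_{\dot B^1_{\infty,1}}$; ``re-deriving Bernstein with the sharp constant'' does not remove the factor $2$. The averaging idea you mention in passing does work and should be the actual argument: writing $c=f'-h$ with $h=\sum_n(\Delta_n f)'$, one has $c=\frac1{2T}\int_{-T}^T f'-\frac1{2T}\int_{-T}^T h$, where $\bigl|\frac1{2T}\int_{-T}^T f'\bigr|\le\|f'\|_\infty$ and $\bigl|\frac1{2T}\int_{-T}^T(\Delta_n f)'\bigr|\le\min\bigl(C2^n\|\Delta_n f\|_\infty,\;T^{-1}\|\Delta_n f\|_\infty\bigr)$, so by dominated convergence the second average tends to $0$ as $T\to\infty$; this yields $|c|\le\|f'\|_\infty$, which is stronger than the stated bound. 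With these two repairs your argument is complete.
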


\subsection{$\mathcal{L}_p$-operator Lipschitz functions}\label{lp_lip_section}
Let $f$ be a Lipschitz function on $\mathbb{R}$, and let $0 < p \leq \infty$. The following assertions are equivalent:
\begin{enumerate}[{\rm (i)}]
    \item{}\label{lipschitz_definition} There is a constant $c_f$ such that $\|f(A)-f(B)\|_p\leq c_f\|A-B\|_{p}$ for all bounded self-adjoint $A$ and $B$ with $A-B \in \mathcal{L}_p$,
    \item{}\label{commutator_lipschitz} There is a constant $c_f'$ such that $\|[f(A),X]\|_{p} \leq c_f'\|[A,X]\|_{p}$ for all bounded self-adjoint $A$ and bounded $X$ with $[A,X] \in \mathcal{L}_p$,
    \item{}\label{quasicommutator_lipschitz} There is a constant $c_f''$ such that $\|f(A)X-Xf(B)\|_p \leq c_f''\|AX-XB\|_p$ for all bounded self-adjoint $A,B$ and bounded $X$ with $AX-XB \in \mathcal{L}_p$,
    \item{}\label{schur_boundedness} The matrix of divided differences $\{f^{[1]}(t,s)\}_{t,s\in \mathbb{R}}$, where $f^{[1]}$ is defined as
    \begin{equation*}
        f^{[1]}(t,s) := \frac{f(t)-f(s)}{t-s},\quad t\neq s\in \mathbb{R}.
    \end{equation*}
    is a Schur multiplier of $\mathcal{L}_p$ in the sense that
    \begin{equation*}
        \sup_{\lambda,\mu} \|\{f^{[1]}(\lambda_j,\mu_k)\}_{j,k=0}^n\|_{\mathrm{m}_p} < \infty
    \end{equation*}
    where the supremum ranges over all \emph{disjoint} sequences $\lambda,\mu\subset \mathbb{R}$ and all $n\geq 1$.
\end{enumerate}
Note that the constants in each case might differ. The Schur multiplier condition in \eqref{schur_boundedness} is implied by the formally stronger assertion that $\|f^{[1]}\|_{\mathrm{m}_p}<\infty$.
For $p\geq 1$, this result has been proved in different contexts and at varying levels of generality in several places \cite[Theorem 10.1]{Aleksandrov-Peller-holder-zygmund-2010}, \cite[Corollary 5.6]{Kissin-Shulman-2005}, \cite[Theorem 3.1.1]{Aleksandrov-Peller-survey}, \cite[Theorem 3.4]{DDdPS-jfa-1997}, \cite[Lemma 2.4]{RicardArchBasel2015}.

While this fact is well-established when $\mathcal{L}_p$ is a Banach space; we are not aware of any published proof of the precisely the same assertions when $\|\cdot\|_{p}$ is merely a quasi-norm although we note that closely related statements have appeared in \cite[Section 7]{HSZ2019} and \cite{RicardArchBasel2015}. Nonetheless, the results when $p<1$ may be proved in the same way, following without any changes the proofs in \cite{DDdPS-jfa-1997}. Therefore we only state without proof the relevant implications.

The condition \eqref{schur_boundedness} may seem unfamiliar, since we only require that $\|\{f^{[1]}(\lambda_j,\mu_k)\}_{j,k=0}^n\|_{\mathrm{m}_p}$ be uniformly bounded over
all disjoint sequences $\lambda$ and $\mu$, rather than all sequences. This issue is irrelevant in the Banach case $p\geq 1$, since the diagonal matrix $\{\chi_{t=s}\}_{t,s\in \mathbb{R}}$ is an $\mathcal{L}_p$-bounded Schur multiplier
for all $p\geq 1$. This is false when $p<1$, and hence some caution is needed.

Of course, \eqref{quasicommutator_lipschitz} implies both \eqref{lipschitz_definition} and \eqref{commutator_lipschitz}. It is also the case that \eqref{commutator_lipschitz} implies \eqref{quasicommutator_lipschitz}; this follows by substituting for $A$ and $X$ the matrices
\begin{equation*}
    \widetilde{A} := \begin{pmatrix} A & 0 \\ 0 & B \end{pmatrix},\quad \widetilde{X} = \begin{pmatrix} 0 & X \\ X & 0\end{pmatrix}
\end{equation*}
and using the formula
\begin{equation*}
    [\widetilde{A},\widetilde{X}] = \begin{pmatrix} 0 & AX-XB\\ AX-XB & 0 \end{pmatrix}
\end{equation*}
so that \eqref{commutator_lipschitz} implies \eqref{quasicommutator_lipschitz} due to the unitary invariance of the $\mathcal{L}_p$-quasi-norm.

The following Lemma states that \eqref{lipschitz_definition} implies \eqref{commutator_lipschitz}.
\begin{lemma}\label{lipschitz_implies_commutator_lipschitz}
    Let $0 < p\leq 1$. If $f$ is a Borel function on $\mathbb{R}$ such that for all bounded self-adjoint operators $A$ and $B$ on $H$ with $A-B \in \mathcal{L}_p$ we have
    \begin{equation*}
        \|f(A)-f(B)\|_{p} \leq c_f\|A-B\|_{p}
    \end{equation*}
    for some constant $c_f$ not depending on $A$ or $B$, then for all self-adjoint operators $A=A^*$ bounded operators $X$ such that $[A,X] \in \mathcal{L}_p$ we have
    \begin{equation*}
        \|[f(A),X]\|_{p} \leq c_f\|[A,X]\|_{p}.
    \end{equation*}
\end{lemma}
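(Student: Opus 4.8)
The plan is to realise $i[f(A),X]$ as a limit, in the quasi-norm $\|\cdot\|_p$, of differences $f(C)-f(D)$ with $C,D$ bounded self-adjoint and $C-D\in\mathcal{L}_p$, and then to invoke the hypothesis. I will first prove the estimate when $X=X^*$, and reduce the general case to this afterwards.

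Assume $X=X^*$, and let $U_t:=e^{itX}$ for $t\in\mathbb{R}$ be the associated norm-analytic one-parameter unitary group. Let $\delta$ denote the operator $T\mapsto[X,T]$ on $\mathcal{L}_p$; since $X$ is bounded, the $p$-triangle inequality gives $\|\delta(T)\|_p\le 2^{1/p}\|X\|_\infty\|T\|_p$. By Hadamard's formula $U_tAU_t^*=\sum_{N\ge0}\frac{(it)^N}{N!}\delta^N(A)$, the series converging in operator norm. Since $\delta(A)=[X,A]=-[A,X]\in\mathcal{L}_p$ by hypothesis, every term with $N\ge1$ lies in $\mathcal{L}_p$, with $\|\delta^N(A)\|_p\le(2^{1/p}\|X\|_\infty)^{N-1}\|[A,X]\|_p$; as $\sum_N c^N/(N!)^p<\infty$ for every $c>0$, the tail $\sum_{N\ge1}\frac{(it)^N}{N!}\delta^N(A)$ converges absolutely in $\|\cdot\|_p$, and its sum is $U_tAU_t^*-A$. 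In particular $U_tAU_t^*-A\in\mathcal{L}_p$ for every $t$, and $\frac1t(U_tAU_t^*-A)\to i\delta(A)=-i[A,X]$ in $\mathcal{L}_p$ as $t\to0$. For $t\ne0$ the operators $A$ and $U_tAU_t^*$ are bounded self-adjoint with difference in $\mathcal{L}_p$, so the hypothesis, together with the unitary covariance $f(U_tAU_t^*)=U_tf(A)U_t^*$ of the functional calculus, yields
\[
  \bigl\|U_tf(A)U_t^*-f(A)\bigr\|_p\le c_f\,\bigl\|U_tAU_t^*-A\bigr\|_p .
\]
Now $f(A)$ is a bounded operator ($f$ is Lipschitz, hence continuous, and $A$ has compact spectrum), and $t\mapsto U_tf(A)U_t^*=e^{itX}f(A)e^{-itX}$ is norm-analytic, so $\frac1t(U_tf(A)U_t^*-f(A))\to -i[f(A),X]$ in operator norm. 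Since $|\mu(j,S)-\mu(j,S')|\le\|S-S'\|_\infty$, Fatou's lemma shows that $S\mapsto\|S\|_p$ is lower semicontinuous for operator-norm convergence; dividing the last display by $|t|$ and letting $t\to0$ therefore gives $[f(A),X]\in\mathcal{L}_p$ and $\|[f(A),X]\|_p\le c_f\|[A,X]\|_p$.

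For an arbitrary bounded operator $X$ with $[A,X]\in\mathcal{L}_p$, apply the case just proved in the Hilbert space $H\oplus H$ to the bounded self-adjoint operators $\mathbf{A}=\left(\begin{smallmatrix}A&0\\0&A\end{smallmatrix}\right)$ and $\mathbf{X}=\left(\begin{smallmatrix}0&X\\ X^*&0\end{smallmatrix}\right)$, noting that $[\mathbf{A},\mathbf{X}]=\left(\begin{smallmatrix}0&[A,X]\\ [A,X^*]&0\end{smallmatrix}\right)\in\mathcal{L}_p$ since $[A,X^*]=-[A,X]^*$. Because $[f(\mathbf{A}),\mathbf{X}]=\left(\begin{smallmatrix}0&[f(A),X]\\ [f(A),X^*]&0\end{smallmatrix}\right)$, and the singular values of a block anti-diagonal matrix are the union of those of its two corners (so that its $\mathcal{L}_p$-quasi-norm dominates that of each corner), reading off the $(1,2)$-corner gives the commutator estimate for $X$. (This route costs a factor depending only on $p$, which is harmless; the sharp constant $c_f$ can instead be obtained from the Schur-multiplier formulation \ref{schur_boundedness}, writing $A=\sum_k\lambda_kP_k$ so that $[f(A),X]=\{f^{[1]}(\lambda_j,\lambda_k)\}_{j,k}\circ[A,X]$, the diagonal playing no role since $P_k[A,X]P_k=0$.)

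The step requiring genuine care is the differentiation in $\mathcal{L}_p$. For $p<1$ this is only a quasi-Banach space, so — as emphasised in the introduction — the integral representation $U_tAU_t^*-A=i\int_0^tU_s[X,A]U_s^*\,ds$ cannot be used directly: Riemann sums of $\mathcal{L}_p$-valued functions need not converge when $p<1$. This is precisely why I replace it by the absolutely $p$-convergent Hadamard series. The second delicate point is that we do not know a priori that $[f(A),X]\in\mathcal{L}_p$, so on the $f(A)$-side I differentiate only in operator norm and then recover membership in $\mathcal{L}_p$, with the correct quasi-norm bound, from the lower semicontinuity of $\|\cdot\|_p$. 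The remaining verifications — boundedness of $\delta$ on $\mathcal{L}_p$, norm-analyticity of the relevant curves, and unitary covariance of the functional calculus — are routine.
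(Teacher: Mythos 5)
The paper does not actually prove this lemma --- it defers to \cite{DDdPS-jfa-1997}, stating that the arguments there carry over unchanged --- so there is no in-paper proof to compare against line by line. Your argument is the standard one in that tradition: conjugate by the unitary group $e^{itX}$ for self-adjoint $X$, use the hypothesis on the pair $(e^{itX}Ae^{-itX},A)$, differentiate, and recover $\mathcal{L}_p$-membership of $[f(A),X]$ from lower semicontinuity of $\|\cdot\|_p$ under operator-norm convergence. Your handling of the quasi-Banach subtleties is sound: the Hadamard series with $\sum_N\|\cdot\|_p^p$-convergence legitimately replaces the Bochner-integral representation, and differentiating the $f(A)$-side only in operator norm before invoking lower semicontinuity is exactly the right order of operations. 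One small point you assert in passing --- that $f$ is Lipschitz, hence $f(A)$ is bounded --- does follow from the hypothesis (test it on two diagonal operators differing in a single eigenvalue, which gives $|f(t)-f(s)|\leq c_f|t-s|$), but it deserves that one line since the lemma only assumes $f$ Borel.

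The one real defect is the constant in the reduction from general bounded $X$ to self-adjoint $X$. As written, reading off only the $(1,2)$-corner of $[f(\mathbf{A}),\mathbf{X}]$ yields $\|[f(A),X]\|_p\leq c_f\bigl(\|[A,X]\|_p^p+\|[A,X^*]\|_p^p\bigr)^{1/p}=2^{1/p}c_f\|[A,X]\|_p$, whereas the lemma asserts the constant $c_f$; and the parenthetical patch via the Schur-multiplier formulation is not a complete argument in this paper's framework (it presupposes a discrete spectral resolution of $A$ and a multiplier bound with constant exactly $c_f$, which is not what \eqref{schur_boundedness} or Lemma \ref{lp_lip_implies_lp_schur} provides, and the paper explicitly warns that the constants in the equivalences may differ). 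The fix is immediate and stays inside your own argument: since $[f(A),X^*]=-[f(A),X]^*$ and $[A,X^*]=-[A,X]^*$, the two off-diagonal corners of each block commutator have the same singular values, so
\[
\|[f(\mathbf{A}),\mathbf{X}]\|_p^p=2\|[f(A),X]\|_p^p,\qquad \|[\mathbf{A},\mathbf{X}]\|_p^p=2\|[A,X]\|_p^p,
\]
and applying the self-adjoint case to $(\mathbf{A},\mathbf{X})$ makes the factors $2^{1/p}$ cancel, giving the stated inequality with constant exactly $c_f$. With that adjustment the proof is complete.
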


The following lemma essentially states that \eqref{lipschitz_definition} implies \eqref{schur_boundedness}.
\begin{lemma}\label{lp_lip_implies_lp_schur}
    Let $0 < p \leq 1$. Suppose that $f:\mathbb{R}\to \mathbb{C}$ is a Borel function which is $\mathcal{L}_p$-operator Lipschitz. Then $f^{[1]}$ is an $\mathcal{L}_p$-bounded Schur multiplier in the sense of \eqref{schur_boundedness}.
\end{lemma}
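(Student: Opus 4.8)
The plan is to deduce the Schur multiplier bound \eqref{schur_boundedness} from the quasi-commutator formulation \eqref{quasicommutator_lipschitz}, which is itself available from the hypothesis by combining Lemma \ref{lipschitz_implies_commutator_lipschitz} with the $2\times 2$ block matrix substitution recorded in the preceding discussion. So first I would record that from $\|f(A)-f(B)\|_p\leq c_f\|A-B\|_p$ one obtains a constant $c_f''$, depending only on $f$ and $p$, with
\begin{equation*}
    \|f(A)X-Xf(B)\|_p\leq c_f''\|AX-XB\|_p
\end{equation*}
for all bounded self-adjoint $A,B$ and bounded $X$ with $AX-XB\in\mathcal{L}_p$; crucially $c_f''$ does not depend on the underlying Hilbert space.

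Next, I would fix $n\geq 1$ and disjoint finite sequences $\lambda=(\lambda_j)_{j=0}^n$, $\mu=(\mu_k)_{k=0}^n$ in $\mathbb{R}$, assume $H$ is infinite-dimensional, and fix an isometric copy of $\ell_2^{n+1}$ inside $H$. Let $A$ and $B$ be the self-adjoint operators acting as $\mathrm{diag}(\lambda_0,\dots,\lambda_n)$ and $\mathrm{diag}(\mu_0,\dots,\mu_n)$ on $\ell_2^{n+1}$ and as $0$ on its orthogonal complement, so that $f(A)$ and $f(B)$ act as $\mathrm{diag}(f(\lambda_j))$ and $\mathrm{diag}(f(\mu_k))$ on the subspace. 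Given an arbitrary $Y\in M_{n+1}(\mathbb{C})$, viewed as a finite rank operator supported on $\ell_2^{n+1}$, I would set
\begin{equation*}
    X:=\left\{\frac{Y_{j,k}}{\lambda_j-\mu_k}\right\}_{j,k=0}^n,
\end{equation*}
which is well defined precisely because $\lambda$ and $\mu$ are disjoint. An entrywise computation then gives $AX-XB=Y$ and $f(A)X-Xf(B)=f^{[1]}\circ Y$, whence
\begin{equation*}
    \|f^{[1]}\circ Y\|_p=\|f(A)X-Xf(B)\|_p\leq c_f''\|AX-XB\|_p=c_f''\|Y\|_p.
\end{equation*}

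As $Y$ ranges over $M_{n+1}(\mathbb{C})$, this says $\|\{f^{[1]}(\lambda_j,\mu_k)\}_{j,k=0}^n\|_{\mathrm{m}_p}\leq c_f''$, and taking the supremum over $n$ and over disjoint $\lambda,\mu$ gives \eqref{schur_boundedness}. I do not expect any serious obstacle here; the only points to be careful about are that the constant $c_f''$ is genuinely uniform in $n$ and in the chosen sequences (it is, being extracted once from the global operator Lipschitz constant), and that disjointness of $\lambda$ and $\mu$ is exactly what makes the map $Z\mapsto AZ-ZB$ invertible on the relevant matrix space --- a restriction which, as the text already notes, cannot be removed when $p<1$ since the diagonal Schur multiplier $\{\chi_{t=s}\}$ is unbounded on $\mathcal{L}_p$. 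The argument is the quasi-Banach counterpart of the familiar computation in \cite{DDdPS-jfa-1997}.
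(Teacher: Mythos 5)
Your argument is correct and is essentially the standard proof the paper has in mind: it states this implication without printed proof, deferring to the arguments of \cite{DDdPS-jfa-1997}, and your route (passing to the quasi-commutator estimate and testing it on the diagonal operators $A=\mathrm{diag}(\lambda_j)$, $B=\mathrm{diag}(\mu_k)$ with $X_{j,k}=Y_{j,k}/(\lambda_j-\mu_k)$, which is where disjointness of $\lambda$ and $\mu$ enters) is exactly that argument, with the constant manifestly uniform in $n$ and the sequences. No gaps.
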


The well-known converse result, which is that \eqref{schur_boundedness} implies \eqref{lipschitz_definition}, is as follows.
\begin{theorem}
    Let $0 < p \leq 1$. Let $f:\mathbb{R}\to \mathbb{C}$ be a Borel function such that $\{f^{[1]}(t,s)\}_{t,s \in \mathbb{R}}$ is an $\mathcal{L}_p$-bounded Schur multiplier in the sense of \eqref{schur_boundedness}. Then $f$ is $\mathcal{L}_p$-operator Lipschitz.
\end{theorem}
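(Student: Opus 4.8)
The plan is to establish the implication \eqref{schur_boundedness} $\Rightarrow$ \eqref{lipschitz_definition} by a standard spectral-truncation and Schur-multiplier argument, taking advantage of the fact that in the conditions we only need to control divided differences on \emph{disjoint} sequences. First I would reduce to the case where $A$ and $B$ have pure point spectrum: given bounded self-adjoint $A, B$ with $A - B \in \mathcal{L}_p$, approximate $A$ and $B$ in operator norm (keeping $A-B$ fixed, or controlling $\|A-B\|_p$) by operators with finite spectrum, using that $f$ is Lipschitz (hence continuous) so that $f(A_n) \to f(A)$ etc.; this is the routine ``discretization'' step. For operators with pure point spectrum, write $A = \sum_j \lambda_j P_j$ and $B = \sum_k \mu_k Q_k$ in terms of their spectral projections. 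The key algebraic identity is the familiar
\begin{equation*}
    f(A) - f(B) = \sum_{j,k} f^{[1]}(\lambda_j, \mu_k)\, P_j (A - B) Q_k,
\end{equation*}
valid whenever $\lambda_j \neq \mu_k$ for all relevant pairs, which exhibits $f(A)-f(B)$ as the Schur multiplier with symbol $\{f^{[1]}(\lambda_j,\mu_k)\}$ applied to the operator $A-B$ in the ``coordinates'' given by the two spectral decompositions.

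The main obstacle, and the reason the disjointness hypothesis in \eqref{schur_boundedness} appears, is handling the diagonal: when $A$ and $B$ share eigenvalues, some pairs $(\lambda_j,\mu_k)$ have $\lambda_j = \mu_k$, and $f^{[1]}(\lambda_j,\lambda_j) = f'(\lambda_j)$ is not covered by the disjoint-sequence supremum; worse, for $p < 1$ the diagonal projection $\{\chi_{t=s}\}$ is \emph{not} an $\mathcal{L}_p$-bounded Schur multiplier, so one cannot simply split off the diagonal. I would deal with this by a perturbation trick: replace $B$ by $B_\varepsilon = B + \varepsilon I$ (or shift the spectrum of $B$ slightly) so that the spectra of $A$ and $B_\varepsilon$ become disjoint, apply the estimate with the disjoint sequences $\lambda$ and $\mu + \varepsilon$, and then let $\varepsilon \to 0$. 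For this limit to work one needs uniform control of $\|\{f^{[1]}(\lambda_j, \mu_k + \varepsilon)\}\|_{\mathrm{m}_p}$ as $\varepsilon \to 0$; since $f$ is Lipschitz, $f^{[1]}$ is bounded, and a further small argument (using continuity of $f^{[1]}$ off the diagonal together with the submatrix monotonicity of the $\mathrm{m}_p$-norm and perhaps Lemma~\ref{block_diagonal_formula} to isolate the clusters of eigenvalues near the diagonal) gives that the supremum over disjoint sequences already bounds these shifted symbols uniformly. Then $\|f(A) - f(B_\varepsilon)\|_p \le C \|A - B_\varepsilon\|_p$, and since $f$ is continuous and $A - B_\varepsilon \to A - B$ in $\mathcal{L}_p$, passing to the limit yields the claim.

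I would organize the write-up as: (1) reduce to finite-rank spectral decompositions via Lipschitz continuity of $f$ and $\mathcal{L}_p$-continuity of functional calculus on such approximations; (2) prove the Schur-multiplier identity for $f(A)-f(B)$ in the disjoint-spectrum case and conclude $\|f(A)-f(B)\|_p \le C_{f,p}\|A-B\|_p$ there, with $C_{f,p}$ the disjoint-sequence supremum from \eqref{schur_boundedness}; (3) remove the disjointness of spectra by the $\varepsilon$-shift, checking the uniform $\mathrm{m}_p$-bound on the shifted divided-difference symbols and taking $\varepsilon \to 0$. The expected hard part is step (3), precisely because the naive diagonal-splitting fails for $p<1$; everything else is, as the text already indicates, a verbatim adaptation of the Banach-case arguments in \cite{DDdPS-jfa-1997} once one works consistently with the $p$-triangle inequality \eqref{p-prop} in place of the ordinary triangle inequality.
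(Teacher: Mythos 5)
Your overall skeleton (the blockwise Schur-multiplier identity for the divided difference, plus special care about coincident eigenvalues) is the standard route --- the paper itself gives no proof of this implication and simply refers to \cite{DDdPS-jfa-1997} --- but two of your steps fail as written, and for the same reason: perturbations that are small in operator norm need not be small, or even finite, in $\mathcal{L}_p$. First, the shift $B_\varepsilon = B+\varepsilon I$: in infinite dimensions $\varepsilon I\notin\mathcal{L}_p$, so $A-B_\varepsilon\notin\mathcal{L}_p$, the inequality $\|f(A)-f(B_\varepsilon)\|_p\le C\|A-B_\varepsilon\|_p$ has infinite right-hand side, and your assertion that $A-B_\varepsilon\to A-B$ in $\mathcal{L}_p$ is false; moreover no operator-level shift of the spectrum of $B$ off that of $A$ can be compact when the shared spectral projections have infinite rank, so this device cannot be repaired by choosing a cleverer shift. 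Second, the ``routine discretization'' has the same defect: replacing $A,B$ by finite-spectrum approximants $g_n(A),g_n(B)$ gives $g_n(A)-g_n(B)=(A-B)+(g_n-\mathrm{id})(A)-(g_n-\mathrm{id})(B)$, and the correction terms are non-compact, so you lose all control of $\|g_n(A)-g_n(B)\|_p$ (bounding it is an instance of the very theorem being proved), while ``keeping $A-B$ fixed'' is incompatible with giving both operators finite spectrum.

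The repair is to keep the operator $X=A-B$ fixed and discretize only the symbol and the spectral measures. For a fine partition put $P_j=E_A(\Delta_j)$, $Q_k=E_B(\Delta_k')$ and choose representative points $\lambda_j\in\Delta_j$, $\mu_k\in\Delta_k'$ with the two families disjoint --- this you may always arrange by nudging the representatives, since the operator is untouched; in the pure point case the blocks $P_jXQ_k$ with $\lambda_j=\mu_k$ vanish identically, so no diagonal multiplier is ever needed, and in general $P_jXQ_k$ differs from $(\lambda_j-\mu_k)P_jQ_k$ only by $O(|\Delta|)$ so the coincident contribution disappears in the limit. Then $S:=\sum_{j,k}f^{[1]}(\lambda_j,\mu_k)P_jXQ_k$ satisfies $\|S\|_p\le C\|X\|_p$ with $C$ the disjoint-sequence supremum; note that this step requires the automatic complete boundedness of $\mathcal{L}_p$-Schur multipliers (Theorem \ref{acb}), since the blocks $P_jXQ_k$ are matrices rather than scalars --- your sketch does not mention this, and for $p<1$ it is a genuinely nontrivial ingredient (it is exactly why the paper includes the appendix). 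Finally, as the partitions refine, $S\to f(A)-f(B)$ in $\mathcal{L}_2$ (here one uses that $f$ is Lipschitz, which follows from the $1\times 1$ case of the hypothesis, and that $X\in\mathcal{L}_p\subseteq\mathcal{L}_2$, so the classical Birman--Solomyak $\mathcal{L}_2$-theory applies), hence in the weak operator topology, and one concludes with the lower semicontinuity of $\|\cdot\|_p$ under WOT limits, checked on finite-dimensional corners. With these replacements your argument becomes the proof the paper has in mind via \cite{DDdPS-jfa-1997}.
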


\section{Negative results}

\subsection{Periodic functions}\label{periodic_section}
We now prove Theorem \ref{periodic_failure}. The proof is based on negating Lemma \ref{lp_lip_implies_lp_schur} by selecting appropriate sequences
such that the matrix $\Gamma$ constructed in the proof of Lemma \ref{lp_lip_implies_lp_schur} is not an $\mathcal{L}_p$-bounded Schur multiplier. The specific form of $\Gamma$ will be a Toeplitz matrix, and necessary and sufficient conditions for a Toeplitz matrix to be an $\mathcal{L}_p$-bounded Schur multiplier are known \cite[Theorem 5.1]{Aleksandrov-Peller-hankel-and-toeplitz-2002}. However, for the sake of being self-contained we present an elementary argument.
\begin{lemma}\label{toeplitz_is_not_schur}
    Let $\varepsilon \in  (0,1)$, and let $T$ be the matrix
    \begin{equation*}
        T = \left\{\frac{1}{\varepsilon+j-k}\right\}_{j,k\geq 0}.
    \end{equation*}
    Then $T$ is not an $\mathcal{L}_p$-bounded Schur multiplier for any $p \in (0,1)$.
\end{lemma}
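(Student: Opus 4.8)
The plan is to show that the finite truncations $T_N := \{1/(\varepsilon+j-k)\}_{j,k=0}^{N-1}$ satisfy $\|T_N\|_{\mathrm{m}_p}\to\infty$ as $N\to\infty$. Since each $T_N$ is a submatrix of $T$, this immediately forces $\|T\|_{\mathrm{m}_p}=\infty$, i.e. $T$ is not an $\mathcal{L}_p$-bounded Schur multiplier. To bound $\|T_N\|_{\mathrm{m}_p}$ from below I would test against a single rank one matrix: let $\xi=(1,1,\dots,1)\in\mathbb{C}^N$ and $E_N:=\xi\otimes\xi$, so that $E_N$ has all entries equal to $1$ and $\|E_N\|_p=\|\xi\|^2=N$ (it is $N$ times a rank one projection). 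Since $T_N\circ E_N=T_N$, the definition of the Schur norm gives $\|T_N\|_{\mathrm{m}_p}\geq N^{-1}\|T_N\|_p$. Thus everything reduces to showing that $\|T_N\|_p$ grows strictly faster than $N$; concretely, I aim for $\|T_N\|_p\geq cN^{1/p}$ with $c=c(\varepsilon,p)>0$ independent of $N$, which beats $N$ precisely because $1/p>1$.

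To get this lower bound on $\|T_N\|_p=\big(\sum_j\mu(j,T_N)^p\big)^{1/p}$ I would combine a uniform upper bound on the singular values with a Hilbert--Schmidt lower bound. First, $\mu(0,T_N)=\|T_N\|_\infty\leq M_\varepsilon:=\pi/\sin(\pi\varepsilon)$: indeed $T_N$ is a compression of the bi-infinite Laurent matrix $L=\{1/(\varepsilon+j-k)\}_{j,k\in\mathbb{Z}}$, which is Fourier-conjugate to multiplication on $L_2(\mathbb{T})$ by the symbol $\phi(\theta)=\sum_{n\in\mathbb{Z}}e^{in\theta}/(n+\varepsilon)$; a one-line computation of Fourier coefficients identifies $\phi(\theta)=\pi e^{i\varepsilon(\pi-\theta)}/\sin(\pi\varepsilon)$ for $\theta\in(0,2\pi)$, a function of constant modulus $M_\varepsilon$, so $\|T_N\|_\infty\leq\|L\|_\infty=M_\varepsilon$. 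Second, keeping only the diagonal,
$$
\|T_N\|_2^2=\sum_{j,k=0}^{N-1}\frac{1}{(\varepsilon+j-k)^2}\geq\sum_{j=0}^{N-1}\frac{1}{\varepsilon^2}=\frac{N}{\varepsilon^2}.
$$
Now, since $0<p<1<2$ and $0\leq\mu(j,T_N)\leq M_\varepsilon$, we have $\mu(j,T_N)^p\geq M_\varepsilon^{p-2}\,\mu(j,T_N)^2$ for every $j$ (because $x\mapsto x^{p-2}$ is decreasing), and summing over $j$ gives $\|T_N\|_p^p\geq M_\varepsilon^{p-2}\|T_N\|_2^2\geq M_\varepsilon^{p-2}\varepsilon^{-2}N$, i.e. $\|T_N\|_p\geq cN^{1/p}$ with $c=(M_\varepsilon^{p-2}\varepsilon^{-2})^{1/p}>0$. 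Putting the pieces together, if $T$ were an $\mathcal{L}_p$-bounded Schur multiplier with $\|T\|_{\mathrm{m}_p}=K<\infty$, then $cN^{1/p}\leq\|T_N\|_p=\|T_N\circ E_N\|_p\leq\|T_N\|_{\mathrm{m}_p}\|E_N\|_p\leq KN$ for all $N\geq1$, hence $N^{1/p-1}\leq K/c$ for all $N$ — impossible since $1/p-1>0$.

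The only non-elementary step is the uniform operator-norm bound $\|T_N\|_\infty\leq M_\varepsilon$, which is exactly the shifted Hilbert inequality of Hardy--Littlewood--P\'olya; I expect this to be the main point to justify carefully. Once one is willing to quote that inequality (or, equivalently, to carry out the explicit Fourier-coefficient computation for the Laurent symbol $\phi$, which is short), the remainder of the argument is entirely elementary and self-contained, relying only on $\|\cdot\|_p\le\|\cdot\|_\infty^{(p-2)/p}\,\|\cdot\|_2^{2/p}$-type monotonicity of the $\ell_p$-norms of the singular values together with the trivial diagonal estimate for $\|T_N\|_2$.
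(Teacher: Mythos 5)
Your argument is correct, but it follows a genuinely different route from the paper. The paper tests $T$ against the dilated rank-one matrices $X_{n,m}=\sum_{j,k=0}^{n-1}e_{mj}\otimes e_{mk}$ (with $\|X_{n,m}\|_p=n$) and lets $m\to\infty$: the off-diagonal entries $1/(\varepsilon+m(j-k))$ vanish in the limit, leaving the diagonal matrix $\varepsilon^{-1}I_n$ of $\mathcal{L}_p$-norm $n^{1/p}/\varepsilon$, whence $n^{1/p-1}\leq C\varepsilon$, a contradiction. You instead fix the truncations $T_N$, test against the all-ones matrix $E_N$ (so $\|T_N\|_{\mathrm m_p}\geq N^{-1}\|T_N\|_p$), and lower-bound $\|T_N\|_p$ by playing the uniform operator-norm bound $\|T_N\|_\infty\leq\pi/\sin(\pi\varepsilon)$ (the shifted Hilbert inequality, equivalently the compression of the Laurent operator with unimodular symbol $\pi e^{i\varepsilon(\pi-\theta)}/\sin(\pi\varepsilon)$ --- your Fourier coefficient computation is right, since $e^{i\pi\varepsilon}(1-e^{-2\pi i\varepsilon})=2i\sin(\pi\varepsilon)$) against the diagonal Hilbert--Schmidt estimate $\|T_N\|_2^2\geq N/\varepsilon^2$; the elementary comparison $\mu^p\geq M_\varepsilon^{p-2}\mu^2$ for $\mu\leq M_\varepsilon$ then gives $\|T_N\|_p\gtrsim N^{1/p}$ and the same contradiction $N^{1/p-1}\lesssim 1$. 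Both proofs ultimately exploit that the diagonal of $T$ is bounded below by $1/\varepsilon$; the paper isolates it by a dilation-and-limit trick requiring no analytic input beyond rank-one computations, while your proof needs Hilbert's inequality as an extra ingredient but in return is quantitative for the truncations themselves ($\|T_N\|_{\mathrm m_p}\gtrsim N^{1/p-1}$) and reflects the Aleksandrov--Peller picture that a Toeplitz Schur multiplier of $\mathcal{L}_p$, $p<1$, cannot have an $L_\infty$ (or even $L_2$) symbol. The one step you rightly flag as needing justification, the uniform bound on $\|T_N\|_\infty$, is standard and your symbol argument suffices, so there is no gap.
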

\begin{proof}
    Let $n,m \geq 1$, and consider the matrix $X_{n,m}$ defined as
    \begin{equation*}
        X_{n,m} = \sum_{j,k=0}^{n-1} e_{mj}\otimes e_{mk}.
    \end{equation*}
    Then $X$ is $n$ times a rank one projection, so,
    \begin{equation*}
        \|X_{n,m}\|_{p} = n.
    \end{equation*}
    We also have,
    \begin{equation*}
        T\circ X_{n,m} = \sum_{j,k=0}^{n-1} \frac{1}{\varepsilon+m(j-k)}e_{mj}\otimes e_{mk}.
    \end{equation*}
    Thus if $T$ is an $\mathcal{L}_p$-bounded Schur multiplier, then there is a constant $C>0$ such that for all $n,m\geq 1$ we have
    \begin{align*}
        \|T\circ X_{n,m}\|_p &= \left\|\sum_{j,k=0}^{n-1} \frac{1}{\varepsilon+m(j-k)}e_{mj}\otimes e_{mk}\right\|_p\\
                                &= \left\|\sum_{j,k=0}^{n-1} \frac{1}{\varepsilon+m(j-k)}e_{j}\otimes e_{k}\right\|_p\\
                                &\leq Cn.
    \end{align*}
    That is, for every $n,m\geq 1$ we have
    \begin{equation*}
        \left\|\sum_{j,k=0}^{n-1} \frac{1}{\varepsilon+m(j-k)}e_{j}\otimes e_{k}\right\|_p \leq Cn.
    \end{equation*}
    Taking the limit $m\to \infty$, the off diagonal terms vanish, leaving only the diagonal. This leads to 
    \begin{equation*}
        \|\sum_{j=0}^{n-1} \frac{1}{\varepsilon}e_{j}\otimes e_{j}\|_p =  \lim_{m\to\infty} \left\|\sum_{j,k=0}^{n-1} \frac{1}{\varepsilon+m(j-k)}e_{j}\otimes e_k\right\|_p \leq Cn.
    \end{equation*}
    The left hand side is equal to $n^{1/p}/\varepsilon$, and therefore
    \begin{equation*}
        n^{1/p-1} \leq C\varepsilon
    \end{equation*}
    for all $n\geq 1$, which is impossible since $p < 1$.
\end{proof}

\begin{remark}
    The result of \cite[Theorem 5.1]{Aleksandrov-Peller-hankel-and-toeplitz-2002} states that if $0 < p < 1$, then a Toeplitz matrix $\{t_{j-k}\}_{j,k\geq 0}$ is
    a Schur multiplier of $\mathcal{L}_p$ if and only if $\{t_n\}_{n\in \mathbb{Z}}$ is the sequence of Fourier coefficients of a $p$-convex combination of point masses
    on $\mathbb{T}$. In particular, $\{t_n\}_{n\in \mathbb{Z}}$ must be the sequence of Fourier coefficients of a singular measure. In the case of the matrix $T$ in Lemma \ref{toeplitz_is_not_schur}, we have $t_n = \frac{1}{\varepsilon+n},\; n\in \mathbb{Z}$, which is the sequence of Fourier coefficients of an $L_2$-function.
    It follows that $T$ is not an $\mathcal{L}_p$-bounded Schur multiplier and this amounts to an alternative proof of Lemma \ref{toeplitz_is_not_schur}. 
\end{remark}

Recall that if $f:\mathbb{R}\to \mathbb{C}$, we denote by $f^{[1]}$ the function on $\mathbb{R}^2\setminus \{(t,t)\;:\;t\in \mathbb{R}\}$.
\begin{equation*}
    f^{[1]}(t,s) = \begin{cases}
                        \frac{f(t)-f(s)}{t-s},\quad t \neq s,\\
                    \end{cases}
\end{equation*}
\begin{theorem}\label{compact_theorem}
    Let $f:\mathbb{R}\to \mathbb{C}$ be a non-constant periodic function.
    Then the infinite matrix $\{f^{[1]}(t,s)\}_{t,s \in \mathbb{R}}$ is not an $\mathcal{L}_p$-bounded Schur multiplier for any $p \in (0,1)$ in the sense of \eqref{schur_boundedness}. That is,
    \begin{equation*}
        \sup_{\lambda\cap\mu = \emptyset} \|\{f^{[1]}(\lambda_j,\mu_k)\}_{j,k=0}^n\|_{\mathrm{m}_p} = \infty.
    \end{equation*}
\end{theorem}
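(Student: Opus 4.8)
The plan is to reduce Theorem \ref{compact_theorem} to Lemma \ref{toeplitz_is_not_schur}: for a non-constant periodic $f$ I will exhibit a pair of \emph{disjoint} arithmetic progressions $\lambda$ and $\mu$ along which the divided-difference matrix $\{f^{[1]}(\lambda_j,\mu_k)\}$ becomes a nonzero scalar multiple of a Toeplitz matrix of exactly the type treated in that lemma. Apart from Lemma \ref{toeplitz_is_not_schur} itself, the only other ingredient will be the fact that passing to a submatrix cannot increase the $\mathrm{m}_p$-quasi-norm.

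First I would fix a period $\tau>0$ of $f$ and points $a,b\in\mathbb{R}$ with $f(a)\neq f(b)$, which exist since $f$ is non-constant, and note that necessarily $a-b\notin\tau\mathbb{Z}$ --- otherwise periodicity would force $f(a)=f(b)$. Setting $\lambda_j:=a+j\tau$ and $\mu_k:=b+k\tau$ for $j,k\geq 0$, the sequences $\lambda$ and $\mu$ are therefore disjoint, so $f^{[1]}(\lambda_j,\mu_k)$ is defined for all $j,k$. Using $f(t+m\tau)=f(t)$ one computes
\[
 f^{[1]}(\lambda_j,\mu_k)=\frac{f(a)-f(b)}{(a-b)+(j-k)\tau}=\frac{c}{\varepsilon'+(j-k)},
\]
where $c:=(f(a)-f(b))/\tau\neq 0$ and $\varepsilon':=(a-b)/\tau\notin\mathbb{Z}$. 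Thus $\{f^{[1]}(\lambda_j,\mu_k)\}_{j,k\geq 0}=c\,T_{\varepsilon'}$, where $T_{\varepsilon'}:=\{(\varepsilon'+j-k)^{-1}\}_{j,k\geq 0}$, and it suffices to prove $\|T_{\varepsilon'}\|_{\mathrm{m}_p}=\infty$; indeed this gives $\sup_n\|\{f^{[1]}(\lambda_j,\mu_k)\}_{j,k=0}^n\|_{\mathrm{m}_p}=|c|\,\sup_n\|\{(\varepsilon'+j-k)^{-1}\}_{j,k=0}^n\|_{\mathrm{m}_p}=\infty$, which is the assertion of the theorem.

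To match the hypotheses of Lemma \ref{toeplitz_is_not_schur} I would put $\varepsilon:=\varepsilon'-\lfloor\varepsilon'\rfloor$, which lies in $(0,1)$ precisely because $\varepsilon'\notin\mathbb{Z}$. The index shifts $j\mapsto j+\max(0,-\lfloor\varepsilon'\rfloor)$ and $k\mapsto k+\max(0,\lfloor\varepsilon'\rfloor)$ then exhibit the matrix $T=\{(\varepsilon+j-k)^{-1}\}_{j,k\geq 0}$ of Lemma \ref{toeplitz_is_not_schur} as a submatrix of $T_{\varepsilon'}$, so submatrix monotonicity of the $\mathrm{m}_p$-norm gives $\infty=\|T\|_{\mathrm{m}_p}\leq\|T_{\varepsilon'}\|_{\mathrm{m}_p}$ and the proof is complete. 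Alternatively one can skip the reduction and rerun the proof of Lemma \ref{toeplitz_is_not_schur} with $\varepsilon$ replaced by $\varepsilon'$: the same test matrices $X_{n,m}$ and the same limit $m\to\infty$ force $n^{1/p}/|\varepsilon'|\leq C\,n$, which is impossible for large $n$ since $p<1$.

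I do not expect a serious obstacle once Lemma \ref{toeplitz_is_not_schur} is available. The two points needing care are: verifying that $\lambda$ and $\mu$ are disjoint --- this is genuinely essential, since \eqref{schur_boundedness} controls only disjoint sequences and, as observed earlier, the diagonal matrix is not an $\mathcal{L}_p$-bounded Schur multiplier for $p<1$ --- and the elementary bookkeeping by which periodicity of $f$ together with the affine denominator of $f^{[1]}$ turns $\{f^{[1]}(\lambda_j,\mu_k)\}$ into a genuine Toeplitz matrix, together with the harmless adjustment by $\lfloor\varepsilon'\rfloor$ needed to land exactly in the setting of Lemma \ref{toeplitz_is_not_schur}.
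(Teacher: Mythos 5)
Your proof is correct and follows essentially the same route as the paper: choose disjoint arithmetic progressions whose common difference is a period of $f$, so that the divided-difference matrix becomes a nonzero scalar multiple of the Toeplitz matrix of Lemma \ref{toeplitz_is_not_schur}, and then invoke that lemma together with submatrix monotonicity of $\|\cdot\|_{\mathrm{m}_p}$. The paper merely rescales $f$ to be $1$-periodic and picks $\varepsilon\in(0,1)$ with $f(\varepsilon)\neq f(0)$ at the outset, so your floor-shift/submatrix adjustment (or, equivalently, rerunning the lemma's proof with a non-integer $\varepsilon'$) is the only bookkeeping difference.
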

\begin{proof}
    By rescaling $f$ if necessary, we may assume without loss of generality that $f$ is $1$-periodic, and since $f$ is not constant we can select some $\varepsilon \in (0,1)$ such that $f(\varepsilon)\neq f(0)$.
    Consider the following two sequences:
    \begin{equation*}
        \lambda_j = j+\varepsilon,\quad \mu_k = k.
    \end{equation*}
    Due to $f$ being $1$-periodic, we compute $f^{[1]}(\lambda_j,\mu_k)$ as
    \begin{equation*}
        \frac{f(\lambda_j)-f(\mu_k)}{\lambda_j-\mu_k} = \frac{f(\varepsilon)-f(0)}{\varepsilon+j-k} = \frac{1}{\varepsilon+j-k}(f(\varepsilon)-f(0)).
    \end{equation*}
    Since $f(\varepsilon)-f(0) \neq 0$, it follows that
    \begin{equation*}
        \frac{1}{\varepsilon+j-k} = \frac{f(\lambda_j)-f(\mu_k)}{\lambda_j-\mu_k} \cdot \frac{1}{f(\varepsilon)-f(0)}.
    \end{equation*}
    It follows that if $\{f^{[1]}(t,s)\}_{t,s \in \mathbb{R}}$ were an $\mathcal{L}_p$-bounded Schur multiplier, then the matrix $\{\frac{1}{\varepsilon+j-k}\}_{j,k\geq 0}$ would also be an $\mathcal{L}_p$-bounded Schur multiplier, but this is false due to Lemma \ref{toeplitz_is_not_schur}.
\end{proof}    

Theorem \ref{compact_theorem}, combined with Lemma \ref{lp_lip_implies_lp_schur}, implies Theorem \ref{periodic_failure}.

\section{Positive results}
\subsection{Wavelet analysis}
A \emph{wavelet} is a function $\phi \in L_2(\mathbb{R})$ such that the family
\begin{equation*}
    \phi_{j,k}(t) = 2^{\frac{j}{2}}\phi(2^jt-k),\quad j,k\in \mathbb{Z},\quad t \in \mathbb{R}
\end{equation*}
of translations and dilations of $\phi$ forms an orthonormal basis of $L_2(\mathbb{R})$ \cite[Definition 6.6.1]{Grafakos-1}. For example, the Haar function
\begin{equation*}
    h(t) = \chi_{[0,1/2]}(t)-\chi_{(1/2,1]}(t),\quad t \in \mathbb{R}
\end{equation*}
is a wavelet.     
It is a theorem of Daubechies that there exist compactly supported $C^r$-wavelets for every $r > 0$ \cite{Daubechies-wavelets-1988}, \cite[Theorem 3.8.3]{Meyer-wavelets-1992}.

For this subsection, we will fix a compactly  supported wavelet $\psi$ of regularity $C^r$ for some $r>1$. In later subsections we will ask for additional smoothness on $\psi$. 
Every $f\in L_2(\mathbb{R})$ admits an $L_2$-convergent wavelet decomposition
\begin{equation*}
    f = \sum_{j,k\in \mathbb{Z}} \psi_{j,k}\langle f,\psi_{j,k}\rangle.
\end{equation*}
This is called a wavelet series. For brevity, denote
\begin{equation*}
    f_j = \sum_{k\in \mathbb{Z}} \psi_{j,k}\langle f,\psi_{j,k}\rangle\in L_2(\mathbb{R}),\quad j \in \mathbb{Z}
\end{equation*}
That is, we have the $L_2$-convergent series
\begin{equation*}
    f = \sum_{j\in \mathbb{Z}} f_j,\quad f_j(t) = \sum_{k\in \mathbb{Z}} 2^{\frac{j}{2}}\psi(2^jt-k)\langle f,\psi_{j,k}\rangle,\quad t \in \mathbb{R}.
\end{equation*}
Roughly speaking, our strategy will be to bound $\|f^{[1]}\|_{\mathrm{m}_p}$ using the wavelet decomposition and \eqref{p-prop} as follows
\begin{equation*}
    \|f^{[1]}\|_{\mathrm{m}_p}^p \leq \sum_{j\in \mathbb{Z}} \|f_j^{[1]}\|_{\mathrm{m}_p}^p.
\end{equation*}

Note that for arbitrary locally integrable functions $f$ on $\mathbb{R}$, the wavelet coefficient $\langle f,\psi_{j,k}\rangle$ is meaningful due to our assumption that $\psi$ is continuous
and compactly supported. It follows that for all locally integrable $f$, we can define
\begin{equation}\label{f_j_definition}
    f_j = \sum_{k\in \mathbb{Z}} \psi_{j,k}\langle f,\psi_{j,k}\rangle,\quad j\in \mathbb{Z}
\end{equation}
where the sum is finite on compact sets.

The following is \cite[Theorem 3.9.2]{Cohen2003}. We use the symbol $\approx$ to denote equivalence up to constants depending only on $p$ and the choice of wavelet.
\begin{lemma}\label{disjointifying}
    Let $\phi$ be an arbitrary wavelet on $\mathbb{R}$, and let $\alpha = \{\alpha_k\}_{k\in \mathbb{Z}}$ be a scalar sequence. Define
    \begin{equation*}
        \phi_{\alpha}(t) = \sum_{k\in \mathbb{Z}}\alpha_k\phi(t-k),\quad t \in \mathbb{R}.
    \end{equation*} 
    Then for all $p \in (0,\infty]$ such that $\phi$ is $p$-integrable we have
    \begin{equation*}
        \|\phi_{\alpha}\|_p \approx \|\alpha\|_{\ell_p}.
    \end{equation*}
\end{lemma}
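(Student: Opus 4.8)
Since this is \cite[Theorem 3.9.2]{Cohen2003} one could simply quote it, but I would give the short argument as follows; fix an integer $N$ with $\mathrm{supp}\,\phi\subseteq[0,N]$ (translating $\phi$ by an integer only reindexes $\alpha$, so this is harmless). For the upper bound $\|\phi_{\alpha}\|_p\lesssim\|\alpha\|_{\ell_p}$ with $0<p\le 1$, I would note that at most $N+1$ of the translates $\phi(\cdot-k)$ are nonzero at any point $t$, so that $p$-subadditivity gives $|\phi_{\alpha}(t)|^p\le\sum_k|\alpha_k|^p|\phi(t-k)|^p$ pointwise; integrating over $\mathbb{R}$ yields $\|\phi_{\alpha}\|_p^p\le\|\phi\|_p^p\|\alpha\|_{\ell_p}^p$. (For $p\ge 1$ one replaces $p$-subadditivity by H\"older's inequality applied to the at most $N+1$ nonzero terms, and $p=\infty$ is immediate.)

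The substance is the lower bound $\|\alpha\|_{\ell_p}\lesssim\|\phi_{\alpha}\|_p$ (the ``disjointifying'' content). The point is that the compact support of $\phi$ forces the restriction of $\phi_{\alpha}$ to any window $[k,k+N]$ to involve only the translates $\phi(\cdot-l)$ with $-N\le l-k\le N$, so that $\phi_{\alpha}|_{[k,k+N]}$ lies in a finite-dimensional space $W_k\subseteq L_p[k,k+N]$ — namely the translate by $k$ of the fixed space $W:=\mathrm{span}\{\phi(\cdot-l)|_{[0,N]}:-N\le l\le N\}$. Since the $L_2$- and $L_p$-quasi-norms are continuous and nondegenerate on the finite-dimensional space $W$, they are equivalent there, so there is a constant $C=C(\phi,p)$, independent of $k$, with $\|g\|_{L_2[k,k+N]}\le C\|g\|_{L_p[k,k+N]}$ for all $g\in W_k$. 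I would then recover the coefficients from the $L_2$-orthonormality of the integer translates of $\phi$: $\alpha_k=\langle\phi_{\alpha},\phi(\cdot-k)\rangle$ (note $\alpha\in\ell_p\subseteq\ell_2$, hence $\phi_{\alpha}\in L_2$). Since $\phi(\cdot-k)$ is supported in $[k,k+N]$, Cauchy--Schwarz together with the previous estimate gives
\[
  |\alpha_k|\le\|\phi\|_{L_2}\,\|\phi_{\alpha}\|_{L_2[k,k+N]}\le C\|\phi\|_{L_2}\,\|\phi_{\alpha}\|_{L_p[k,k+N]}.
\]
Raising to the $p$-th power, summing over $k\in\mathbb{Z}$, and using that every $t\in\mathbb{R}$ lies in at most $N+1$ of the windows $[k,k+N]$, I obtain $\|\alpha\|_{\ell_p}^p\le(N+1)(C\|\phi\|_{L_2})^p\|\phi_{\alpha}\|_p^p$; this is proved first for finitely supported $\alpha$ and then extended to all $\alpha\in\ell_p$ by monotone convergence. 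Together with the upper bound this gives $\|\phi_{\alpha}\|_p\approx\|\alpha\|_{\ell_p}$.

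I expect the only non-routine step to be the passage from $L_2$- (or $L_1$-) norms to $L_p$-norms on $[k,k+N]$ when $p<1$: this is false for general $L_p$ functions — a tall narrow spike has comparatively tiny $L_p$-quasi-norm — and works here only because $\phi_{\alpha}|_{[k,k+N]}$ is confined to a fixed finite-dimensional space, which is exactly where the compact support of $\phi$ enters in an essential way (the upper bound and the overlap count also use compact support, but only softly). If the wavelet is merely in $L_2$ rather than continuous the argument is unaffected as long as $\phi$ is $p$-integrable so that $W\subseteq L_p[0,N]$; the only extra care needed is in the limiting argument from finitely supported $\alpha$ to general $\alpha\in\ell_p$, which presents no real difficulty.
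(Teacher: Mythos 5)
Your proof is correct in the setting where the paper actually uses the lemma, and it takes a genuinely different route: the paper gives no proof at all, simply quoting \cite[Theorem 3.9.2]{Cohen2003}, and its accompanying remark attributes the result to the \emph{local linear independence} of the translates $\{\phi(\cdot-k)\}_{k\in\mathbb{Z}}$. You bypass that ingredient entirely, recovering the coefficients from the $L_2$-orthonormality of the translates via $\alpha_k=\langle\phi_\alpha,\phi(\cdot-k)\rangle$ and then trading $L_2$ for $L_p$ on the window $[k,k+N]$ by the equivalence of quasi-norms on the fixed finite-dimensional space $W$ (uniform in $k$ by translation invariance); the upper bound is the routine $p$-subadditivity/H\"older computation. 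What each approach buys: yours is short, self-contained and needs only orthonormality, compact support and $\phi\in L_p$; the local-linear-independence formulation behind Cohen's theorem is what one needs for more general compactly supported generators (e.g.\ biorthogonal or non-orthonormal systems), which is why the paper's remark is phrased that way and why it warns the statement fails for arbitrary compactly supported $\phi$.

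Two points to tidy. First, you silently assume compact support at the outset; your argument needs it essentially (both for the window decomposition and for finite-dimensionality of $W$), and the lemma's wording ``arbitrary wavelet'' is broader than what you prove --- for wavelets with slow decay the windowed argument breaks down (and the two-sided estimate is not to be expected in that generality), so the hypothesis should be stated, though it is consistent with the paper's standing assumption that the wavelet is compactly supported. Second, the aside $\alpha\in\ell_p\subseteq\ell_2$ is only valid for $p\leq 2$ and is not needed: since $\phi(\cdot-k)$ is supported in $[k,k+N]$ and $\phi_\alpha$ restricted to that window is a finite linear combination of $L_2$ functions, the pairing $\langle\phi_\alpha,\phi(\cdot-k)\rangle$ is defined and equals $\alpha_k$ because the whole-line orthonormality relations localize to the window; this also makes your reduction to finitely supported $\alpha$ essentially automatic, since all estimates are local. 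For $p=\infty$ the final summation over $k$ should of course be replaced by a supremum (and there the finite-dimensional step is not even needed).
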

Lemma \ref{disjointifying} relies on the fact that the family of translates $\{\phi(\cdot-j)\}_{j\in \mathbb{Z}}$ is locally linearly independent, which holds in particular when $\phi$ is a wavelet, and is false if $\phi$ were an arbitrary compactly supported function.

A simple consequence is the following identity for the $L_p$-norm of $f_j$, which is well-known. We provide a proof for convenience.
See \cite[Proposition 6.10.7]{Meyer-wavelets-1992} for a proof in the $p\geq 1$ case.
\begin{lemma}\label{wavelet_bernstein}
    Let $f$ be a locally integrable function. For every $p \in (0,\infty]$ and $j \in \mathbb{Z}$ we have 
    \begin{equation}\label{disjoint_supports}
        \|f_j\|_p \approx 2^{j\left(\frac{1}{2}-\frac{1}{p}\right)} \left(\sum_{k\in \mathbb{Z}} |\langle f,\psi_{j,k}\rangle|^p \right)^{1/p}.
    \end{equation}
    In particular, the sequence $\{\langle f,\psi_{j,k}\rangle\}_{k\in \mathbb{Z}}$ is $p$-summable if and only if $f_j \in L_p(\mathbb{R})$.
\end{lemma}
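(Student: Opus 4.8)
The plan is to reduce the statement to Lemma \ref{disjointifying} by a single change of variables. Write $\alpha = \{\alpha_k\}_{k \in \mathbb{Z}}$ with $\alpha_k := \langle f, \psi_{j,k}\rangle$; this sequence is meaningful since $\psi$ is continuous and compactly supported and $f$ is locally integrable. Unwinding the definition \eqref{f_j_definition} of $f_j$ together with the dilation/translation structure $\psi_{j,k}(t) = 2^{j/2}\psi(2^j t - k)$, I would record the pointwise identity
\[
    f_j(t) = \sum_{k \in \mathbb{Z}} \alpha_k\, 2^{j/2}\psi(2^j t - k) = 2^{j/2}\,\psi_\alpha(2^j t),\quad t \in \mathbb{R},
\]
where $\psi_\alpha(s) = \sum_{k\in\mathbb{Z}} \alpha_k \psi(s-k)$ is precisely the function appearing in Lemma \ref{disjointifying} applied to the wavelet $\phi = \psi$. (The sum is locally finite because $\psi$ is compactly supported, as already noted in the text.)

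Next I would compute the $L_p$-quasi-norm of $f_j$ via the substitution $s = 2^j t$. For $p < \infty$,
\[
    \|f_j\|_p^p = \int_{\mathbb{R}} 2^{jp/2}\,|\psi_\alpha(2^j t)|^p\, dt = 2^{jp/2}\cdot 2^{-j}\int_{\mathbb{R}} |\psi_\alpha(s)|^p\, ds = 2^{j(p/2 - 1)}\,\|\psi_\alpha\|_p^p,
\]
so that $\|f_j\|_p = 2^{j(1/2 - 1/p)}\|\psi_\alpha\|_p$; for $p = \infty$ the same dilation gives $\|f_j\|_\infty = 2^{j/2}\|\psi_\alpha\|_\infty$, which is the $p = \infty$ instance of the same formula under the paper's convention $1/2 - 1/\infty = 1/2$. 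Since $\psi$ is compactly supported and of class $C^r$, it lies in $L_p(\mathbb{R})$ for every $p \in (0,\infty]$, so Lemma \ref{disjointifying} applies and yields $\|\psi_\alpha\|_p \approx \|\alpha\|_{\ell_p} = \bigl(\sum_{k\in\mathbb{Z}}|\langle f,\psi_{j,k}\rangle|^p\bigr)^{1/p}$. Combining the two displays gives \eqref{disjoint_supports}, with implied constants depending only on $p$ and on the choice of wavelet.

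For the final ``in particular'' assertion I would simply observe that, with $j$ fixed, the factor $2^{j(1/2 - 1/p)}$ is a strictly positive finite constant, so $f_j \in L_p(\mathbb{R})$ exactly when the right-hand side of \eqref{disjoint_supports} is finite, i.e. exactly when $\{\langle f,\psi_{j,k}\rangle\}_{k\in\mathbb{Z}}$ is $p$-summable. I do not expect any genuine obstacle: the only substantive ingredient is Lemma \ref{disjointifying}, whose content --- the local linear independence of the integer translates of a wavelet --- does the real work, while everything else is the elementary scaling bookkeeping for dilated functions together with the (routine) separate treatment of $p = \infty$.
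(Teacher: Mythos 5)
Your proof is correct and follows essentially the same route as the paper: both rescale $f_j$ to the form $2^{j/2}\psi_\alpha(2^j\cdot)$, invoke Lemma \ref{disjointifying} for $\|\psi_\alpha\|_p \approx \|\alpha\|_{\ell_p}$, and account for the dilation factor in the $L_p$-quasi-norm. Your explicit change of variables and separate treatment of $p=\infty$ are just slightly more detailed bookkeeping than the paper's appeal to the rule $\|f(\lambda\cdot)\|_p = \lambda^{-1/p}\|f\|_p$.
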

\begin{proof}
    We have
    \begin{equation*}
        f_j(t) = \sum_{k\in \mathbb{Z}} 2^{\frac{j}{2}}\psi(2^jt-k)\langle f,\psi_{j,k}\rangle,\quad t \in \mathbb{R}.
    \end{equation*}
    Therefore
    \begin{equation*}
        2^{-\frac{j}{2}}f_j(2^{-j}t)  =\sum_{k\in \mathbb{Z}} \psi(t-k)\langle f,\psi_{j,k}\rangle.
    \end{equation*}
    Applying Lemma \ref{disjointifying} with $\alpha = \{\langle f,\psi_{j,k}\rangle\}_{k\in \mathbb{Z}}$ implies that
    \begin{equation*}
        \|2^{-\frac{j}{2}}f_j(2^{-j}\cdot)\|_p \approx \left(\sum_{k\in \mathbb{Z}} |\langle f,\psi_{j,k}\rangle|^p\right)^{1/p}.
    \end{equation*}
    Using the rule $\|f(\lambda\cdot)\|_p = \lambda^{-\frac{1}{p}}\|f\|_p$, it follows that
    \begin{equation*}
        2^{-\frac{j}{2}+\frac{j}{p}}\|f_j\|_p \approx \left(\sum_{k\in \mathbb{Z}} |\langle f,\psi_{j,k}\rangle|^p\right)^{1/p}.
    \end{equation*}
\end{proof}

We note for future reference that since for $p \leq q$ and all locally integrable $f$ there holds the inequality
\begin{equation*}
    \left(\sum_{k\in \mathbb{Z}} |\langle f,\psi_{j,k}\rangle|^q\right)^{1/q} \leq \left(\sum_{k\in \mathbb{Z}} |\langle f,\psi_{j,k}\rangle|^p\right)^{\frac{1}{p}}
\end{equation*}
it follows from Lemma \ref{wavelet_bernstein} that for all $j\in \mathbb{Z}$,
\begin{equation}\label{sequential_bernstein}
    \|f_j\|_q \lesssim 2^{j(\frac{1}{p}-\frac{1}{q})}\|f_j\|_p,\quad p \leq q,
\end{equation}
The same holds for $q=\infty$. That is, $\|f_j\|_\infty \lesssim 2^{\frac{j}{p}}\|f_j\|_p$ for all $p < \infty$.

Besov spaces have very simple characterisations in terms of coefficients of wavelet series. The following is \cite[Theorem 7.20]{FrazierJawerthWeiss1991}. Related results in the inhomogeneous case are \cite[Theorem 3.7.7]{Cohen2003}, \cite[Theorem 4.7]{Sawano2018}, \cite[Theorem 1.20]{Triebel-4}  (see also \cite[Section 6.10]{Meyer-wavelets-1992} for $p,q\in [1,\infty]$).
\begin{theorem}\label{besov_space_wavelet_characterisation}
    Let $p,q \in (0,\infty]$ and $s \in \mathbb{R}$. Let $f$ be a locally integrable function, and let $\psi$ be a compactly supported $C^r$ wavelet for $r > |s|$. Then $f$ belongs to the homogeneous Besov class $\dot{B}^s_{p,q}(\mathbb{R})$ if and only if 
    \begin{equation*}
        \|f\|_{\dot{B}^s_{p,q}} \approx \left(\sum_{j\in \mathbb{Z}} 2^{jsq}\|f_j\|_p^q\right)^{1/q} < \infty.
    \end{equation*}
    The relevant constants depend only on $s,p$ and $q$ and the wavelet.
    Equivalently (via Lemma \ref{wavelet_bernstein}),
    \begin{equation*}
        \|f\|_{\dot{B}^s_{p,q}} \approx \left(\sum_{j\in \mathbb{Z}} 2^{jq(s+\frac{1}{2}-\frac{1}{p})}\left(\sum_{k\in \mathbb{Z}} |\langle f,\psi_{j,k}\rangle|^p\right)^{q/p}\right)^{1/q}.
    \end{equation*}
    The usual modifications are made if $p$ or $q$ is infinite.
\end{theorem}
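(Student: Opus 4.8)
The plan is to prove the equivalence by comparing the wavelet decomposition $f=\sum_{j\in\mathbb{Z}}f_j$ with the homogeneous Littlewood--Paley decomposition $f=\sum_{n\in\mathbb{Z}}\Delta_n f$, both understood modulo polynomials, exploiting the almost orthogonality of the two families. The crucial ingredient is an off-diagonal decay estimate: there exists $\delta>0$, which can be taken strictly larger than $|s|$ precisely because $r>|s|$, such that for all $n,j\in\mathbb{Z}$
\begin{equation*}
    \|\Delta_n f_j\|_p\lesssim 2^{-\delta|n-j|}\|f_j\|_p,
\end{equation*}
together with the symmetric bound $\|(\Delta_n f)_j\|_p\lesssim 2^{-\delta|n-j|}\|\Delta_n f\|_p$, where $(\Delta_n f)_j=\sum_k\psi_{j,k}\langle\Delta_n f,\psi_{j,k}\rangle$ is the $j$-th wavelet block of the single Littlewood--Paley piece $\Delta_n f$. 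Granting these, both directions follow by a standard convolution argument on $\mathbb{Z}$. For the direction in which the wavelet side dominates, $\Delta_n$ kills polynomials, so $\Delta_n f=\sum_j\Delta_n f_j$; writing $2^{ns}=2^{(n-j)s}2^{js}$, the decay estimate shows that $\{2^{ns}\|\Delta_n f\|_p\}_n$ is dominated (after the triangle inequality, or the $p$-triangle inequality when $p<1$) by the convolution of $\{2^{js}\|f_j\|_p\}_j$ with the geometrically decaying kernel $\{2^{|m|(|s|-\delta)}\}_m$; since this kernel lies in $\ell_\tau$ for every $\tau>0$, Young's inequality -- in its quasi-Banach form when $p<1$ or $q<1$ -- yields $\|f\|_{\dot{B}^s_{p,q}}\lesssim\big(\sum_j 2^{jsq}\|f_j\|_p^q\big)^{1/q}$. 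The opposite inequality is obtained the same way: since $\psi$ has enough vanishing moments the $j$-th wavelet block operator also annihilates low-degree polynomials, so $f_j=\sum_n(\Delta_n f)_j$, and the symmetric decay bound reduces the estimate to the same convolution. The second, equivalent form of the characterisation then follows by substituting $\|f_j\|_p\approx 2^{j(1/2-1/p)}(\sum_k|\langle f,\psi_{j,k}\rangle|^p)^{1/p}$ from Lemma~\ref{wavelet_bernstein}.

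To prove the decay estimate I would use the classical fact that a compactly supported $C^r$ wavelet automatically has (roughly) $r$ vanishing moments, so that $\widehat{\psi}(\xi)$ vanishes to high order as $\xi\to 0$, while compact support plus $C^r$ regularity forces $|\widehat{\psi}(\xi)|$ to decay like $|\xi|^{-r}$ as $|\xi|\to\infty$. Writing $\Delta_n\psi_{j,k}$ as the convolution of $\psi_{j,k}$ with the kernel of $\Delta_n$, a Schwartz function concentrated at scale $2^{-n}$, one estimates it pointwise: when $n\le j$ one Taylor-expands the $\Delta_n$-kernel and uses the vanishing moments of $\psi$; when $n\ge j$ one integrates by parts against the smooth function $\psi$. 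In both regimes one obtains a bump with rapidly decaying tails whose size carries a geometric factor in $|n-j|$, and summing over $k$ -- whose contributions have essentially disjoint supports, which is exactly the setting of Lemma~\ref{disjointifying} after rescaling -- converts the pointwise bounds into the stated $L_p$ bounds, with a geometric loss from passing between $L_\infty$- and $L_p$-normalisations that is harmless once $r$ (hence the number of moments and derivatives available) is large enough relative to $|s|$. The symmetric bound follows from the same computation applied to $\langle\Delta_n f,\psi_{j,k}\rangle=\langle f,\Delta_n\psi_{j,k}\rangle$ in combination with Lemma~\ref{wavelet_bernstein}.

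Finally, some care is needed to make the two sides of the equivalence simultaneously meaningful for a merely locally integrable $f$: the wavelet blocks $f_j$ are always defined by \eqref{f_j_definition}, but to identify $\Delta_n f$ with $\sum_j\Delta_n f_j$ and $f_j$ with $\sum_n(\Delta_n f)_j$ one invokes the standard fact (recalled in Section~\ref{besov_section}) that $f=\sum_n\Delta_n f$ holds in $\mathcal{S}'(\mathbb{R})$ modulo polynomials of degree at most $L>s-1/p$, together with the observation that pairing such a polynomial against the oscillating, compactly supported function $\psi_{j,k}$ gives zero, so the polynomial ambiguity is invisible to the wavelet coefficients. I expect the main obstacle to be the decay estimate itself: squeezing a decay rate $\delta>|s|$ out of the fixed wavelet requires matching its smoothness and its number of vanishing moments against $s$ on the nose, and in the quasi-Banach range $p<1$ one must carry $L_p\to L_p$ (not merely $L_\infty$) control through the whole argument, absorbing the scale-dependent losses into $\delta$ without exceeding the budget $r>|s|$.
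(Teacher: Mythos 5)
You should first note that the paper does not actually prove this theorem: it is quoted verbatim from Frazier--Jawerth--Weiss \cite{FrazierJawerthWeiss1991} (with related inhomogeneous versions cited from Cohen, Sawano, Triebel and Meyer), so there is no internal proof to compare against. Your sketch follows the standard almost-orthogonality route that underlies those references, and for $p\ge 1$ it is essentially correct.

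The genuine gap is in the central decay estimate in the quasi-Banach range $0<p<1$, which is the only range this paper cares about. You claim the off-diagonal bounds $\|\Delta_n f_j\|_p\lesssim 2^{-\delta|n-j|}\|f_j\|_p$ and $\|(\Delta_n f)_j\|_p\lesssim 2^{-\delta|n-j|}\|\Delta_n f\|_p$ hold with some $\delta>|s|$ ``precisely because $r>|s|$''. For $p<1$ the regimes where the Littlewood--Paley scale and the wavelet scale are mismatched suffer an unavoidable loss of $\frac1p-1$. Already for a single bump this is sharp: for $n\le j$ the function $\Delta_n\psi_{j,k}$ is band-limited to $|\xi|\sim 2^n$, has height $\sim 2^{n}2^{-j/2}2^{-(j-n)(M+1)}$ (where moments of $\psi$ up to order $M$ vanish) and width $\sim 2^{-n}$, whence
\begin{equation*}
    \frac{\|\Delta_n\psi_{j,k}\|_p}{\|\psi_{j,k}\|_p}\;\approx\;2^{-(j-n)\left(M+2-\frac1p\right)},
\end{equation*}
and the analogous computation for $\|(\Delta_n f)_j\|_p$ with $j\le n$ gives the rate $r+1-\frac1p$. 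Feeding these into your convolution scheme, what is actually required is smoothness and vanishing moments exceeding roughly $\max\{s,\tfrac1p-1-s\}$ (the classical $\sigma_p-s$ condition of Frazier--Jawerth), which is \emph{not} implied by $r>|s|$ when $p<1$: take $p$ small and $s$ near $0$ and the needed rate $\tfrac1p-1-s$ dwarfs $|s|$, and for a low-moment wavelet the displayed ratio need not decay at all. Your closing paragraph flags exactly this difficulty but offers no mechanism for ``absorbing the scale-dependent losses without exceeding the budget $r>|s|$''; the single-bump computation shows that within this scheme the loss cannot be absorbed, so the argument as written establishes the theorem only for $p\ge 1$, or for $p<1$ under the stronger (and standard) hypothesis that the regularity and moments of $\psi$ exceed $\max\{s,\tfrac1p-1-s\}$ --- a hypothesis that is harmless for this paper, since every application here takes $r>\tfrac2p$.
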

Note that it is \emph{not} necessarily the case that $f \in \dot{B}^s_{p,q}(\mathbb{R})$ is equal to the sum of its wavelet series. That is, for a general locally integrable function $f$ it may not hold that
\begin{equation*}
    f = \sum_{j\in \mathbb{Z}} f_j
\end{equation*}
in any sense. For example, if $f$ is a polynomial of sufficiently small order then the above right hand side is zero \cite[Chapter 3, Proposition 4]{Meyer-wavelets-1992}. This issue is parallel
to the representation of $f$ by a Littlewood-Paley decomposition discussed in Section \ref{besov_section}.

In the next lemma, we explain how Lipschitz functions belonging to $\dot{B}^{\frac{1}{p}}_{p^\sharp,p}(\mathbb{R})$, can be expressed as a limit of wavelet series, up to a polynomial correction. We shall use the fact that if $f$
is a locally integrable function such that for all $j,k\in \mathbb{Z}$ we have
\begin{equation*}
    \langle f,\psi_{j,k}\rangle = 0
\end{equation*}
then $f$ is a polynomial. This follows from the realisation of distributions modulo polynomials by wavelet series, as in \cite[Section 6, Theorem 4(ii)]{Bourdaud-ondelette-1995}.
\begin{lemma}\label{besov_realisation}
    Let $f$ be a Lipschitz function on $\mathbb{R}$ such that $f \in \dot{B}^{\frac{1}{p}}_{p^\sharp,p}(\mathbb{R})$, where $0 < p \leq 1$. There exists a constant $c\in \mathbb{R}$ such that
    \begin{equation*}
        f(t) = f(0)+ct+\sum_{j\in \mathbb{Z}} (f_j(t)-f_j(0)),\quad t \in \mathbb{R}
    \end{equation*}
    and the series $\sum_{j\in \mathbb{Z}} f_j(t)-f_j(0)$ converges uniformly on compact sets. Moreover, $c$ can be chosen such that
    \begin{equation*}
        |c| \lesssim \|f'\|_\infty+\|f\|_{\dot{B}^{\frac{1}{p}}_{p^\sharp,p}}
    \end{equation*}
\end{lemma}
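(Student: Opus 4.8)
The plan is to set $g(t) := \sum_{j \in \mathbb{Z}}\bigl(f_j(t) - f_j(0)\bigr)$, to show that this series converges uniformly on compact subsets of $\mathbb{R}$, and then to verify that $f - g$ must be an affine function, whose slope will be the constant $c$. Throughout I would use that, by the wavelet characterisation of Besov spaces (Theorem~\ref{besov_space_wavelet_characterisation}, with smoothness index $1/p$, integrability index $p^\sharp$ and summability index $p$), the hypothesis $f \in \dot{B}^{\frac{1}{p}}_{p^\sharp,p}(\mathbb{R})$ says exactly that the sequence $\{2^{j/p}\|f_j\|_{p^\sharp}\}_{j\in\mathbb{Z}}$ lies in $\ell_p$ with $\ell_p$-norm $\approx \|f\|_{\dot{B}^{\frac{1}{p}}_{p^\sharp,p}}$; in particular this sequence lies in $\ell_\infty$, and since $0 < p \le 1$ it also lies in $\ell_1$, with norms in both spaces $\lesssim \|f\|_{\dot{B}^{\frac{1}{p}}_{p^\sharp,p}}$.

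For the convergence I would split the series at $j = 0$. For $j \geq 0$, the crude bound $|f_j(t) - f_j(0)| \leq 2\|f_j\|_\infty$ together with $\|f_j\|_\infty \lesssim 2^{j/p^\sharp}\|f_j\|_{p^\sharp} = 2^{-j}\bigl(2^{j/p}\|f_j\|_{p^\sharp}\bigr)$ (which uses \eqref{sequential_bernstein} and the identity $\tfrac{1}{p} - \tfrac{1}{p^\sharp} = 1$) gives $\sum_{j \geq 0}|f_j(t) - f_j(0)| \lesssim \sup_{j}\,2^{j/p}\|f_j\|_{p^\sharp} \lesssim \|f\|_{\dot{B}^{\frac{1}{p}}_{p^\sharp,p}}$, uniformly in $t$. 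For $j < 0$ this bound is useless, and instead I would use the mean-value estimate $|f_j(t) - f_j(0)| \leq |t|\,\|f_j'\|_\infty$ together with $\|f_j'\|_\infty \lesssim 2^{j/p}\|f_j\|_{p^\sharp}$; the latter follows from differentiating the (locally finite) wavelet series of $f_j$, which gives $\|f_j'\|_\infty \lesssim 2^{3j/2}\sup_k|\langle f,\psi_{j,k}\rangle|$ since the translates $\psi'(2^j\,\cdot - k)$ have bounded overlap, combined with $\sup_k|\langle f,\psi_{j,k}\rangle| \leq \bigl\|\{\langle f,\psi_{j,k}\rangle\}_k\bigr\|_{\ell_{p^\sharp}} \lesssim 2^{j(1/p^\sharp - 1/2)}\|f_j\|_{p^\sharp}$ from Lemma~\ref{wavelet_bernstein} and the identity $1 + \tfrac{1}{p^\sharp} = \tfrac{1}{p}$. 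Here the hypothesis $p \le 1$ is essential: it gives $\ell_p \hookrightarrow \ell_1$, hence $\sum_{j<0}2^{j/p}\|f_j\|_{p^\sharp} \leq \bigl\|\{2^{j/p}\|f_j\|_{p^\sharp}\}\bigr\|_{\ell_1} \lesssim \|f\|_{\dot{B}^{\frac{1}{p}}_{p^\sharp,p}}$, so that $\sum_{j<0}|f_j(t) - f_j(0)| \lesssim |t|\,\|f\|_{\dot{B}^{\frac{1}{p}}_{p^\sharp,p}}$, uniformly on compact sets. This establishes the uniform convergence; in particular $g$ is continuous, $g(0) = 0$, and $|g(t)| \lesssim (1 + |t|)\,\|f\|_{\dot{B}^{\frac{1}{p}}_{p^\sharp,p}}$.

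To identify $f - g$, I would compute wavelet coefficients of the partial sums $S_N := \sum_{|j| \le N}\bigl(f_j - f_j(0)\bigr)$. Since every wavelet has vanishing integral, $\int_{\mathbb{R}}\psi_{j_0,k_0} = 0$, so the constant terms $f_j(0)$ drop out, and orthonormality of $\{\psi_{j,k}\}_{j,k}$ gives $\langle S_N,\psi_{j_0,k_0}\rangle = \sum_{|j| \le N}\langle f_j,\psi_{j_0,k_0}\rangle = \langle f_{j_0},\psi_{j_0,k_0}\rangle = \langle f,\psi_{j_0,k_0}\rangle$ as soon as $N \ge |j_0|$. Because $\psi_{j_0,k_0}$ is continuous with compact support and $S_N \to g$ uniformly there, passing to the limit gives $\langle g,\psi_{j_0,k_0}\rangle = \langle f,\psi_{j_0,k_0}\rangle$ for all $j_0,k_0 \in \mathbb{Z}$. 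Thus $f - g$ is a locally integrable function all of whose wavelet coefficients vanish, so by the fact recalled just above (a locally integrable function with vanishing wavelet coefficients is a polynomial) $f - g$ is a polynomial; since $f$ is Lipschitz and $g$ grows at most linearly, $f - g$ grows at most linearly, hence has degree $\le 1$, say $f - g = a + ct$. Evaluating at $0$ and using $g(0) = 0$ gives $a = f(0)$, which is the claimed identity. Finally, letting $|t| \to \infty$ in $|c|\,|t| = |f(t) - f(0) - g(t)| \le \|f'\|_\infty|t| + |g(t)| \lesssim \|f'\|_\infty|t| + (1+|t|)\|f\|_{\dot{B}^{\frac{1}{p}}_{p^\sharp,p}}$ yields $|c| \lesssim \|f'\|_\infty + \|f\|_{\dot{B}^{\frac{1}{p}}_{p^\sharp,p}}$.

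I expect the step requiring the most care to be the low-frequency tail $\sum_{j<0}\bigl(f_j(t) - f_j(0)\bigr)$: the individual terms are not small in supremum norm, so one must extract a factor $|t|$ by differentiating $f_j$ and then control $\|f_j'\|_\infty$, where the gain $2^{j/p}$ exactly matches the weight in the Besov norm. The convergence of the resulting series is possible only because $p \le 1$ places us in the regime $\ell_p \hookrightarrow \ell_1$; this is precisely the point where the quasi-Banach hypothesis enters, and it also explains why one cannot replace $\dot{B}^{1/p}_{p^\sharp,p}$ by the larger space $\dot{B}^{1/p}_{\infty,p}$ (consistently with Theorem~\ref{periodic_failure}). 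A minor point to be careful about is that, since $f$ is only assumed Lipschitz, the wavelet coefficients $\langle f,\psi_{j,k}\rangle$ and the functions $f_j$ must be treated as genuine locally finite sums rather than via $L_2$-theory, which is legitimate by the compact support of $\psi$.
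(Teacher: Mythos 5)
Your proof is correct and follows essentially the same route as the paper: the wavelet Bernstein-type estimates together with $\ell_p\hookrightarrow\ell_1$ (for $p\le 1$) give the uniform-on-compacts convergence, and the vanishing of all wavelet coefficients of the difference between $f$ and the corrected wavelet series identifies that difference as a polynomial, which is then shown to be affine. The only differences are tactical: the paper sums $\|f_j'\|_\infty$ over all $j$ and bounds $|c|$ as $\|g'\|_\infty$ via term-by-term differentiation, whereas you split at $j=0$, derive the growth bound $|g(t)|\lesssim (1+|t|)\|f\|_{\dot{B}^{1/p}_{p^\sharp,p}}$, and pin down the polynomial degree and $|c|$ by letting $|t|\to\infty$ — both are valid and rest on the same key ingredients.
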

\begin{proof}
    For all $j\in \mathbb{Z}$ we have the Bernstein-type inequality \cite[Chapter 2, Theorem 3]{Meyer-wavelets-1992}
    \begin{equation*}
        \|f_j'\|_\infty \lesssim 2^j\|f_j\|_\infty.
    \end{equation*}
    It follows from \eqref{sequential_bernstein} that
    \begin{equation*}
        \|f_j\|_{\infty} \lesssim 2^{j\left(\frac{1}{p}-1\right)}\|f_j\|_{p^\sharp}
    \end{equation*}
    and since $p \leq 1$,
    \begin{equation*}
        \sum_{j\in \mathbb{Z}} \|f_j'\|_\infty \lesssim \|f\|_{\dot{B}^{\frac{1}{p}}_{p^\sharp,1}}\lesssim \|f\|_{\dot{B}^{\frac{1}{p}}_{p^\sharp,p}}.
    \end{equation*}
    Since the wavelet $\psi$ has been assumed to be $C^r$ for some $r>1$, for every $j\in\mathbb{Z}$ the function $f_j'$ is continuous.
    Hence the series $\sum_{j\in \mathbb{Z}} f_j'$ converges to a continuous function on $\mathbb{R}$. It follows that
    \begin{equation*}
        f'-\sum_{j\in \mathbb{Z}} f_j'
    \end{equation*}
    is a well-defined element of $L_\infty(\mathbb{R})$. Since the series converges uniformly, the function defined by
    \begin{equation*}
        g(t) := f(t)-f(0)-\sum_{j\in \mathbb{Z}} (f_j(t)-f_j(0)),\quad t\in \mathbb{R}
    \end{equation*}
    converges uniformly on compact subsets of $\mathbb{R}$ and due to having continuous derivative is absolutely continuous. By the triangle inequality, we have $\|g'\|_\infty\leq \|f'\|_\infty+\sum_{j\in \mathbb{Z}} \|f_j'\|_\infty\lesssim \|f'\|_\infty+\|f\|_{\dot{B}^{\frac{1}{p}}_{p^\sharp,p}}$. Since the series $\sum_{j\in \mathbb{Z}} f_j(t)-f_j(0)$ converges uniformly on compact subsets and $\psi$ is compactly supported, it follows that
    \begin{equation*}
        \langle g,\psi_{j,k}\rangle = 0,\quad j,k\in \mathbb{Z}.
    \end{equation*}
    The vanishing of all wavelet coefficients implies that $g$ is a polynomial (see the discussion preceding the theorem). Since $g'$ is a bounded polynomial, we must have that $g'$ is constant and hence there exists $c \in \mathbb{C}$ such that
    \begin{equation*}
        f(t)=f(0)+ct+\sum_{j\in \mathbb{Z}} f_j(t)-f_j(0),\quad t \in \mathbb{R}.
    \end{equation*}
    By our construction we have $|c| = \|g'\|_\infty \lesssim \|f'\|_\infty+\|f\|_{\dot{B}^\frac{1}{p}_{p^\sharp,p}}$.
\end{proof}

\subsection{Peller's sufficient condition revisited}
Peller's criterion \cite{Peller-besov-1990} is that if $f$ is a Lipschitz function belonging to $\dot{B}^{1}_{\infty,1}(\mathbb{R})$ then $f$ is operator Lipschitz (equivalently, $\mathcal{L}_1$-operator Lipschitz). In this subsection we explain how the decomposition of $f$ into a wavelet series leads to a new proof of this result. The ideas developed in this proof will be later used in the proof of Theorem \ref{main_sufficient_condition}, which is a more general assertion.

Note that we have the following homogeneity property: if $f_{(\lambda)}$ denotes the function $f_{(\lambda)}(t) = f(\lambda t)$, then
\begin{equation}\label{m_1_homogeneity}
    \|f_{(\lambda)}^{[1]}\|_{\mathrm{m}_1} = \lambda\|f^{[1]}\|_{\mathrm{m}_1}.
\end{equation}

Peller's original proof of the sufficiency of $\dot{B}^1_{\infty,1}$ is based on the following estimate \cite{Peller-besov-1990}:
if $f \in L_\infty(\mathbb{R})$ has Fourier transform supported in the interval $[-\sigma,\sigma]$. Then
\begin{equation}\label{peller_fav_thm}
    \|f^{[1]}\|_{\mathrm{m}_1} \lesssim \sigma\|f\|_\infty.
\end{equation}

Our proof differs from the original proof of Peller, and in place of \eqref{peller_fav_thm} we prove that for all locally integrable functions $f$ on $\mathbb{R}$ we have
$$
    \|f_j^{[1]}\|_{\mathrm{m}_1} \lesssim 2^{j}\|f_j\|_{\infty},\quad j\in \mathbb{Z}
$$
where $f_j$ is computed relative to a compactly supported $C^3$ wavelet.
This will follow as a consequence of the following result:
\begin{theorem}\label{elementary_estimate}
    Let $\phi \in C_c^3(\mathbb{R})$ be a compactly supported $C^3$ function, let $\alpha = \{\alpha_k\}_{k\in \mathbb{Z}}$ be a bounded sequence of complex numbers and let $\lambda > 0$. Define
    \begin{equation*}
        \phi_{\alpha,\lambda}(t) := \sum_{k\in \mathbb{Z}} \alpha_k\phi(\lambda t-k)
    \end{equation*}
    Then
    \begin{equation*}
        \|\phi_{\alpha,\lambda}^{[1]}\|_{\mathrm{m}_1} \lesssim \lambda\sup_{k \in \mathbb{Z}} |\alpha_k|.
    \end{equation*}
    The implied constant depends on $\phi$, but not on $\alpha$ or $\lambda$.
\end{theorem}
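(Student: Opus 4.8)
The plan is first to remove the scaling parameter: since $\phi_{\alpha,\lambda}=(\phi_{\alpha,1})_{(\lambda)}$ with $f_{(\lambda)}(t)=f(\lambda t)$, the homogeneity identity \eqref{m_1_homogeneity} reduces the claim to the case $\lambda=1$, and after rescaling $\alpha$ we may assume $\sup_k|\alpha_k|=1$. Writing $g:=\phi_{\alpha,1}=\sum_k\alpha_k\phi(\cdot-k)$ and assuming (by translation) that $\operatorname{supp}\phi\subseteq[0,R]$, the goal becomes $\|g^{[1]}\|_{\mathrm{m}_1}\lesssim_\phi 1$. The heart of the argument is a \emph{spatially localised} factorisation $g^{[1]}(t,s)=\langle X(t),Y(s)\rangle_{\mathcal H}$ into some Hilbert space $\mathcal H$ with $\sup_t\|X(t)\|_{\mathcal H}\lesssim_\phi 1$ and $\sup_s\|Y(s)\|_{\mathcal H}\lesssim_\phi 1$. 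Granting this, the bound $\|g^{[1]}\|_{\mathrm{m}_1}\lesssim_\phi 1$ follows from Lemma \ref{rank_one_suffices} (case $p=1$): for unit vectors $\xi,\eta$, the representation $g^{[1]}(t,s)=\langle X(t),Y(s)\rangle$ exhibits $g^{[1]}\circ(\xi\otimes\eta)$ as a product of two operators whose Hilbert--Schmidt norms are at most $\sup_t\|X(t)\|$ and $\sup_s\|Y(s)\|$, whence $\|g^{[1]}\circ(\xi\otimes\eta)\|_1\le\sup_t\|X(t)\|\cdot\sup_s\|Y(s)\|$ by the H\"older inequality for Schatten norms.

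To construct $X,Y$ I would pass through the almost-analytic functional calculus. Using $\phi\in C^3_c$, set $\tilde\phi(x+iy)=\chi(y)\sum_{j=0}^{2}\tfrac{(iy)^j}{j!}\phi^{(j)}(x)$ with a fixed $\chi\in C_c^\infty(\mathbb R)$ equal to $1$ near $0$; then $\tilde\phi\in C^1_c(\mathbb C)$ extends $\phi$, and a direct computation (with $\bar\partial=\tfrac12(\partial_x+i\partial_y)$) gives $\bar\partial\tilde\phi=\tfrac{i}{2}\chi'(y)\sum_{j=0}^2\tfrac{(iy)^j}{j!}\phi^{(j)}(x)+\tfrac{(iy)^2}{4}\chi(y)\phi'''(x)$, so that $\operatorname{supp}\bar\partial\tilde\phi\subseteq[0,R]\times[-1,1]$ and $|\bar\partial\tilde\phi(x+iy)|\lesssim_\phi \mathbf 1_{\{\frac12\le|y|\le1\}}+|y|^2$. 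The Cauchy--Pompeiu formula applied to the divided difference then yields, for real $t\ne s$,
\[
    \phi^{[1]}(t,s)=\frac{\phi(t)-\phi(s)}{t-s}=-\frac{1}{\pi}\int_{\mathbb C}\frac{\bar\partial\tilde\phi(z)}{(z-t)(z-s)}\,dA(z),
\]
which represents $\phi^{[1]}(t,s)=\langle x(t),y(s)\rangle_{\mathcal H_0}$ in $\mathcal H_0:=L^2\big(\mathbb C,\tfrac{1}{\pi}|\bar\partial\tilde\phi|\,dA\big)$ with $x(t)(z)=\frac{1}{z-t}$ and $y(s)(z)=\dfrac{\overline{\omega(z)}}{\overline{z-s}}$, $\omega=-\operatorname{sgn}\bar\partial\tilde\phi$. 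The key estimate is the pointwise decay
\[
    \|x(t)\|_{\mathcal H_0}^2=\frac1\pi\int_{\mathbb C}\frac{|\bar\partial\tilde\phi(z)|}{|z-t|^2}\,dA(z)\lesssim_\phi\big(1+\operatorname{dist}(t,[0,R])\big)^{-2},\qquad t\in\mathbb R,
\]
and the same for $\|y(s)\|_{\mathcal H_0}$: the $|y|^2$ vanishing of $\bar\partial\tilde\phi$ near the real axis kills what would otherwise be a logarithmic singularity when $t$ is close to $[0,R]$, while the bounded real extent of $\operatorname{supp}\bar\partial\tilde\phi$ produces the quadratic decay at infinity.

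It remains to splice the translates together. Since $\phi^{[1]}(t-k,s-k)=\langle x(t-k),y(s-k)\rangle_{\mathcal H_0}$ and, because $\phi^{[1]}(t-k,s-k)$ vanishes unless $t-k\in[0,R]$ or $s-k\in[0,R]$, the series $g^{[1]}=\sum_k\alpha_k\,\phi^{[1]}(\cdot-k,\cdot-k)$ converges pointwise (only finitely many terms at each $(t,s)$), taking $\mathcal H=\ell^2(\mathbb Z;\mathcal H_0)$ and $X(t)=(\alpha_k\,x(t-k))_{k\in\mathbb Z}$, $Y(s)=(y(s-k))_{k\in\mathbb Z}$ yields $g^{[1]}(t,s)=\langle X(t),Y(s)\rangle_{\mathcal H}$, and
\[
    \|X(t)\|_{\mathcal H}^2=\sum_{k\in\mathbb Z}|\alpha_k|^2\,\|x(t-k)\|_{\mathcal H_0}^2\lesssim_\phi\sum_{k\in\mathbb Z}\big(1+\operatorname{dist}(t,[k,k+R])\big)^{-2}\lesssim_\phi 1
\]
uniformly in $t$ (using $|\alpha_k|\le 1$), and likewise $\|Y(s)\|_{\mathcal H}\lesssim_\phi 1$ uniformly in $s$. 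Together with the first paragraph this gives $\|g^{[1]}\|_{\mathrm{m}_1}\lesssim_\phi 1$, and unwinding the reductions completes the proof. I expect the decay estimate in the second paragraph to be the only real obstacle: any "frequency-side" factorisation of $\phi^{[1]}$ produces vectors $x(t)$ of constant norm, for which the sum over the lattice diverges, so the whole scheme hinges on using a representation whose vectors decay like $(1+\operatorname{dist})^{-1}$ and on checking that this decay is lattice-summable.
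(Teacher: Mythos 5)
Your proof is correct, but it follows a genuinely different route from the paper. The paper argues entirely inside Schur-multiplier calculus: it splits $\phi_\alpha^{[1]}$ into a near-diagonal and an off-diagonal part by a cutoff $\rho(t-s)$, handles the off-diagonal part by writing it as single-variable multipliers times the Toeplitz kernel $\eta(t-s)=\frac{1-\rho(t-s)}{t-s}$ whose symbol has integrable Fourier transform (Lemma \ref{wiener_class_lemma} and Proposition \ref{schur_properties}), and handles the near-diagonal part by a generalised block-diagonal decomposition into unit cells, where only boundedly many translates of $\phi$ contribute per cell and each contributes the divided difference of a $C^3_c$ function (operator Lipschitz) times a Schwartz Toeplitz multiplier. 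You instead verify a Haagerup-type factorisation criterion for $\mathrm{m}_1$ directly: your justification of $\|g^{[1]}\|_{\mathrm{m}_1}\le\sup_t\|X(t)\|\sup_s\|Y(s)\|$ via Lemma \ref{rank_one_suffices}, the Hilbert--Schmidt factorisation of $g^{[1]}\circ(\xi\otimes\eta)$ and H\"older is sound, your computation of $\bar\partial\tilde\phi$ for the order-two almost-analytic extension is correct, the Cauchy--Pompeiu representation of the divided difference is the standard Helffer--Sj\"ostrand identity, and the decay estimate $\|x(t)\|_{\mathcal H_0}\lesssim(1+\operatorname{dist}(t,\operatorname{supp}\phi))^{-1}$ does follow exactly as you say (the $O(y^2)$ vanishing removes the local singularity, the compact support in $x$ gives the far-field decay), after which the $\ell^2(\mathbb Z;\mathcal H_0)$ splicing is legitimate because for fixed $t\neq s$ only finitely many terms of $\sum_k\alpha_k\phi^{[1]}(t-k,s-k)$ are nonzero while the inner-product series converges absolutely. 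What each approach buys: yours is more conceptual and self-contained (no Toeplitz/Fourier lemma, no block-diagonal lemma, and in fact only $C^2$ regularity of $\phi$ is really used, since $O(|y|)$ vanishing already suffices in your decay estimate), but it is intrinsically a $p=1$ argument --- the rank-one/Hilbert--Schmidt factorisation has no analogue for the quasi-norms $\mathrm{m}_p$, $p<1$ --- whereas the paper's more pedestrian splitting is deliberately built so that it transfers, almost verbatim, to the proof of Theorem \ref{quasi_wavelet_bernstein}, which is the estimate actually needed for the main theorem.
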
    

In preparation for the proof of Theorem \ref{elementary_estimate}, we record
some useful facts about Schur multipliers of $\mathcal{L}_1$.
\begin{proposition}\label{schur_properties}
    Let $\phi:\mathbb{R}^2\to \mathbb{C}$ be a bounded function.
    \begin{enumerate}[{\rm (i)}]
        \item\label{single_variable} If $\phi$ depends only on one variable then $\|\phi\|_{\mathrm{m}_1}\leq\|\phi\|_\infty$.
        \item\label{toeplitz_form} Suppose that $\phi$ has Toeplitz form. That is, there exists a bounded function $\eta$ such that $\phi(t,s) = \eta(t-s)$. Then
        \begin{equation*}
            \|\phi\|_{\mathrm{m}_1} \leq (2\pi)^{-1}\|\widehat{\eta}\|_1
        \end{equation*}
        where $\widehat{\eta}(\xi) = \int_{-\infty}^\infty e^{-it\xi}\eta(t)\,dt$ is the Fourier transform of $\eta$.
        \item\label{direct_sum} Suppose that $\phi = \sum_{n\in \mathbb{Z}} \phi_n$, where the functions $\phi_n$ are have disjoint supports in both variables. That is, if $\phi_n(t,s)\neq 0$, then for all $m\neq n$ and $r \in \mathbb{R}$ we have that $\phi_m(t,r)=0$ and $\phi_m(r,s)=0$. Then
        \begin{equation*}
            \|\phi\|_{\mathrm{m}_1} = \sup_{n\in \mathbb{Z}} \|\phi_n\|_{\mathrm{m}_1}.
        \end{equation*}
        \item\label{submultiplicative} If $\chi$ is a second bounded function on $\mathbb{R}^2$, then
        \begin{equation*}
            \|\chi\phi\|_{\mathrm{m}_1} \leq \|\chi\|_{\mathrm{m}_1}\|\phi\|_{\mathrm{m}_1}.
        \end{equation*}
        Compare \eqref{algebra}.
        \item\label{C2_sufficiency} If $\phi$ is a compactly supported $C^3$ function, then $\phi^{[1]}$ is a bounded $\mathcal{L}_1$-Schur multiplier.
    \end{enumerate}
\end{proposition}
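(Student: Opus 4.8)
The plan is to establish the five assertions essentially in order. Parts~(i) and~(iv) are immediate: if $\phi(t,s)=g(t)$ then $\phi\circ B=D_gB$, where $D_g$ is the diagonal operator with symbol $g$, so $\|\phi\circ B\|_1\le\|g\|_\infty\|B\|_1=\|\phi\|_\infty\|B\|_1$, and symmetrically if $\phi$ depends only on $s$; while (iv) is just the identity $(\chi\phi)\circ B=\chi\circ(\phi\circ B)$ combined with \eqref{algebra}. The unifying idea behind (ii) and the core of (v) is that Fourier inversion writes the relevant symbol as an average of ``unimodular rank one'' symbols $(t,s)\mapsto u(t)\overline{v(s)}$ with $|u|\equiv|v|\equiv 1$; Schur multiplication by such a symbol carries $B$ to $D_uBD_{\overline v}$, a conjugation by diagonal unitaries, so it preserves every singular value and has $\mathrm{m}_1$-norm exactly $1$.

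For (ii), writing $\eta(x)=\tfrac1{2\pi}\int_{\mathbb R}\widehat\eta(\xi)e^{ix\xi}\,d\xi$ (the assertion being vacuous unless $\widehat\eta\in L_1$) gives $\phi(t,s)=\eta(t-s)=\tfrac1{2\pi}\int_{\mathbb R}\widehat\eta(\xi)\,e^{it\xi}\,\overline{e^{is\xi}}\,d\xi$, and integrating the triangle inequality over $\xi$ — each elementary symbol contributing $1$ — yields $\|\phi\|_{\mathrm{m}_1}\le(2\pi)^{-1}\|\widehat\eta\|_1$. For (iii), the hypothesis says exactly that there are pairwise disjoint sets $\{T_n\}$ and pairwise disjoint sets $\{S_n\}$ with $\phi_n$ supported in $T_n\times S_n$ and $\phi$ supported in $\bigcup_n(T_n\times S_n)$; hence every finite submatrix of $\phi$ has a generalised block-diagonal structure across the disjoint rectangles $T_n\times S_n$ (only finitely many of which meet a given finite submatrix), and the $\mathrm{m}_1$-case of Lemma~\ref{block_diagonal_formula} gives $\|\phi\|_{\mathrm{m}_1}\le\sup_n\|\phi_n\|_{\mathrm{m}_1}$. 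The reverse inequality is immediate because the restriction of $\phi$ to $T_n\times S_n$ equals $\phi_n$, and passing to a submatrix does not increase the $\mathrm{m}_1$-norm.

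The substantive point is (v). Since $\phi\in C_c^3$, the functions $\phi,\phi',\phi'',\phi'''$ all lie in $L_1(\mathbb R)$, whence $|\widehat\phi(\xi)|\le\|\phi\|_1$ and $|\xi|^3|\widehat\phi(\xi)|=|\widehat{\phi'''}(\xi)|\le\|\phi'''\|_1$, so that $\int_{\mathbb R}|\xi|\,|\widehat\phi(\xi)|\,d\xi<\infty$. Fourier inversion then represents the divided difference as
\[
\phi^{[1]}(t,s)=\frac1{2\pi}\int_{\mathbb R}\widehat\phi(\xi)\,\frac{e^{it\xi}-e^{is\xi}}{t-s}\,d\xi ,
\]
and the elementary identity $\dfrac{e^{it\xi}-e^{is\xi}}{t-s}=i\xi\int_0^1 e^{i\theta t\xi}\,e^{i(1-\theta)s\xi}\,d\theta$ exhibits the $\xi$-symbol as $i\xi$ times an average of unimodular rank one symbols, so its $\mathrm{m}_1$-norm is at most $|\xi|$. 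Inserting this and integrating the triangle inequality gives $\|\phi^{[1]}\|_{\mathrm{m}_1}\le(2\pi)^{-1}\int_{\mathbb R}|\xi|\,|\widehat\phi(\xi)|\,d\xi<\infty$, which is the claim; note that the hypothesis $\phi\in C^3$ (rather than merely $C^2$) is precisely what makes $|\xi|\,|\widehat\phi(\xi)|$ integrable at infinity.

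I expect the only delicate steps to be the two interchanges of the $\xi$-integral with the $\mathcal L_1$-norm (in (ii) and (v)) together with the justification that the elementary symbols have $\mathrm{m}_1$-norm $\le 1$. I would handle the interchanges by fixing an arbitrary finite submatrix, so that the integral becomes a finite-dimensional vector-valued integral, dominating the integrand in $\mathcal L_1$ by $|\widehat\eta(\xi)|$ (resp.\ $|\xi|\,|\widehat\phi(\xi)|$) via Lemma~\ref{rank_one_suffices} and the elementary bound, applying dominated convergence, and finally taking the supremum over finite submatrices; the bound on the elementary symbols is just unitary invariance of $\|\cdot\|_1$. None of this should present real difficulty, but it is where the care lies.
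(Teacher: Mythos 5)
Your proposal is correct and follows essentially the same route as the paper: (i), (iii), (iv) via the elementary/diagonal and block-diagonal facts from the preliminaries, (ii) by Fourier-representing the Toeplitz symbol as an average of unimodular product symbols of $\mathrm{m}_1$-norm one, and (v) by reducing to the integrability of $\xi\mapsto|\xi|\,|\widehat\phi(\xi)|$, which is exactly the ``Fourier transform of the derivative is integrable'' argument the paper invokes. Your write-up of (v) merely spells out in detail what the paper leaves as a one-line remark, so no comparison is needed.
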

The nontrivial only components of the above proposition not already covered in the preliminaries are \eqref{toeplitz_form} and \eqref{C2_sufficiency}. 
To prove \eqref{toeplitz_form}, it only suffices to represent $\phi(t,s)$ as
\begin{equation*}
    \phi(t,s) = (2\pi)^{-1} \int_{-\infty}^\infty e^{i\xi t}e^{-i\xi s}\widehat{\eta}(\xi)\,d\xi
\end{equation*} 
and note that
\begin{equation*}
    \|\phi\|_{\mathrm{m}_1} \leq (2\pi)^{-1}\int_{-\infty}^\infty \|\{e^{i\xi t}\}_{t,s\in \mathbb{R}}\|_{\mathrm{m}_1}\|\{e^{-i\xi s}\}_{t,s\in \mathbb{R}}\|_{\mathrm{m}_1}|\widehat{\eta}(\xi)|\,d\xi = (2\pi)^{-1}\|\widehat{\eta}\|_1.
\end{equation*}
The assertion \eqref{C2_sufficiency} is that $C^3_c(\mathbb{R})$ functions $\phi$ are operator Lipschitz. This follows from the fact that the Fourier transform of
the derivative of $\phi$ is integrable. Alternatively, see Theorem \ref{Cbeta_sufficiency} below.

\begin{lemma}\label{wiener_class_lemma}
    Let $\rho$ be a compactly supported smooth function on $\mathbb{R}$ equal to $1$ in a neighbourhood of zero. The Fourier transform of the $L_2$-function
    \begin{equation*}
        \eta(t) = \frac{1-\rho(t)}{t}, t\in \mathbb{R}
    \end{equation*}
    is integrable.
\end{lemma}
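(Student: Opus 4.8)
The plan is to establish the stronger fact that \emph{both} $\eta$ and its derivative $\eta'$ belong to $L_2(\mathbb{R})$; integrability of $\widehat{\eta}$ then follows from a routine application of the Cauchy--Schwarz inequality. As a preliminary observation, since $\rho\equiv 1$ on a neighbourhood of $0$ the numerator $1-\rho$ vanishes near $0$, so $\eta$ is identically $0$ near $0$ and is the product of a smooth function with $t^{-1}$ away from $0$; hence $\eta$ is smooth on all of $\mathbb{R}$. Moreover $\rho$ is compactly supported, so there is $R>0$ with $\eta(t)=t^{-1}$ for $|t|\ge R$. In particular $\eta\in L_2(\mathbb{R})$ (as already asserted in the statement), $\eta$ is bounded, and $\eta(t)\to 0$ as $|t|\to\infty$.

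Next I would check that $\eta'\in L_2(\mathbb{R})$. Differentiating,
\[
    \eta'(t) = -\frac{\rho'(t)}{t} - \frac{1-\rho(t)}{t^2},\qquad t\ne 0 .
\]
Both terms vanish on the neighbourhood of $0$ on which $\rho\equiv 1$, so $\eta'$ is smooth. The term $\rho'(t)/t$ is supported in the compact set $\{\,t:\rho'(t)\ne 0\,\}$, which is bounded away from $0$, hence it is bounded with compact support and lies in $L_2$. For the second term, when $|t|\ge R$ we have $1-\rho(t)=1$, so it equals $t^{-2}$, which is square-integrable at infinity; on the remaining bounded region it is bounded. Therefore $\eta'\in L_2(\mathbb{R})$.

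Finally, since $\eta$ is $C^1$ its classical derivative coincides with the distributional one, and because $\eta(t)\to 0$ at infinity, integration by parts gives $\widehat{\eta'}(\xi)=i\xi\,\widehat{\eta}(\xi)$. Plancherel's theorem then shows that $\xi\mapsto\xi\,\widehat{\eta}(\xi)$ lies in $L_2(\mathbb{R})$, and likewise $\widehat{\eta}\in L_2(\mathbb{R})$, so $(1+|\xi|)\widehat{\eta}(\xi)\in L_2(\mathbb{R})$. Writing
\[
    \int_{\mathbb{R}} |\widehat{\eta}(\xi)|\,d\xi
    = \int_{\mathbb{R}} \frac{1}{1+|\xi|}\,\bigl(1+|\xi|\bigr)\,|\widehat{\eta}(\xi)|\,d\xi
    \le \Bigl\|\tfrac{1}{1+|\xi|}\Bigr\|_{L_2(\mathbb{R})}\,\bigl\|(1+|\xi|)\widehat{\eta}\bigr\|_{L_2(\mathbb{R})} < \infty
\]
by Cauchy--Schwarz, we conclude $\widehat{\eta}\in L_1(\mathbb{R})$. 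There is no real obstacle in this argument; the only points needing a little care are the cancellation of the singularity at $0$ (which makes $\eta$ genuinely smooth rather than merely $L_2$) and keeping track of the decay of $\eta$ and $\eta'$ at infinity when splitting $\mathbb{R}$ into a bounded region and $|t|\ge R$.
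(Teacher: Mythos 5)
Your proof is correct, but it follows a different route from the paper. You establish the stronger fact that $\eta$ lies in the Sobolev space $H^1(\mathbb{R})$ (both $\eta$ and $\eta'$ in $L_2$) and then invoke the standard embedding of $H^1(\mathbb{R})$ into the Wiener algebra: writing $|\widehat{\eta}| = (1+|\xi|)^{-1}\cdot(1+|\xi|)|\widehat{\eta}|$ and applying Cauchy--Schwarz. The paper instead integrates by parts $k\geq 2$ times to show that $\xi^k\widehat{\eta}(\xi)$ is bounded, i.e.\ $\widehat{\eta}$ has rapid decay at infinity, and then uses $\eta\in L_2$ (so $\widehat{\eta}\in L_2$, hence locally integrable) to control $\widehat{\eta}$ near the origin. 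Your argument is more economical: it needs only one derivative of $\eta$ in $L_2$, so even a $C^1$ cut-off $\rho$ would suffice, and the single weight $(1+|\xi|)^{-1}\in L_2$ handles the local part and the decay simultaneously; the paper's argument uses more smoothness but yields more (rapid decay of $\widehat{\eta}$), which is not needed for the lemma. One small point of care: since $\eta\notin L_1(\mathbb{R})$ (it behaves like $1/t$ at infinity), the identity $\widehat{\eta'}(\xi)=i\xi\,\widehat{\eta}(\xi)$ is not literally obtained by integrating an absolutely convergent integral by parts; it is cleanest to justify it through the Fourier transform of tempered distributions (or by truncation and passage to the limit in $L_2$), noting that the classical derivative of the $C^1$ function $\eta$ coincides with its distributional derivative, as you observe. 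This is a standard fact and the paper's own proof engages in the same kind of non-absolutely-convergent manipulation, so it is an informality rather than a gap.
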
 
\begin{proof}
    As a non-absolutely convergent integral, we have
    \begin{equation*}
        \widehat{\eta}(\xi) = \int_{-\infty}^\infty e^{-i\xi t}\frac{1-\rho(t)}{t}\,dt.
    \end{equation*}
    Applying integration by parts $k$ times yields
    \begin{equation*}
        (i\xi)^k\widehat{\eta}(\xi) = \int_{-\infty}^\infty e^{-i\xi t}\left(\frac{d}{dt}\right)^k\left(\frac{1-\rho(t)}{t}\right)\,dt.
    \end{equation*}
    When $k > 1$, this defines an absolutely convergent integral and it follows
    that $\xi^k\widehat{\eta}(\xi)$ is uniformly bounded in $\xi$ for all $k>1$.
    Hence, $\widehat{\eta}(\xi)$ has rapid decay as $\xi\to \pm\infty$.

    Since $\eta$ belongs to $L_2(\mathbb{R}),$ the Fourier transform $\widehat{\eta}$ also belongs to $L_2(\mathbb{R}).$ In particular, $\widehat{\eta}$ is locally integrable.
    Hence $\widehat{\eta}$ has rapid decay at infinity and is integrable near zero. Thus $\widehat{\eta}$ is integrable over $\mathbb{R}.$
\end{proof}
    
\begin{proof}[Proof of Theorem \ref{elementary_estimate}]
    By the homogeneity property \eqref{m_1_homogeneity}, it suffices to take $\lambda =1$, and for brevity we denote $\phi_{\alpha} = \phi_{\alpha,1}$.

    Let $\rho$ be a smooth compactly supported function on $\mathbb{R}$ such that $\rho$ is identically $1$ in a neighbourhood of zero, define $\eta(t) = \frac{1-\rho(t)}{t}$
    as in Lemma \ref{wiener_class_lemma}.
    
    We split the divided difference of $\phi_{\alpha}$ as
    \begin{equation}\label{diagonal_splitting}
        \phi_{\alpha}^{[1]}(t,s) = \phi_{\alpha}^{[1]}(t,s)\rho(t-s) + \phi_{\alpha}^{[1]}(t,s)(1-\rho(t-s)) \stackrel{\mathrm{def}}{=} A(t,s)+B(t,s). 
    \end{equation}
    We bound each summand separately. 
    
    For the second summand in \eqref{diagonal_splitting}, we have by definition that
    \begin{align*}
        B(t,s) &:= \sum_{k\in \mathbb{Z}} \alpha_k \frac{\phi(t-k)-\phi(s-k)}{t-s}(1-\rho(t-s))\\
        &= \left(\sum_{k\in \mathbb{Z}} \alpha_k(\phi(t-k)-\phi(s-k))\right)\eta(t-s)\\
        &= \sum_{k\in \mathbb{Z}} \alpha_k\phi(t-k)\eta(t-s) - \sum_{k\in \mathbb{Z}} \alpha_k\phi(s-k)\eta(t-s).
    \end{align*}
    Using Proposition \ref{schur_properties}.\eqref{submultiplicative} and the triangle inequality, we have
    \begin{equation*}
        \|B\|_{\mathrm{m}_1} \leq \|\sum_{k\in \mathbb{Z}} \alpha_k\phi(t-k)\|_{\mathrm{m}_1}\|\eta(t-s)\|_{\mathrm{m}_1} + \|\sum_{k\in \mathbb{Z}} \alpha_k\phi(s-k)\|_{\mathrm{m}_1}\|\eta(t-s)\|_{\mathrm{m}_1}.
    \end{equation*}
    By Lemma \ref{wiener_class_lemma}, the Fourier transform of $\eta$ is integrable
    and hence Proposition \ref{schur_properties}.\eqref{toeplitz_form} implies 
    that $\|\eta(t-s)\|_{\mathrm{m}_1} < \infty$. Proposition \ref{schur_properties}.\eqref{single_variable} implies that
    \begin{equation*}
        \|\sum_{k\in \mathbb{Z}} \alpha_k\phi(\cdot -k)\|_{\mathrm{m}_1}\lesssim \sup_{k\in \mathbb{Z}} |\alpha_k|.
    \end{equation*}
    Thus,
    \begin{equation*}
        \|B\|_{\mathrm{m}_1} \lesssim \sup_{k\in \mathbb{Z}} |\alpha_k|.
    \end{equation*}
    
    Now we bound the first summand in \eqref{diagonal_splitting}. We may assume that $\rho$ is supported in the interval $(-1,1)$. It follows that
    the function $(t,s) \mapsto \phi_{\alpha}^{[1]}(t,s)\rho(t-s)$ is supported in the strip
    $\{(t,s)\in \mathbb{R}^2\;:\; |t-s|< 1\}.$
    Observe that we have
    \begin{equation*}
        \{(t,s)\in \mathbb{R}^2\;:\; |t-s|< 1\} \subset \bigcup_{n\in \mathbb{Z},|j|<2} [n,n+1)\times [n+j,n+j+1).
    \end{equation*}
    Therefore,
    \begin{align*}
        A(t,s) := \phi_{\alpha}^{[1]}(t,s)\rho(t-s) &= \sum_{|j|<2}\sum_{n\in \mathbb{Z}} \phi_{\alpha}^{[1]}(t,s)\rho(t-s)\chi_{[n,n+1)}(t)\chi_{[n+j,n+j+1)}(s)\\
                                            &= \sum_{|j|<2}\left(\sum_{n,k\in \mathbb{Z}} \alpha_k\frac{\phi(t-k)-\phi(s-k)}{t-s}\rho(t-s)\chi_{[n,n+1)}(t)\chi_{[n+j,n+j+1)}(s)\right)\\
                                            &= \sum_{|j|<2} F_j(t,s)
    \end{align*}
    where for each $|j|<2$ we have denoted
    \begin{equation*}
        F_j(t,s) := \sum_{n,k\in \mathbb{Z}} \alpha_k\frac{\phi(t-k)-\phi(s-k)}{t-s}\rho(t-s)\chi_{[n,n+1)}(t)\chi_{[n+j,n+j+1)}(s).
    \end{equation*}
    This function can be written in the form 
    \begin{equation*}
        F_j(t,s) = \sum_{n\in \mathbb{Z}} \chi_{[n,n+1)}(t)\chi_{[n+j,n+j+1)}(s)\sum_{k\in \mathbb{Z}} \alpha_k \frac{\phi(t-k)-\phi(s-k)}{t-s}\rho(t-s)\chi_{[n,n+1)}(t)\chi_{[n+j,n+j+1)}(s). 
    \end{equation*}
    Hence, $F_j$ has the form required in Proposition \ref{schur_properties}.\eqref{direct_sum}. Thus,
    \begin{equation*}
        \|F_j\|_{\mathrm{m}_1} = \sup_{n\in \mathbb{Z}} \|\sum_{k\in \mathbb{Z}} \alpha_k\frac{\phi(t-k)-\phi(s-k)}{t-s}\rho(t-s)\chi_{[n,n+1)}(t)\chi_{[n+j,n+j+1)}(s)\|_{\mathrm{m}_1}.
    \end{equation*}
    Since $\phi$ is compactly supported, for each $n$ the sum over $k$ has only finitely many terms. In fact, there exists a constant $N$ (depending on $\phi$ and $j$) such that for $|n-k|> N$ we have
    \begin{equation*}
        \frac{\phi(t-k)-\phi(s-k)}{t-s}\rho(t-s)\chi_{[n,n+1)}(t)\chi_{[n+j,n+j+1)}(s) = 0
    \end{equation*}
    Therefore,
    \begin{equation*}
        \|F_j\|_{\mathrm{m}_1} = \sup_{n\in \mathbb{Z}} \|\sum_{|k-n|\leq N}\alpha_k \frac{\phi(t-k)-\phi(s-k)}{t-s}\rho(t-s)\chi_{[n,n+1)}(t)\chi_{[n+j,n+j+1)}(s)\|_{\mathrm{m}_1}.
    \end{equation*}
    Since $\phi\in C^3$, the divided difference $(t,s)\mapsto \frac{\phi(t-k)-\phi(s-k)}{t-s}$
    is a bounded Schur multiplier with norm independent of $k$ by Proposition \ref{schur_properties}.\eqref{C2_sufficiency}. Similarly, since the Fourier transform of $\rho$ is Schwartz class the function $(t,s) \mapsto \rho(t-s)$ is a bounded Schur multiplier by Proposition \ref{schur_properties}.\eqref{toeplitz_form}. It follows that
    \begin{equation*}
        \|F_j\|_{\mathrm{m}_1} \lesssim \sup_{n\in \mathbb{Z}} \sum_{|k-n|<N} |\alpha_k| \lesssim_N \sup_{k\in \mathbb{Z}} |\alpha_k|.
    \end{equation*}
    By the triangle inequality, it follows that
    \begin{equation*}
        \|A\|_{\mathrm{m}_1} \lesssim \sup_{k\in \mathbb{Z}} |\alpha_k|.
    \end{equation*}
    Finally, from \eqref{diagonal_splitting} we have
    \begin{equation*}
        \|\phi^{[1]}_{\alpha}\|_{\mathrm{m}_1} \lesssim \sup_{k\in \mathbb{Z}} |\alpha_k|.
    \end{equation*}
\end{proof}
Using Lemma \ref{wavelet_bernstein}, we can deduce the following substitute for \eqref{peller_fav_thm}.
Recall that $f_j = \sum_{k\in \mathbb{Z}} \psi_{j,k}\langle f,\psi_{j,k}\rangle.$
\begin{lemma}\label{peller_fav_easy}
    Let $f$ be a locally integrable function on $\mathbb{R}$, and let $j\in \mathbb{Z}$ be such that $f_j$ is bounded where $f_j$ is computed with respect to a compactly supported $C^3$ wavelet $\psi$. We have
    \begin{equation*}
        \|f_j^{[1]}\|_{\mathrm{m}_1} \lesssim 2^{j}\|f_j\|_\infty.
    \end{equation*}
\end{lemma}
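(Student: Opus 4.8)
The plan is to recognize $f_j$ as an instance of the function $\phi_{\alpha,\lambda}$ appearing in Theorem~\ref{elementary_estimate} and then invoke that theorem directly. Since
$$
f_j(t) = \sum_{k\in\mathbb{Z}} 2^{\frac{j}{2}}\psi(2^jt-k)\langle f,\psi_{j,k}\rangle,
$$
I would set $\phi = \psi$, $\lambda = 2^j$, and $\alpha_k = 2^{\frac{j}{2}}\langle f,\psi_{j,k}\rangle$, so that $f_j = \phi_{\alpha,\lambda}$. The wavelet $\psi$ has been fixed to be compactly supported and $C^3$, so $\phi = \psi\in C_c^3(\mathbb{R})$, exactly the hypothesis of Theorem~\ref{elementary_estimate}; this is where the regularity assumption on $\psi$ enters.

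Next I would identify $\sup_k|\alpha_k|$ with $\|f_j\|_\infty$. This is the endpoint case $p=\infty$ of Lemma~\ref{wavelet_bernstein}, which gives
$$
\|f_j\|_\infty \approx 2^{\frac{j}{2}}\sup_{k\in\mathbb{Z}}|\langle f,\psi_{j,k}\rangle| = \sup_{k\in\mathbb{Z}}|\alpha_k|.
$$
In particular, the assumption that $f_j$ is bounded guarantees that $\alpha$ is a bounded sequence, so Theorem~\ref{elementary_estimate} is applicable with this choice of data.

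Applying Theorem~\ref{elementary_estimate} then yields
$$
\|f_j^{[1]}\|_{\mathrm{m}_1} = \|\phi_{\alpha,\lambda}^{[1]}\|_{\mathrm{m}_1} \lesssim \lambda\sup_{k\in\mathbb{Z}}|\alpha_k| = 2^j\sup_{k\in\mathbb{Z}}|\alpha_k| \approx 2^j\|f_j\|_\infty,
$$
using the $p=\infty$ case of Lemma~\ref{wavelet_bernstein} once more in the final step. This is the desired estimate. There is no substantive obstacle: the lemma is a routine combination of Theorem~\ref{elementary_estimate} with the wavelet-norm identity of Lemma~\ref{wavelet_bernstein}. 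The only point needing a moment's care is the bookkeeping of the dilation factor $2^{j/2}$ when passing between $\sup_k|\langle f,\psi_{j,k}\rangle|$ and $\|f_j\|_\infty$, which is precisely what Lemma~\ref{wavelet_bernstein} supplies.
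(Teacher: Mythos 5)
Your proposal is correct and follows essentially the same route as the paper: the paper also identifies $f_j$ with $\phi_{\alpha,\lambda}$ for $\phi=\psi$, $\lambda=2^j$, $\alpha_k=2^{j/2}\langle f,\psi_{j,k}\rangle$, applies Theorem~\ref{elementary_estimate}, and converts $\sup_k|\alpha_k|$ into $\|f_j\|_\infty$ via the $p=\infty$ case of Lemma~\ref{wavelet_bernstein} (i.e.\ \eqref{disjoint_supports}). No gaps.
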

\begin{proof}
    This is essentially a special case of Theorem \ref{elementary_estimate}. We have
    \begin{equation*}
        f_j(t) = \sum_{k\in \mathbb{Z}} 2^{\frac{j}{2}} \psi(2^j t-k)\langle f,\psi_{j,k}\rangle,\quad t \in \mathbb{R}.
    \end{equation*}
    Theorem \ref{elementary_estimate} and \eqref{disjoint_supports} together yield
    \begin{equation*}
        \|f_j^{[1]}\|_{\mathrm{m}_1} \leq 2^{j}\sup_{k\in \mathbb{Z}} 2^{\frac{j}{2}}|\langle f,\psi_{j,k}\rangle| \approx 2^j\|f_j\|_{\infty}.
    \end{equation*}
\end{proof}

Finally, we achieve Peller's sufficient condition.
\begin{corollary}\label{Peller_critereon}
    Let $f \in \dot{B}^1_{\infty,1}(\mathbb{R})$ be Lipschitz. Then $\|f^{[1]}\|_{\mathrm{m}_1}< \infty$, and
    \begin{equation*}
        \|f^{[1]}\|_{\mathrm{m}_1} \lesssim \|f'\|_{\infty} + \|f\|_{\dot{B}^1_{\infty,1}}.
    \end{equation*}
\end{corollary}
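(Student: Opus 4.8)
The plan is to feed the wavelet decomposition of Lemma~\ref{besov_realisation} into the single-scale bound of Lemma~\ref{peller_fav_easy}, exploiting that for $p=1$ the quantity $\|\cdot\|_{\mathrm{m}_1}$ is a genuine norm, so the ordinary triangle inequality is at our disposal. Since $f$ is Lipschitz and lies in $\dot{B}^1_{\infty,1}(\mathbb{R}) = \dot{B}^{1/p}_{p^\sharp,p}(\mathbb{R})$ for $p=1$, Lemma~\ref{besov_realisation} produces a constant $c$ with $|c| \lesssim \|f'\|_\infty + \|f\|_{\dot{B}^1_{\infty,1}}$ such that $f(t) = f(0) + ct + \sum_{j\in\mathbb{Z}}\bigl(f_j(t)-f_j(0)\bigr)$, the series converging uniformly on compact sets. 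The affine part $t\mapsto f(0)+ct$ contributes to the divided difference the constant matrix with all entries equal to $c$, whose $\mathrm{m}_1$-norm is $|c|\cdot\|\{1\}\|_{\mathrm{m}_1}\le |c|$ by Proposition~\ref{schur_properties}.\eqref{single_variable}.

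For the remaining sum, I would first note that the constants $f_j(0)$ cancel in every difference quotient, so for all $t\neq s$ one has the absolutely convergent identity $f^{[1]}(t,s) = c + \sum_{j\in\mathbb{Z}} f_j^{[1]}(t,s)$; indeed each term is controlled by $\|f_j'\|_\infty \lesssim 2^j\|f_j\|_\infty$ via the mean value theorem and the Bernstein inequality used in the proof of Lemma~\ref{besov_realisation}, and $\sum_{j} 2^j\|f_j\|_\infty < \infty$ by Theorem~\ref{besov_space_wavelet_characterisation}. To pass from this pointwise identity to a bound on $\|f^{[1]}\|_{\mathrm{m}_1}$, I would fix a finite submatrix $\{f^{[1]}(\lambda_j,\mu_k)\}_{j,k}$ indexed by disjoint finite sequences $\lambda,\mu$; its partial sums over scales converge entrywise, hence in norm, inside the finite-dimensional space $M_n(\mathbb{C})$, on which $\|\cdot\|_{\mathrm{m}_1}$ is a continuous norm. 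The triangle inequality then gives $\|\{f^{[1]}(\lambda_j,\mu_k)\}\|_{\mathrm{m}_1} \le |c| + \sum_{j\in\mathbb{Z}}\|f_j^{[1]}\|_{\mathrm{m}_1}$.

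Finally, Lemma~\ref{peller_fav_easy} bounds $\|f_j^{[1]}\|_{\mathrm{m}_1} \lesssim 2^j\|f_j\|_\infty$ for each $j$ (each $f_j$ being bounded since $\sum_j 2^j\|f_j\|_\infty<\infty$), while Theorem~\ref{besov_space_wavelet_characterisation} with $s=1$, $p=\infty$, $q=1$ identifies $\sum_{j\in\mathbb{Z}} 2^j\|f_j\|_\infty \approx \|f\|_{\dot{B}^1_{\infty,1}}$. Taking the supremum over all finite disjoint $\lambda,\mu$ then yields $\|f^{[1]}\|_{\mathrm{m}_1} \lesssim |c| + \|f\|_{\dot{B}^1_{\infty,1}} \lesssim \|f'\|_\infty + \|f\|_{\dot{B}^1_{\infty,1}}$. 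The only point demanding any care is the interchange of the sum over scales with the passage to the $\mathrm{m}_1$-norm, which is precisely why I restrict to finite submatrices where $\|\cdot\|_{\mathrm{m}_1}$ is an honest (hence continuous) norm; once that bookkeeping is in place, the conclusion is a direct assembly of the lemmas already established, and I expect this to be the only mild obstacle.
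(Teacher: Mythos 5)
Your proposal is correct and takes essentially the same route as the paper: the representation from Lemma~\ref{besov_realisation}, the triangle inequality for $\|\cdot\|_{\mathrm{m}_1}$, the single-scale bound of Lemma~\ref{peller_fav_easy}, and the wavelet characterisation of $\dot{B}^1_{\infty,1}$ from Theorem~\ref{besov_space_wavelet_characterisation}. The finite-submatrix argument you add to justify summing over scales inside the $\mathrm{m}_1$-norm is merely an explicit account of what the paper subsumes under ``the triangle inequality,'' so there is no substantive difference.
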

\begin{proof}
    We apply the representation of $f$ from Lemma \ref{besov_realisation}. We have
    \begin{equation*}
        f(t) = f(0)+ct+\sum_{j\in \mathbb{Z}} f_j(t)-f_j(0),\quad t \in \mathbb{R}
    \end{equation*}
    where $f_j$ is computed relative to a compactly supported $C^3$-wavelet.
    By the triangle inequality, the bound in Lemma \ref{besov_realisation} and Lemma \ref{peller_fav_easy}, we have
    $$
        \|f^{[1]}\|_{\mathrm{m}_1} \leq |c|+\sum_{j\in \mathbb{Z}} \|f_j^{[1]}\|_{\mathrm{m}_1} \lesssim \|f'\|_\infty+\|f\|_{\dot{B}^1_{\infty,1}}+\sum_{j\in \mathbb{Z}} 2^j\|f_j\|_\infty = \|f'\|_\infty+\|f\|_{\dot{B}^1_{\infty,1}}.
    $$
\end{proof}

\subsection{Sufficient conditions for a function to be $\mathcal{L}_p$-operator Lipschitz}\label{sufficency_section}
We now present sufficient conditions for a function to be $\mathcal{L}_p$-operator Lipschitz for $0 < p < 1$. The $p=1$ case has been covered by Corollary \ref{Peller_critereon}, however
the arguments in this subsection apply for $p=1$.
    
Recall that $\|\cdot\|_{\mathrm{m}_p}$ denotes the $\mathcal{L}_p$-bounded Schur multiplier norm. Our proofs are based on the following result.
\begin{theorem}\label{Cbeta_sufficiency}
    Let $0 < p \leq 1$. If $f \in C^\beta_c(\mathbb{R})$ is compactly supported, where $\beta > \frac{2}{p}$, then $\|f^{[1]}\|_{\mathrm{m}_p} < \infty$. 
\end{theorem}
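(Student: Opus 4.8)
The plan is to work directly with the elementary structure of the divided difference $f^{[1]}$ when $f$ is smooth and compactly supported, reducing everything to Fourier expansions on bounded squares. The only tools needed are: the $p$-triangle inequality \eqref{p-prop} for the $\mathrm{m}_p$-quasi-norm (valid for matrices indexed by arbitrary sets); the bound $\|a(t)b(s)\|_{\mathrm{m}_p}\leq\|a\|_\infty\|b\|_\infty$ (a symbol depending on a single variable has $\mathrm{m}_p$-norm at most its sup norm, combined with \eqref{algebra}); and the scaling relation $\|f_{(\lambda)}^{[1]}\|_{\mathrm{m}_p}=\lambda\|f^{[1]}\|_{\mathrm{m}_p}$, which holds for the same reason as \eqref{m_1_homogeneity} since dilating the index sets is a bijective relabelling. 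By this scaling and a translation I may assume $\operatorname{supp}f\subseteq(1,2)$; fix a bounded open interval $I\supseteq[1,2]$. The starting point is that $f^{[1]}$ is itself smooth: from $f^{[1]}(t,s)=\int_0^1 f'((1-u)s+ut)\,du$ one gets $\partial_t^a\partial_s^b f^{[1]}(t,s)=\int_0^1 u^a(1-u)^b f^{(a+b+1)}((1-u)s+ut)\,du$, so $f^{[1]}\in C^{\beta-1}(\mathbb{R}^2)$ with $\|f^{[1]}\|_{C^{\beta-1}(\mathbb{R}^2)}\lesssim\|f\|_{C^\beta(\mathbb{R})}$, and moreover $f^{[1]}(t,s)=0$ whenever $t,s\notin\operatorname{supp}f$.

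I would then split $\mathbb{R}^2$ into the disjoint product sets $I\times I$ (the ``central block''), $I\times(\mathbb{R}\setminus I)$ and $(\mathbb{R}\setminus I)\times I$ (the ``arms''), and $(\mathbb{R}\setminus I)\times(\mathbb{R}\setminus I)$, on which $f^{[1]}$ vanishes. Since every piece is a product set, decomposing an arbitrary finite set of rows and columns accordingly and applying \eqref{p-prop} gives $\|f^{[1]}\|_{\mathrm{m}_p}^p\leq\|f^{[1]}|_{I\times I}\|_{\mathrm{m}_p}^p+\sum_{\text{arms}}\|f^{[1]}|_{\text{arm}}\|_{\mathrm{m}_p}^p$, so it suffices to bound each term. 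On the arm $I\times(\sup I,\infty)$ we have $f^{[1]}(t,s)=f(t)/(t-s)$ with $t\in\operatorname{supp}f$ and $s>\sup I>2$, so $t-s$ is bounded away from $0$; cutting this arm into the rectangles $I\times(\sup I+2^\ell,\sup I+2^{\ell+1})$, $\ell\geq0$, together with $I\times(\sup I,\sup I+1)$, and rescaling each to a fixed size, one checks that $\tfrac1{t-s}$ is there a $C^\infty$ kernel whose rescaled version has all derivatives $O(1)$, hence (extend it to a compactly supported $C^\infty$ function on a doubled square and expand in a Fourier series with rapidly decaying coefficients) $\|\tfrac1{t-s}\big|_{\text{rectangle}}\|_{\mathrm{m}_p}\lesssim 2^{-\ell}$; multiplying by the single-variable function $f(t)$ costs a factor $\|f\|_\infty$, and $\sum_\ell(2^{-\ell}\|f\|_\infty)^p<\infty$. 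The remaining three arms are identical, so the arms contribute a finite amount.

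The core of the argument is the central block. On $\overline{I}\times\overline{I}$ the restriction $F:=f^{[1]}|_{\overline{I}\times\overline{I}}$ is $C^{\beta-1}$ but does not vanish on the boundary, so rather than periodise it I would first extend it, by a standard extension theorem followed by multiplication by a fixed smooth cut-off, to a function $G\in C^{\beta-1}_c(\mathbb{R}^2)$ that agrees with $F$ on $\overline{I}\times\overline{I}$, is supported in a slightly larger open square $J\times J$, and satisfies $\|G\|_{C^{\beta-1}(\mathbb{R}^2)}\lesssim\|f\|_{C^\beta}$. Periodising $G$ with period $|J|$ yields a $C^{\beta-1}$ function on the torus, so $\sum_{\mathbf k\in\mathbb{Z}^2}(1+|\mathbf k|)^{2s}|\widehat G_{\mathbf k}|^2\lesssim_s\|G\|_{C^{\beta-1}}^2$ for every $s<\beta-1$; I choose $s$ with $\tfrac2p-1<s<\beta-1$ (possible exactly because $\beta>\tfrac2p$), and since $p\leq1$ forces $s>1$, the Fourier series of $G$ converges absolutely and uniformly. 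Hence on every finite submatrix indexed inside $I\times I$ one may pass to the limit in the Fourier partial sums and use that $e^{2\pi i(k_1t+k_2s)/|J|}$ factors as a product of single-variable unimodular functions (so its $\mathrm{m}_p$-norm is $\leq1$) together with \eqref{p-prop} to get $\|F\|_{\mathrm{m}_p}^p\leq\sum_{\mathbf k\in\mathbb{Z}^2}|\widehat G_{\mathbf k}|^p$. Finally, by H\"older's inequality with exponents $\tfrac2p$ and $\tfrac2{2-p}$,
\[
\sum_{\mathbf k}|\widehat G_{\mathbf k}|^p\leq\Big(\sum_{\mathbf k}(1+|\mathbf k|)^{2s}|\widehat G_{\mathbf k}|^2\Big)^{p/2}\Big(\sum_{\mathbf k}(1+|\mathbf k|)^{-\frac{2sp}{2-p}}\Big)^{\frac{2-p}{2}},
\]
and the second series converges because $\tfrac{2sp}{2-p}>2$, which is precisely $s>\tfrac2p-1$.

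Thus the hypothesis $\beta>\tfrac2p$ enters in exactly one place and for a transparent reason: the ``$2$'' is the dimension of the lattice $\mathbb{Z}^2$ of Fourier modes of the two-variable kernel, the ``$\tfrac1p$'' comes from the $p$-summation forced by the $p$-triangle inequality, and the one derivative lost in passing from $f$ to $f^{[1]}$ produces the threshold $s>\tfrac2p-1$. (For $p=1$ this gives $\beta>2$, consistent with Proposition~\ref{schur_properties}.\eqref{C2_sufficiency}.) I expect the main difficulty to be organisational rather than analytic: one must take $I$ strictly larger than $\operatorname{supp}f$ so the arms keep a positive distance from the diagonal; one must keep every piece of the decomposition a genuine product set so that the submatrix lemma and \eqref{p-prop} apply; and one must resist splitting off a near-diagonal band of $f^{[1]}$ with a Toeplitz cut-off, as is natural when $p=1$, since for $p<1$ even a smooth Toeplitz cut-off fails to be an $\mathcal{L}_p$-Schur multiplier — this is exactly the phenomenon behind Lemma~\ref{toeplitz_is_not_schur} and the surrounding remarks.
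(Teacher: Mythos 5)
Your proposal is correct, but it takes a genuinely different route from the paper. The paper first proves the statement on the circle: writing $\frac{z^n-w^n}{z-w}=\sum_{k=0}^{n-1}z^k w^{n-k-1}$ and applying \eqref{p-prop} gives $\big\|\{\frac{z^n-w^n}{z-w}\}_{z,w\in\mathbb{T}}\big\|_{\mathrm{m}_p}\lesssim n^{1/p}$, and the Fourier decay $|\widehat h(n)|\lesssim(1+|n|)^{-\beta}$ for $h\in C^\beta(\mathbb{T})$ makes the resulting series $p$-summable exactly when $\beta>\frac2p$; the real-line case is then deduced via the Cayley transform, passing through the Lipschitz estimate for unitaries, back to self-adjoint operators, and finally through Lemma \ref{lp_lip_implies_lp_schur}, which returns the Schur bound in the sense of \eqref{schur_boundedness}. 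You instead stay on $\mathbb{R}^2$: a central square containing the diagonal, handled by exploiting the smoothness of $f^{[1]}$ itself (one derivative below $f$) together with a two-dimensional Fourier expansion of a periodised extension, plus off-diagonal arms handled by dyadic rectangles on which $1/(t-s)$ is smooth and small. The threshold $\beta>\frac2p$ comes out of the same $\ell_2$-versus-$\ell_p$ bookkeeping in both arguments; your $\mathbb{Z}^2$ lattice plays the role of the paper's factor $n^{1/p}$. What your route buys: it never leaves the line, avoids the Cayley transform and the operator-theoretic detour, and it bounds the full matrix $\{f^{[1]}(t,s)\}_{t,s\in\mathbb{R}}$ (with $f'$ on the diagonal), which is formally stronger than the disjoint-sequence conclusion the paper's proof extracts from Lemma \ref{lp_lip_implies_lp_schur}, and is the form in which the estimate is actually invoked later (e.g.\ in the proof of Theorem \ref{quasi_wavelet_bernstein}, where blocks meeting the diagonal occur). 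What the paper's route buys is brevity and alignment with Peller's circle estimate \eqref{circle_lipschitz}.

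One small point to tighten: on the first arm rectangle $I\times(\sup I,\sup I+1)$ the kernel $1/(t-s)$ by itself is unbounded near the corner $(\sup I,\sup I)$, so you cannot bound $\|f(t)/(t-s)\|_{\mathrm{m}_p}$ there by $\|f\|_\infty\,\|1/(t-s)\|_{\mathrm{m}_p}$ quite as literally written. Take the $t$-side of all arm rectangles to be $[1,2]$ (or $\mathrm{supp}\,f$) instead of $I$ --- legitimate, since the rows where $f(t)=0$ contribute nothing --- and then $s-t\geq\sup I-2>0$ on every rectangle, so your rescaled-derivative bounds hold uniformly in $\ell$. This is precisely the ``keep the arms away from the diagonal'' caveat you flag yourself, so it is bookkeeping rather than a gap.
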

The result should be compared with Theorem 9.6 of \cite{Birman-Solomyak-survey-1977}, which is very similar. The proof we give here is based on first
proving that if $\phi \in C^\beta(\mathbb{T})$, then $\{\phi^{[1]}(z,w)\}_{z,w \in \mathbb{T}}$ is an $\mathcal{L}_p$-bounded Schur multiplier. This is actually an immediate consequence
of \eqref{circle_lipschitz}, since $C^{\beta}(\mathbb{T}) \subset B^{\frac{1}{p}}_{\infty,p}(\mathbb{T})$ when $\beta > \frac{1}{p}$. 
If $\phi$ is a function of class $C^{\beta}(\mathbb{T})$, supported in a compact subset of $\mathbb{T}\setminus \{1\}$, then the Cayley transform 
sends $\phi$ to a compactly supported function of class $C^\beta$ on $\mathbb{R}$. We use the much stronger assumption that $\beta> \frac{2}{p}$ because
it suffices for our purposes and we can give an especially elementary proof. Ultimately, the same result with only $\beta>\frac{1}{p}$ follows from Theorem \ref{main_sufficient_condition}.

\begin{proof}[Proof of Theorem \ref{Cbeta_sufficiency}]
    Initially we prove the corresponding result on the circle $\mathbb{T}$. Note that for $n\in \mathbb{Z}$ we have
    \begin{equation*}
        \frac{z^n-w^n}{z-w} = \sum_{k=0}^{n-1} z^kw^{n-k-1},\quad z,w\in \mathbb{T}
    \end{equation*}
    and therefore the $p$-triangle inequality for the $\mathrm{m}_p$-quasi-norm implies
    \begin{equation*}
        \left\|\frac{z^n-w^n}{z-w}\right\|_{\mathrm{m}_p} \lesssim n^{\frac{1}{p}}
    \end{equation*}
    with a constant independent of $n$. If $h \in C^{\beta}(\mathbb{T})$, then the Fourier coefficients $\{\widehat{h}(n)\}_{n\in \mathbb{Z}}$ obey
    \begin{equation*}
        |\widehat{h}(n)|\lesssim (1+|n|)^{-\beta}.
    \end{equation*}
    See e.g. \cite[Theorem 3.2.9(b)]{Grafakos-1}.
    For all $z\neq w\in \mathbb{T}$ we have
    \begin{equation*}
        h^{[1]}(z,w) := \frac{h(z)-h(w)}{z-w} = \sum_{n\in \mathbb{Z}} \widehat{h}(n)\frac{z^n-w^n}{z-w}.
    \end{equation*}
    Using the $p$-triangle inequality for the $\mathrm{m}_p$-norm \eqref{p-prop}, it follows that
    \begin{equation*}
        \|h^{[1]}\|_{\mathrm{m}_p}^p \leq \sum_{n\in \mathbb{Z}} |\widehat{h}(n)|^{ p}\left\|\{\frac{z^n-w^n}{z-w}\}_{z,w\in \mathbb{T}}\right\|^{ p} \lesssim \sum_{n\in \mathbb{Z}} |n|(1+|n|)^{-\beta p}
    \end{equation*}
    Since $\beta > \frac{2}{p}$, this series converges and hence $h^{[1]}$ is a Schur multiplier of $\mathcal{L}_p$.
    
    Now let $f \in C^\beta_c(\mathbb{R})$, and consider the image under the Cayley transform,
    \begin{equation*}
        h(z) := f\left(i\frac{z+1}{z-1}\right).
    \end{equation*} 
    Since $f \in C^\beta_c(\mathbb{R})$ and the Cayley transform is Lipschitz on compact subsets of $\mathbb{R}$, it follows that $h \in C^\beta(\mathbb{T})$.
    Therefore $\|h^{[1]}\|_{\mathrm{m}_p} < \infty$, and that $h$ is $\mathcal{L}_p$-operator Lipschitz for differences of unitary operators in the sense
    \begin{equation*}
        \|h(U)-h(V)\|_p\lesssim \|U-V\|_p
    \end{equation*} 
    for all unitaries $U$ and $V$ such that $U-V \in \mathcal{L}_p$. If $A$ and $B$ are self-adjoint operators such that $A-B \in \mathcal{L}_p$, we define
    \begin{equation*}
        U = \frac{A+i}{A-i},\quad V = \frac{B+i}{B-i}.
    \end{equation*} 
    Then
    \begin{equation*}
        U-V = \frac{2i}{A-i}-\frac{2i}{B-i} = 2i(B-i)^{-1}(B-A)(A-i)^{-1} \in \mathcal{L}_p.
    \end{equation*}
    We also have $f(A) = h(U)$ and $f(B) = h(V)$. Therefore,
    \begin{equation*}
        \|f(A)-f(B)\|_p = \|h(U)-h(V)\|_p\lesssim \|U-V\|_p \lesssim \|(B-i)^{-1}\|_\infty\|(A-i)^{-1}\|_\infty\|A-B\|_p\lesssim \|A-B\|_p.
    \end{equation*} 
    It follows from Theorem \ref{lp_lip_implies_lp_schur} that $f^{[1]}$ is a Schur multiplier of $\mathcal{L}_p$.
\end{proof}
Note that with $p < 1$ we still have the following homogeneity result, identical to \eqref{m_1_homogeneity}:
\begin{equation}\label{schur_homogeneity}
    \|f^{[1]}_{(\lambda)}\|_{\mathrm{m}_p} = \lambda\|f^{[1]}\|_{\mathrm{m}_p}
\end{equation}
where $f_{(\lambda)}(t) := f(\lambda t)$.
The most important component of our proof of Theorem \ref{main_sufficient_condition} is as follows. Recall that we denote
\begin{equation*}
    p^\sharp = \frac{p}{1-p}.
\end{equation*}
\begin{theorem}\label{quasi_wavelet_bernstein}
    Let $\phi$, $\alpha$, $\lambda$ be as in Theorem \ref{elementary_estimate}, but now assume that $\phi \in C^\beta_c(\mathbb{R})$ where $\beta > \frac{2}{p}$. Let $0 < p \leq 1$. Then
    \begin{equation*}
        \|\phi_{\alpha,\lambda}^{[1]}\|_{\mathrm{m}_p} \lesssim \lambda \|\alpha\|_{\ell_{p^\sharp}}.
    \end{equation*}
\end{theorem}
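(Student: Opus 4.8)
The plan is to follow the architecture of the proof of Theorem~\ref{elementary_estimate}, with two systematic changes dictated by the failure of the triangle inequality: every appeal to the triangle inequality and to the ``supremum over disjoint blocks'' rule is replaced by the $p$-triangle inequality~\eqref{p-prop} and by Lemma~\ref{block_diagonal_formula}, and the single near-diagonal/far-diagonal splitting is refined into a dyadic decomposition of the far-diagonal region. By the homogeneity~\eqref{schur_homogeneity} we may take $\lambda=1$; write $\phi_\alpha:=\phi_{\alpha,1}$. Fix a smooth cutoff $\rho$ equal to $1$ near $0$ and supported in $(-1,1)$, together with a dyadic partition $1-\rho(u)=\sum_{l\ge 0}\theta_l(u)$ in which $\theta_l$ is supported in $\{|u|\sim 2^l\}$ and, for all large $l$, $\theta_l(u)=\theta(2^{-l}u)$ for one fixed bump $\theta$ supported away from $0$. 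Then $\phi_\alpha^{[1]}=A+\sum_{l\ge 0}B_l$ with $A(t,s):=\phi_\alpha^{[1]}(t,s)\rho(t-s)$ and $B_l(t,s):=\phi_\alpha^{[1]}(t,s)\theta_l(t-s)$, and since $\sum_{l\ge0}2^{-lp}<\infty$ it suffices, by~\eqref{p-prop}, to prove $\|A\|_{\mathrm{m}_p}\lesssim\|\alpha\|_{\ell_{p^\sharp}}$ and $\|B_l\|_{\mathrm{m}_p}\lesssim 2^{-l}\|\alpha\|_{\ell_{p^\sharp}}$.

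For the near-diagonal term $A$ I proceed exactly as in Theorem~\ref{elementary_estimate}: $A$ is supported in the unit strip $|t-s|<1$, so $A=\sum_{|j|\le 1}A^{(j)}$ where $A^{(j)}$ is supported in $\bigsqcup_n [n,n+1)\times[n+j,n+j+1)$ and hence has the generalised block-diagonal form of Lemma~\ref{block_diagonal_formula}. Since $\phi$ is compactly supported, only $O(1)$ translates $\phi(\cdot-k)$ are non-zero on a given unit block, so the block of $A^{(j)}$ indexed by $n$ equals the restriction of $\rho(t-s)\sum_{|k-n|\le N}\alpha_k\phi^{[1]}(t-k,s-k)$; by submultiplicativity~\eqref{algebra}, translation invariance, the restriction property and~\eqref{p-prop}, its $\mathrm{m}_p$-norm is at most a constant multiple of $\|\phi^{[1]}\|_{\mathrm{m}_p}\,(\sum_{|k-n|\le N}|\alpha_k|^p)^{1/p}$. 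Here $\|\phi^{[1]}\|_{\mathrm{m}_p}<\infty$ by Theorem~\ref{Cbeta_sufficiency} --- exactly where the hypothesis $\beta>\frac{2}{p}$ is consumed --- and the $\mathrm{m}_p$-norm of $\rho(t-s)$ restricted to a bounded box is finite uniformly in $n$: extending $\rho|_{[-2,2]}$ periodically gives a smooth periodic function, and summing the rank-one contributions of its rapidly decreasing Fourier series via~\eqref{p-prop} bounds it. Lemma~\ref{block_diagonal_formula} then gives $\|A^{(j)}\|_{\mathrm{m}_p}^{p^\sharp}\lesssim\sum_n(\sum_{|k-n|\le N}|\alpha_k|^p)^{p^\sharp/p}$, and since $p^\sharp/p=(1-p)^{-1}\ge1$ with the inner sum having boundedly many terms, the power-mean inequality bounds the right-hand side by $\lesssim\sum_n\sum_{|k-n|\le N}|\alpha_k|^{p^\sharp}\lesssim\|\alpha\|_{\ell_{p^\sharp}}^{p^\sharp}$. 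Summing over $|j|\le 1$ yields $\|A\|_{\mathrm{m}_p}\lesssim\|\alpha\|_{\ell_{p^\sharp}}$.

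For the far-diagonal pieces $B_l$ (the finitely many $l$ for which $\theta_l$ is not a dilate of $\theta$ are handled by the argument used for $A$, since each then involves only a bounded $(t,s)$-region) I first extract the factor $(t-s)^{-1}$ on $\mathrm{supp}\,\theta_l$, writing $B_l=2^{-l}(\phi_\alpha(t)-\phi_\alpha(s))\mu(2^{-l}(t-s))$ with $\mu(v):=\theta(v)/v$ a single fixed smooth bump supported in $\{|v|\sim1\}$. By~\eqref{p-prop} and the transpose-invariance of $\|\cdot\|_{\mathrm{m}_p}$, it suffices to bound the $\mathrm{m}_p$-norm of $(t,s)\mapsto\phi_\alpha(t)\mu(2^{-l}(t-s))$ by $\lesssim\|\alpha\|_{\ell_{p^\sharp}}$ uniformly in $l$. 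Localising $t$ to the dyadic intervals $I^{(l)}_n:=[n2^l,(n+1)2^l)$, for $t\in I^{(l)}_n$ the kernel is supported in $O(1)$ neighbouring intervals $I^{(l)}_{n+d}$ in the $s$-variable, so it splits into $O(1)$ generalised block-diagonal matrices (one per shift $d$) with blocks indexed by $n$. On the block $I^{(l)}_n\times I^{(l)}_{n+d}$, submultiplicativity bounds the $\mathrm{m}_p$-norm by $\|\phi_\alpha\|_{L_\infty(I^{(l)}_n)}$ times the $\mathrm{m}_p$-norm of $\mu(2^{-l}(t-s))$ on that box; the latter equals, by dilation-invariance of the continuous-variable $\mathrm{m}_p$-norm, the $\mathrm{m}_p$-norm of $\mu(t-s)$ on a bounded box and is $O(1)$ uniformly in $l,n,d$ by the same periodisation argument as above, while $\|\phi_\alpha\|_{L_\infty(I^{(l)}_n)}\lesssim\max_{k\in J_n}|\alpha_k|$ for a window $J_n$ of length $\sim 2^l$, the $\{J_n\}$ overlapping with multiplicity $O(1)$. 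Lemma~\ref{block_diagonal_formula} then gives, for each $d$, a bound $\lesssim(\sum_n(\max_{k\in J_n}|\alpha_k|)^{p^\sharp})^{1/p^\sharp}\le(\sum_n\sum_{k\in J_n}|\alpha_k|^{p^\sharp})^{1/p^\sharp}\lesssim\|\alpha\|_{\ell_{p^\sharp}}$, with constant independent of $l$; summing the $O(1)$ shifts completes $\|B_l\|_{\mathrm{m}_p}\lesssim 2^{-l}\|\alpha\|_{\ell_{p^\sharp}}$.

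The main obstacle, and the reason Peller's $p=1$ argument cannot be transported verbatim, is precisely the far-diagonal region: for $p<1$ the globally-Toeplitz kernels appearing in the proof of Theorem~\ref{elementary_estimate} are \emph{not} $\mathcal{L}_p$-bounded Schur multipliers (this is the content of Lemma~\ref{toeplitz_is_not_schur}), so one cannot peel off a single off-diagonal factor as in the $p=1$ case. The dyadic decomposition together with the localisation of $t$ to intervals comparable to the separation $2^l$ circumvents this: on each piece the residual Toeplitz factor lives on a \emph{bounded} box, where periodisation restores $\mathrm{m}_p$-boundedness, and the gain $2^{-l}$ coming from $(t-s)^{-1}$ makes $\sum_l$ converge for every $p>0$. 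I expect the only genuinely delicate points to be the verification that the periodised kernels have $\mathrm{m}_p$-norm bounded uniformly in the scale $l$, and the bookkeeping of overlap multiplicities needed to pass from a $p^\sharp$-sum of block norms to $\|\alpha\|_{\ell_{p^\sharp}}$; everything else is routine. Since all constants are uniform for $0<p\le 1$, setting $p=1$ (so $p^\sharp=\infty$ and Lemma~\ref{block_diagonal_formula} degenerates to a supremum over blocks) recovers Theorem~\ref{elementary_estimate}.
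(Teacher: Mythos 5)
Your proposal is correct, and while your treatment of the near-diagonal part essentially reproduces the paper's argument (unit blocks along the strip $|t-s|<1$, submultiplicativity, Theorem~\ref{Cbeta_sufficiency} to consume $\beta>\tfrac{2}{p}$, and the $\ell_{p^\sharp}$-summation of block norms from Lemma~\ref{block_diagonal_formula}), your handling of the off-diagonal region is genuinely different. The paper cuts along $|\lfloor t\rfloor-\lfloor s\rfloor|>R$ with $R=2^{3+\frac2p}$, expands $\frac1{t-s}$ as a geometric series around $\frac1{\lfloor t\rfloor-\lfloor s\rfloor}$ (Lemma~\ref{GH lemma}), and absorbs the integer Toeplitz factor, which is only $\mathrm{m}_1$-bounded, against the $\ell_{p^\sharp}$ diagonal via the H\"older-type inequality~\eqref{left_multiplication} together with a Plancherel bound (Lemma~\ref{vanishing_toeplitz_lemma}) and the transference Lemma~\ref{push_to_floor}, which rests on the automatic complete boundedness result of the appendix. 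You instead decompose dyadically in $|t-s|\sim 2^l$, factor out $2^{-l}$, and localise to blocks of side $2^l$ matched to the separation, so that the residual Toeplitz kernel $\mu(2^{-l}(t-s))$ lives on a bounded box where, by dilation invariance and periodisation into a smooth periodic function with rapidly decaying Fourier coefficients, it is $\mathrm{m}_p$-bounded uniformly in the scale; Lemma~\ref{block_diagonal_formula} with windows $J_n$ of bounded overlap then returns $\|\alpha\|_{\ell_{p^\sharp}}$, and $\sum_l 2^{-lp}<\infty$ closes the argument. What each route buys: the paper disposes of the entire far region in one stroke with explicit constants, but needs the floor-function bookkeeping and Theorem~\ref{acb}; your route avoids both, at the price of an extra dyadic summation and overlap counting, and it makes transparent why Lemma~\ref{toeplitz_is_not_schur} is not an obstruction (the Toeplitz factor is only ever used on bounded boxes). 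Two points you should spell out when writing this up, though both are routine and consistent with the paper's own level of detail: the uniform-in-$l$ periodisation bound $\bigl\|\mu(t-s)\chi_{[0,1)}(t)\chi_{[d,d+1)}(s)\bigr\|_{\mathrm{m}_p}<\infty$ via the $p$-triangle inequality applied to the Fourier modes $e^{i\xi t}e^{-i\xi s}$, and the passage from bounds on $A$ and each $B_l$ to the infinite sum, which follows because any finite submatrix has bounded $|t_i-s_j|$ and therefore meets only finitely many dyadic pieces.
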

Theorem \ref{quasi_wavelet_bernstein} generalises Theorem \ref{elementary_estimate}, and our proof is similar. We first record some useful properties of Schur multipliers
of $\mathcal{L}_p$. 

\begin{lemma}\label{push_to_floor}
    Let $\phi:\mathbb{Z}^2\to \mathbb{C}$. Then
    \begin{equation*}
        \|\{\phi(\lfloor t\rfloor,\lfloor s\rfloor)\}_{t,s\in \mathbb{R}}\|_{\mathrm{m}_p} = \|\{\phi(j,k)\}_{j,k\in\mathbb{Z}}\|_{\mathrm{m}_p}.
    \end{equation*}
\end{lemma}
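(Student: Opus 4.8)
The plan is to prove the two inequalities separately. The bound $\|\{\phi(j,k)\}_{j,k\in\mathbb{Z}}\|_{\mathrm{m}_p}\le\|\{\phi(\lfloor t\rfloor,\lfloor s\rfloor)\}_{t,s\in\mathbb{R}}\|_{\mathrm{m}_p}$ is immediate: the matrix $\{\phi(j,k)\}_{j,k\in\mathbb{Z}}$ is the submatrix of $\{\phi(\lfloor t\rfloor,\lfloor s\rfloor)\}_{t,s\in\mathbb{R}}$ obtained by restricting the row and column indices to $\mathbb{Z}$ (where $\lfloor t\rfloor=t$), and passing to a submatrix cannot increase the $\mathrm{m}_p$-norm.

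For the reverse inequality it suffices, by the definition of the $\mathrm{m}_p$-norm for matrices indexed by arbitrary sets, to bound $\|\{\phi(\lfloor t\rfloor,\lfloor s\rfloor)\}_{t\in T_0,\,s\in S_0}\|_{\mathrm{m}_p}$ uniformly over finite $T_0,S_0\subset\mathbb{R}$. I would fix such $T_0,S_0$, set $J=\{\lfloor t\rfloor:t\in T_0\}$ and $K=\{\lfloor s\rfloor:s\in S_0\}$, and observe that $M_0:=\{\phi(\lfloor t\rfloor,\lfloor s\rfloor)\}_{t\in T_0,s\in S_0}$ is precisely the submatrix $A_0:=\{\phi(j,k)\}_{j\in J,k\in K}$ of $\{\phi(j,k)\}_{j,k\in\mathbb{Z}}$ with each row and each column repeated according to the fibres of the maps $t\mapsto\lfloor t\rfloor$ and $s\mapsto\lfloor s\rfloor$.

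The heart of the argument is to compare $M_0$ with $A_0$ using Lemma \ref{rank_one_suffices}, which lets me compute $\|M_0\|_{\mathrm{m}_p}$ as the supremum of $\|M_0\circ(\xi\otimes\eta)\|_p$ over unit vectors $\xi\in\ell_2(T_0)$, $\eta\in\ell_2(S_0)$. Restricted to the block of rows $\{t:\lfloor t\rfloor=j\}$ and columns $\{s:\lfloor s\rfloor=k\}$, the matrix $M_0\circ(\xi\otimes\eta)$ equals $\phi(j,k)$ times the rank-one matrix $(\xi|_{\{\lfloor t\rfloor=j\}})\otimes(\eta|_{\{\lfloor s\rfloor=k\}})$. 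Normalising these restricted vectors produces orthonormal families whose linear spans determine isometric embeddings $V_1:\ell_2(J)\to\ell_2(T_0)$ and $V_2:\ell_2(K)\to\ell_2(S_0)$, and a direct computation identifies $M_0\circ(\xi\otimes\eta)=V_1\bigl(A_0\circ(\rho\otimes\gamma)\bigr)V_2^*$, where $\rho_j=\|\xi|_{\{\lfloor t\rfloor=j\}}\|_{\ell_2}$ and $\gamma_k=\|\eta|_{\{\lfloor s\rfloor=k\}}\|_{\ell_2}$, so that $\|\rho\|_{\ell_2}=\|\xi\|_{\ell_2}\le1$ and $\|\gamma\|_{\ell_2}=\|\eta\|_{\ell_2}\le1$. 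Then by the isometric invariance of the $\mathcal{L}_p$-quasi-norm,
$$
    \|M_0\circ(\xi\otimes\eta)\|_p=\|A_0\circ(\rho\otimes\gamma)\|_p\le\|A_0\|_{\mathrm{m}_p}\,\|\rho\|_{\ell_2}\|\gamma\|_{\ell_2}\le\|A_0\|_{\mathrm{m}_p}\le\|\{\phi(j,k)\}_{j,k\in\mathbb{Z}}\|_{\mathrm{m}_p},
$$
and taking the supremum over $\xi,\eta$ and then over $T_0,S_0$ finishes the proof.

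The one place that needs care is the verification of the identity $M_0\circ(\xi\otimes\eta)=V_1(A_0\circ(\rho\otimes\gamma))V_2^*$, together with the degenerate case in which some $\rho_j$ or $\gamma_k$ vanishes: there the corresponding block of rows or columns of $M_0\circ(\xi\otimes\eta)$ is identically zero, and the embedding $V_1$ or $V_2$ may be completed arbitrarily on that coordinate, so the identity continues to hold. Everything else is routine bookkeeping with the block structure.
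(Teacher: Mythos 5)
Your proof is correct, but it takes a different route from the paper's. The paper first pads the two finite sequences so that every interval $[l,l+1)$ contains the same number $n$ of points of each, identifies the resulting matrix as $\{\phi(l,m)\}\otimes\mathrm{id}_{M_n(\mathbb{C})}$ (tensor with the all-ones matrix), and then invokes the automatic complete boundedness theorem of Aleksandrov and Peller (Theorem \ref{acb}, proved in the appendix via Smith's isometry argument), finishing with the submatrix monotonicity of $\|\cdot\|_{\mathrm{m}_p}$. You instead work with arbitrary finite index sets directly, with no padding, and in place of citing Theorem \ref{acb} you inline the key mechanism of its proof: using Lemma \ref{rank_one_suffices} you reduce to rank-one Schur multiplications, group the coordinates by the fibres of $t\mapsto\lfloor t\rfloor$, and conjugate by the isometries built from the normalised fibre restrictions of $\xi$ and $\eta$, which is exactly Smith's trick adapted to blocks of unequal size. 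What your approach buys is self-containedness at this point of the paper (no appeal to the appendix) and a slightly more general statement, since repeated rows and columns with arbitrary multiplicities are handled in one stroke; what the paper's approach buys is brevity, outsourcing the analytic content to a quotable theorem. Two small bookkeeping points in your write-up: with the paper's convention $(\xi\otimes\eta)_{j,k}=\xi_j\eta_k$ (no conjugate), the identity should read $M_0\circ(\xi\otimes\eta)=V_1\bigl(A_0\circ(\rho\otimes\gamma)\bigr)V_2^{*}$ with $V_2$ built from the \emph{conjugated} restrictions of $\eta$ (or with a transpose in place of the adjoint), which changes nothing in the norm estimate since $\mathcal{L}_p$ quasi-norms are unaffected; and for the final bound one only needs $\|V_1 X V_2^*\|_p\leq\|X\|_p$ for contractions $V_1,V_2$, so the degenerate case $\rho_j=0$ or $\gamma_k=0$ requires no completion of the isometry at all.
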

\begin{proof}
    Let $\{s_j\}_{j=1}^K$ and $\{t_k\}_{k=1}^K$ be finite subsets of $\mathbb{R}$. Assume that 
    \begin{equation*}
        \max_{1\leq j,k\leq K} \{|s_j|,|t_k|\} \leq N\in \mathbb{N}. 
    \end{equation*}
    By enlarging $K$ and adding additional points to the sequences $\{s_j\}_{j=1}^K$ and $\{t_k\}_{k=1}^K$ if necessary, we assume that there exists $n > 0$ such that for all $-N\leq l< N$ we have
    \begin{equation*}
        n = |\{s_j\}_{j=1}^K\cap [l,l+1)| = |\{t_k\}_{k=1}^K\cap [l,l+1)|.
    \end{equation*}
    We now relabel the sequences as $s_{j,l}$ and $t_{k,m}$, where $1\leq j,k\leq n$ and $-N\leq l,m < N$ such that
    \begin{equation*}
        s_{j,l},\, t_{k,l}\in [l,l+1),\quad -N\leq l<N.
    \end{equation*}
    Denote by $\mathrm{id}_{M_n(\mathbb{C})}$ the $n\times n$ matrix of ones. We have
    \begin{equation*}
        \{\phi(\lfloor s_{j,l}\rfloor ,\lfloor t_{k,m}\rfloor)\}_{1\leq j,k\leq n,\,-N\leq l,m< N} = \{\phi(l,m)\}_{-N\leq l,m< N} \otimes \mathrm{id}_{M_n(\mathbb{C})}.
    \end{equation*}
    Due to the automatic complete boundedness property (Theorem \ref{acb}), it follows that
    \begin{equation*}
        \|\{\phi(\lfloor s_{j,l}\rfloor ,\lfloor t_{k,m}\rfloor)\}_{1\leq j,k\leq n,\,-N\leq l,m< N}\|_{\mathrm{m}_p} = \|\{\phi(l,m)\}_{-N\leq l,m< N}\|_{\mathrm{m}_p} \leq \|\{\phi(l,m)\}_{l,m\in \mathbb{Z}}\|_{\mathrm{m}_p}.
    \end{equation*}
    Since adding rows and columns to a matrix can only increase the $\mathrm{m}_p$-norm, it follows that for arbitrary sequences $\{s_{j}\}_{j=1}^K$ and $\{t_k\}_{k=1}^K$ we have
    \begin{equation*}
        \|\{\phi(\lfloor s_j\rfloor,\lfloor t_k\rfloor)\}_{1\leq j,k\leq K}\|_{\mathrm{m}_p} \leq \|\{\phi(l,m)\}_{l,m\in \mathbb{Z}}\|_{\mathrm{m}_p}.
    \end{equation*}
    Taking the supremum over all sequences yields
    \begin{equation*}
        \|\{\phi(\lfloor s\rfloor, \lfloor t\rfloor)\}_{t,s\in \mathbb{R}}\|_{\mathrm{m}_p} \leq \|\{\phi(l,m)\}_{l,m\in \mathbb{Z}}\|_{\mathrm{m}_p}.
    \end{equation*}
    The reverse inequality is trivial.
\end{proof}

One further property we need is that if $\lambda = \{\lambda_j\}_{j\in \mathbb{Z}}$ is a scalar sequence and $A = \{A_{j,k}\}_{j,k\in \mathbb{Z}}$ is a matrix then
\begin{equation}\label{left_multiplication}
    \|\{\lambda_jA_{j,k}\}_{j,k\in \mathbb{Z}}\|_{\mathrm{m}_p} \leq \|\{\lambda_j\}_{j\in \mathbb{Z}}\|_{\ell_{p^\sharp}}\|A\|_{\mathrm{m}_1}. 
\end{equation}
Indeed, if $\Lambda$ denotes the diagonal matrix with entries $\{\lambda_j\}_{j\in \mathbb{Z}}$, then by H\"older's inequality \eqref{operator_holder_inequality} and Lemma \ref{rank_one_suffices} we have
\begin{align*}
    \|\{\lambda_jA_{j,k}\}_{j,k\in \mathbb{Z}}\|_{\mathrm{m}_p} &= \sup_{\|\xi\|,\|\eta\|\leq 1} \|\{\lambda_jA_{j,k}\xi_j\eta_k\}_{j,k\in \mathbb{Z}}\|_p\\
                                                    &= \sup_{\|\xi\|,\|\eta\|\leq 1} \|\Lambda (A\circ (\xi\otimes \eta))\|_{p}\\
                                                    &\leq \|\Lambda\|_{p^\sharp}\sup_{\|\xi\|,\|\eta\|\leq 1} \|A\circ (\xi\otimes \eta)\|_1\\
                                                    &= \|\Lambda\|_{p^\sharp}\|A\|_{\mathrm{m}_1}.
\end{align*}
Since $\|\Lambda\|_{p^\sharp} = \|\{\lambda_j\}_{j\in \mathbb{Z}}\|_{\ell_{p^\sharp}}$, this proves \eqref{left_multiplication}.

Our method of proof of Theorem \ref{quasi_wavelet_bernstein} is conceptually similar to that of Theorem \ref{elementary_estimate}, but in place of the function $(t,s)\mapsto 1-\rho(t-s)$
it is more convenient to use the following discretised version:
\begin{equation*}
    (t,s)\mapsto \chi_{|\lfloor t\rfloor -\lfloor s\rfloor|> R}
\end{equation*}
where $R>1$ is sufficiently large, depending on $p$.
\begin{lemma}\label{vanishing_toeplitz_lemma}
    Let $\alpha = \{\alpha_k\}_{k\in \mathbb{Z}}$ be a scalar sequence, and let $g_{\alpha}$ be the function
    \begin{equation*}
        g_{\alpha}(t) = \sum_{k\in \mathbb{Z}} \alpha_k\chi_{[k,k+1)}(t).
    \end{equation*}
    Then for all $n\geq 1$ and $R > 1$ we have
    \begin{equation*}
        \|\{g_{\alpha}(t)\frac{\chi_{|\lfloor t\rfloor -\lfloor s\rfloor| > R}}{(\lfloor t\rfloor -\lfloor s\rfloor)^n}\}_{t,s\in \mathbb{R}}\|_{\mathrm{m}_p} \leq \left(\frac{2}{R}\right)^{n/2}\|\alpha\|_{\ell_{p^\sharp}}.
    \end{equation*}
\end{lemma}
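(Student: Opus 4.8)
The plan is to pass to a bilateral Toeplitz matrix indexed by $\mathbb{Z}$ and then estimate its $\mathrm{m}_1$-norm through its Fourier symbol. Since $g_{\alpha}(t)=\alpha_{\lfloor t\rfloor}$, the matrix in the statement has $(t,s)$-entry $\alpha_{\lfloor t\rfloor}\,c_{\lfloor t\rfloor-\lfloor s\rfloor}$, where $c_m:=\frac{\chi_{|m|>R}}{m^n}$ for $m\in\mathbb{Z}$. Applying Lemma \ref{push_to_floor} to the function $(j,k)\mapsto\alpha_jc_{j-k}$ shows that its $\mathrm{m}_p$-norm equals $\|\{\alpha_jc_{j-k}\}_{j,k\in\mathbb{Z}}\|_{\mathrm{m}_p}$, and then \eqref{left_multiplication} bounds this by $\|\alpha\|_{\ell_{p^\sharp}}\,\|\{c_{j-k}\}_{j,k\in\mathbb{Z}}\|_{\mathrm{m}_1}$. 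Thus everything reduces to the purely scalar estimate
$$
    \big\|\{c_{j-k}\}_{j,k\in\mathbb{Z}}\big\|_{\mathrm{m}_1}\leq\Big(\frac{2}{R}\Big)^{n/2},
$$
which no longer involves $\alpha$ or $p$.

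For this I would use that $c=\{c_m\}_{m\in\mathbb{Z}}$ is square-summable, hence is the sequence of Fourier coefficients of $g(\theta)=\sum_{|m|>R}m^{-n}e^{im\theta}\in L_2(\mathbb{T})$. Representing $c_{j-k}=\frac{1}{2\pi}\int_{-\pi}^{\pi}g(\theta)e^{-ij\theta}e^{ik\theta}\,d\theta$ and using that each kernel $\{e^{-ij\theta}e^{ik\theta}\}_{j,k}$, being the Schur product of two one-variable kernels, has $\mathrm{m}_1$-norm at most $1$ (Proposition \ref{schur_properties}.(i) and (iv)), the triangle inequality for the norm $\|\cdot\|_{\mathrm{m}_1}$ yields the discrete Toeplitz bound $\|\{c_{j-k}\}\|_{\mathrm{m}_1}\leq\|g\|_{L_1(\mathbb{T})}$, exactly as in Proposition \ref{schur_properties}.(ii). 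By Cauchy--Schwarz and Parseval, $\|g\|_{L_1(\mathbb{T})}\leq\|g\|_{L_2(\mathbb{T})}=\|c\|_{\ell_2}=\big(2\sum_{m>R}m^{-2n}\big)^{1/2}$; comparing $\sum_{m>R}m^{-2n}$ with $\int_{R}^{\infty}x^{-2n}\,dx$ and treating the leading term separately gives $2\sum_{m>R}m^{-2n}\leq(2/R)^{n}$ whenever $n\geq2$, which settles that range.

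I expect the case $n=1$ to be the main obstacle, because there the Cauchy--Schwarz estimate is genuinely insufficient: $\|g\|_{L_2(\mathbb{T})}$ has the correct order $R^{-1/2}$ but with a constant exceeding $\sqrt{2}$ for moderate $R$, so one cannot conclude from the size of $|c_m|$ alone. The remedy is to exploit the oscillation of the symbol, i.e.\ to estimate $\|g\|_{L_1(\mathbb{T})}$ directly. Summation by parts on $\sum_{m>R}\frac{\sin m\theta}{m}$ together with Abel's inequality $\big|\sum_{R<m\leq N}\sin m\theta\big|\leq|\sin(\theta/2)|^{-1}$ gives $|g(\theta)|\lesssim(R\,|\sin(\theta/2)|)^{-1}$, while $|g(\theta)|$ stays bounded near $\theta=0$; splitting the integral at $|\theta|\sim R^{-1}$ then produces the decay $\|g\|_{L_1(\mathbb{T})}\lesssim R^{-1}$ (removing the spurious logarithmic factor requires the classical $L_1$-integrability theory for sine series with monotone coefficients), and $R^{-1}\leq(2/R)^{1/2}$ for $R>1$. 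Combining the two ranges of $n$ gives the displayed inequality, and hence the lemma.
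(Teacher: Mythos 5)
Your reduction is the same as the paper's: Lemma \ref{push_to_floor} together with \eqref{left_multiplication} reduces the lemma to the scalar Toeplitz bound $\|\{c_{j-k}\}_{j,k\in\mathbb{Z}}\|_{\mathrm{m}_1}\leq(2/R)^{n/2}$ with $c_m=\chi_{|m|>R}m^{-n}$, and your treatment of $n\geq 2$ (symbol, Cauchy--Schwarz, Parseval, integral comparison) is correct. The genuine gap is in your $n=1$ case. The logarithm you hope to remove is not spurious: the symbol $g(\theta)=\sum_{|m|>R}m^{-1}e^{im\theta}=2i\sum_{m>R}m^{-1}\sin(m\theta)$ is the tail of the sawtooth series, and writing $\sum_{m>R}m^{-1}\sin(m\theta)=\int_{(R+\frac12)\theta}^{\infty}\frac{\sin u}{u}\,du+O(R^{-1})$ uniformly on $(0,\pi)$, with $\int_{x}^{\infty}\frac{\sin u}{u}\,du=\frac{\cos x}{x}+O(x^{-2})$, one finds $\|g\|_{L_1(\mathbb{T})}\asymp R^{-1}\log R$ for large $R$; so no refinement can give $\|g\|_{L_1}\lesssim R^{-1}$. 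The classical theory of sine series with monotone coefficients does not rescue this: the truncated coefficients $\chi_{m>R}/m$ are not monotone (they jump from $0$ to $(R+1)^{-1}$), and that theory concerns membership in $L_1$, not a bound of the strength you need. Finally, even a correct estimate $\|g\|_{L_1}\leq CR^{-1}\log R$ with an unspecified absolute constant cannot produce the explicit constant $(2/R)^{1/2}$ for every $R>1$, which is what the lemma (and its use with $R=2^{3+2/p}$ in Lemma \ref{GH lemma}) asserts.

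The fix is short, and is essentially your own $n\geq 2$ argument: for $n=1$ simply use Cauchy--Schwarz and Plancherel, $\|g\|_{L_1}\leq\bigl(2\sum_{m>R}m^{-2}\bigr)^{1/2}$, which is exactly what the paper does. The near-integer issue that worried you is real but marginal: for $R$ just below an integer (e.g.\ $R=2.9$) one has $\sum_{m>R}m^{-2}>1/R$ by a factor $1+O(R^{-1})$, and the paper's own displayed inequality glosses this; it is harmless, since any absolute constant in place of $2$ works in Lemma \ref{GH lemma} after enlarging $R$. The other structural difference from the paper is that you handle each $n$ through its own symbol, whereas the paper never does: since $\chi_{|m|>R}^{\,n}=\chi_{|m|>R}$, the matrix $\{\chi_{|j-k|>R}(j-k)^{-n}\}$ is the $n$-fold Schur product of $\{\chi_{|j-k|>R}(j-k)^{-1}\}$ with itself, so by \eqref{algebra} its $\mathrm{m}_1$-norm is at most the $n$-th power of the $n=1$ bound --- this is where the exponent $n/2$ comes from, and it reduces the whole lemma to the single Cauchy--Schwarz estimate. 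Replacing your $n=1$ discussion by either of these observations completes your proof.
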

\begin{proof}
    Note that $g_{\alpha}(t) = \alpha_{\lfloor t\rfloor}$. Using Lemma \ref{push_to_floor}, it suffices to prove that
    \begin{equation*}
        \|\{\alpha_j\frac{\chi_{|j-k|> R}}{(j-k)^n}\}_{j,k\in \mathbb{Z}}\|_{\mathrm{m}_p} \leq \left(\frac{2}{R}\right)^{\frac{n}{2}}\|\alpha\|_{\ell_{p^\sharp}}.
    \end{equation*}
    In fact, via \eqref{left_multiplication}, and repeatedly using \eqref{algebra}, it only suffices to check that
    \begin{equation*}
        \|\{\frac{\chi_{|j-k|> R}}{j-k}\}_{j,k\in \mathbb{Z}}\|_{\mathrm{m}_1} \leq \left(\frac{2}{R}\right)^{\frac{1}{2}}.
    \end{equation*}
    This is a Toeplitz matrix. Hence, by Proposition \ref{schur_properties}.\eqref{toeplitz_form} we have
    \begin{equation*}
        \|\{\frac{\chi_{|j-k| > R}}{j-k}\}_{j,k\in \mathbb{Z}}\|_{\mathrm{m}_1} \leq \|f\|_{L_1[0,1]}
    \end{equation*}
    where $f$ is the function
    \begin{equation*}
        f(t) = \sum_{|j|>R} \frac{1}{j}e^{2\pi i t j}.
    \end{equation*}
    Bounding the $L_1$-norm of $f$ by the $L_2$-norm, it follows from Plancherel's identity that
    \begin{equation*}
        \|\{\frac{\chi_{|j-k|> R}}{j-k}\}_{j,k\in \mathbb{Z}}\|_{\mathrm{m}_1} \leq \left(\sum_{|j|> R} \frac{1}{j^2}\right)^{\frac{1}{2}} \leq \left(\frac{2}{R}\right)^{\frac{1}{2}}.
    \end{equation*}
    This completes the proof.        
\end{proof}

Now we set $R=2^{3+\frac2p}$, and defining $g_{\alpha}$ as in Lemma \ref{vanishing_toeplitz_lemma} we define
$$G_{\alpha}(t,s):=g_{\alpha}(t)\cdot \frac{\chi_{|\lfloor s\rfloor-\lfloor t\rfloor|> R}}{t-s},$$
$$H_{\alpha}(t,s):=g_{\alpha}(s)\cdot \frac{\chi_{|\lfloor s\rfloor-\lfloor t\rfloor|> R}}{t-s}.$$

\begin{lemma}\label{GH lemma} In the notation above, for every $\alpha\in \ell_{p^{\sharp}},$ we have
$$\|G_{\alpha}\|_{\mathrm{m}_p},\|H_{\alpha}\|_{\mathrm{m}_p}\leq c_p\|\alpha\|_{\ell_{p^{\sharp}}}$$
where $c_p$ depends only on $p$.
\end{lemma}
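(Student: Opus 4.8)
The plan is to expand the Toeplitz-type factor $\frac{1}{t-s}$ as a power series in $\frac{1}{\lfloor t\rfloor-\lfloor s\rfloor}$ on the region $|\lfloor t\rfloor-\lfloor s\rfloor|>R$, and then control each resulting term with Lemma~\ref{vanishing_toeplitz_lemma} together with the submultiplicativity \eqref{algebra} and $p$-subadditivity \eqref{p-prop} of the $\mathrm{m}_p$-quasi-norm. Writing $\{t\}=t-\lfloor t\rfloor\in[0,1)$ and $m=\lfloor t\rfloor-\lfloor s\rfloor$, we have $t-s=m+(\{t\}-\{s\})$ with $|\{t\}-\{s\}|<1<R<|m|$, so on the region in question the geometric series gives
\[
    \frac{1}{t-s}=\sum_{n=0}^\infty\frac{(-1)^n(\{t\}-\{s\})^n}{m^{n+1}},
\]
converging absolutely and uniformly. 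Expanding $(\{t\}-\{s\})^n$ by the binomial theorem and multiplying through by $g_\alpha(t)$ would give
\[
    G_\alpha(t,s)=\sum_{n=0}^\infty\sum_{l=0}^n(-1)^l\binom{n}{l}\{t\}^l\{s\}^{n-l}\cdot g_\alpha(t)\frac{\chi_{|\lfloor t\rfloor-\lfloor s\rfloor|>R}}{(\lfloor t\rfloor-\lfloor s\rfloor)^{n+1}},
\]
a series which, when restricted to any finite submatrix, converges entrywise and hence in the $\mathrm{m}_p$-quasi-norm.

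Next I would estimate the $(n,l)$ summand as a Schur (entrywise) product of three matrices. The matrices $\{t\}^l$ and $\{s\}^{n-l}$ each depend on a single variable and are bounded by $1$, so each has $\mathrm{m}_p$-norm at most $1$ (the proof of Proposition~\ref{schur_properties}.\eqref{single_variable} applies equally for $p<1$, as multiplying $B$ by a diagonal contraction on the left or right does not increase its $\mathcal{L}_p$-quasi-norm). The matrix $g_\alpha(t)\,\chi_{|\lfloor t\rfloor-\lfloor s\rfloor|>R}/(\lfloor t\rfloor-\lfloor s\rfloor)^{n+1}$ has $\mathrm{m}_p$-norm at most $(2/R)^{(n+1)/2}\|\alpha\|_{\ell_{p^\sharp}}$ by Lemma~\ref{vanishing_toeplitz_lemma}. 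Hence by \eqref{algebra} the $(n,l)$ summand has $\mathrm{m}_p$-norm at most $\binom{n}{l}(2/R)^{(n+1)/2}\|\alpha\|_{\ell_{p^\sharp}}$, and by \eqref{p-prop} together with $\sum_{l=0}^n\binom{n}{l}^p\le(n+1)2^{np}$,
\[
    \|G_\alpha\|_{\mathrm{m}_p}^p\le\|\alpha\|_{\ell_{p^\sharp}}^p\sum_{n=0}^\infty(n+1)2^{np}(2/R)^{(n+1)p/2}.
\]
With $R=2^{3+2/p}$ one computes $2^{np}(2/R)^{(n+1)p/2}=2^{-p-(n+1)}$, so the series equals $2^{-p}\sum_{m\ge1}m2^{-m}=2^{1-p}$, yielding $\|G_\alpha\|_{\mathrm{m}_p}\le2^{(1-p)/p}\|\alpha\|_{\ell_{p^\sharp}}$. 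The bound for $H_\alpha$ follows by the same computation, using the transpose of Lemma~\ref{vanishing_toeplitz_lemma}: the $\mathrm{m}_p$-norm is invariant under matrix transposition, and $(\lfloor s\rfloor-\lfloor t\rfloor)^{-n}=(-1)^n(\lfloor t\rfloor-\lfloor s\rfloor)^{-n}$, so moving the $g_\alpha$ factor from the first to the second variable costs nothing.

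The main obstacle, and the only genuinely quantitative point, is the convergence of this last series: the power-series remainder contributes a factor $2^{np}$ from the binomial coefficients, which must be absorbed by the decay $(2/R)^{(n+1)p/2}$ supplied by Lemma~\ref{vanishing_toeplitz_lemma}. This is precisely why $R$ is taken to be $2^{3+2/p}$; any $R$ slightly larger than $8$ would do, but this explicit value makes the geometric ratio exactly $\tfrac12$ and keeps the constant $c_p$ transparent. Everything else is routine bookkeeping with the $p$-subadditivity and submultiplicativity of the Schur multiplier quasi-norm, together with the reduction to finite submatrices implicit in the definition of $\|\cdot\|_{\mathrm{m}_p}$.
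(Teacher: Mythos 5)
Your proof is correct and follows essentially the same route as the paper: expand $\frac{\chi_{|\lfloor t\rfloor-\lfloor s\rfloor|>R}}{t-s}$ as a geometric series in $\frac{\{s\}-\{t\}}{\lfloor t\rfloor-\lfloor s\rfloor}$, bound each term via Lemma \ref{vanishing_toeplitz_lemma} together with \eqref{algebra} and \eqref{p-prop}, sum the resulting geometric-type series using $R=2^{3+\frac{2}{p}}$, and obtain $H_\alpha$ from $G_\alpha$ by transposition. The only (inessential) difference is that you expand $(\{t\}-\{s\})^n$ binomially into single-variable factors, whereas the paper treats $\{\{t\}-\{s\}\}_{t,s}$ as one Schur multiplier with $\|\{\{t\}-\{s\}\}\|_{\mathrm{m}_p}^p\leq 2$ and invokes submultiplicativity of powers.
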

\begin{proof}  
Denote by $\{t\}$ and $\{s\}$ the fractional parts of $t,s\in \mathbb{R}$ respectively, so that 
\begin{equation*}
    \frac{1}{t-s} = \frac{1}{\lfloor t\rfloor+\{t\}-\lfloor s\rfloor-\{s\}} = \frac{1}{\lfloor t\rfloor-\lfloor s\rfloor}\cdot \frac{1}{1-\frac{\{s\}-\{t\}}{\lfloor t\rfloor-\lfloor s\rfloor}}.
\end{equation*}
Since $R>8$, if $\lfloor t\rfloor-\lfloor s\rfloor>R$ then $\left|\frac{\{s\}-\{t\}}{\lfloor t\rfloor -\lfloor s\rfloor}\right|<1$, and hence for all $t,s\in \mathbb{R}$ we have a convergent series
\begin{align*}
    \frac{\chi_{|\lfloor s\rfloor -\lfloor t\rfloor|>R}}{t-s} &= \frac{\chi_{|\lfloor t\rfloor -\lfloor s\rfloor|>R}}{\lfloor t\rfloor -\lfloor s\rfloor} \cdot \frac{1}{1-\frac{\{s\}-\{t\}}{\lfloor t\rfloor-\lfloor s\rfloor}}\\
                                                                &= \frac{\chi_{|\lfloor t\rfloor -\lfloor s\rfloor|>R}}{\lfloor t\rfloor -\lfloor s\rfloor}\sum_{k=0}^\infty \left(\frac{\{s\}-\{t\}}{\lfloor t\rfloor -\lfloor s\rfloor}\right)^k.
\end{align*}
It follows from the $p$-triangle inequality for the $\mathrm{m}_p$-norm \eqref{p-prop} that
\begin{equation*}
    \|G_{\alpha}\|_{\mathrm{m}_p}^p \leq \sum_{k=0}^\infty \|\{g_{\alpha}(t)\frac{\chi_{|\lfloor s\rfloor-\lfloor t\rfloor|> R}}{(\lfloor t\rfloor -\lfloor s\rfloor)^{k+1}}(\{s\}-\{t\})^k\}_{t,s\in \mathbb{R}}\|_{\mathrm{m}_p}^p.
\end{equation*}
Using the submultiplicativity property of the $\mathrm{m}_p$-norm \eqref{algebra}, it follows that
\begin{equation*}
    \|G_{\alpha}\|_{\mathrm{m}_p}^p \leq \sum_{k=0}^\infty \|\{g_{\alpha}(t)\frac{\chi_{|\lfloor s\rfloor-\lfloor t\rfloor|> R}}{(\lfloor t\rfloor -\lfloor s\rfloor)^{k+1}}\}_{t,s\in \mathbb{R}}\|_{\mathrm{m}_p}^p\|\{\{t\}-\{s\}\}_{t,s\in \mathbb{R}}\|_{\mathrm{m}_p}^{kp}.
\end{equation*}
Since $\{t\}$ and $\{s\}$ are bounded above by $1$, we have
\begin{equation*}
    \|\{\{t\}-\{s\}\}_{t,s\in \mathbb{R}}\|_{\mathrm{m}_p}^{p} \leq 2
\end{equation*}
and therefore
\begin{equation*}
    \|\{\{t\}-\{s\}\}_{t,s\in \mathbb{R}}\|_{\mathrm{m}_p}^{kp} \leq 2^{k}.
\end{equation*}
It follows that
\begin{equation*}
    \|G_{\alpha}\|_{\mathrm{m}_p}^p \leq \sum_{k=0}^\infty 2^k  \|\{g_{\alpha}(t)\frac{\chi_{|\lfloor s\rfloor-\lfloor t\rfloor|> R}}{(\lfloor t\rfloor -\lfloor s\rfloor)^{k+1}}\}_{t,s\in \mathbb{R}}\|_{\mathrm{m}_p}^p.
\end{equation*}
Applying Lemma \ref{vanishing_toeplitz_lemma} and using $R = 2^{3+\frac{2}{p}}$ we have
\begin{align*}
    \|G_{\alpha}\|_{\mathrm{m}_p}^p &\leq \|\alpha\|_{\ell_{p^\sharp}}^p\sum_{k=0}^\infty 2^k \left(\frac{2}{R}\right)^{\frac{kp}{2}}\\
                            &= \|\alpha\|_{\ell_{p^\sharp}}^p\sum_{k=0}^\infty 2^{-kp}\\
                            &=c_p\|\alpha\|_{\ell_{p^\sharp}}^p.
\end{align*}
This proves the first inequality. The second identity follows from taking the transpose of the first.

\end{proof}

\begin{proof}[Proof of Theorem \ref{quasi_wavelet_bernstein}] 
Using the homogeneity property \eqref{schur_homogeneity}, it suffices to take $\lambda=1$, and we abbreviate $\phi_{\alpha} = \phi_{\alpha,1}$.

Without loss of generality, we may assume that the functions $\{\phi(\cdot-k)\}_{k\in \mathbb{Z}}$ are disjointly supported. Indeed, otherwise we may select $N>1$ sufficiently
large such that $\{\phi(\cdot-Nk)\}_{k\in \mathbb{Z}}$ are disjointly supported, and write
\begin{equation*}
\phi_{\alpha} = \sum_{j=0}^{N-1}\phi_{\alpha^{(j)}}
\end{equation*}
where $\alpha^{(j)}$ is the sequence $\{\alpha_{j+Nk}\}_{k\in \mathbb{Z}}$. Then we may prove the assertion for each $\phi_{\alpha^{(j)}}$ separately. Moreover, since the assertion
is invariant under rescaling, without loss of generality we assume that $\phi$ is supported in $(0,1)$.

Fix $R=2^{3+\frac2p}.$  We split up $\phi_{\alpha}^{[1]}$ as
\begin{equation}\label{diagonal_splitting_2}
\phi_{\alpha}^{[1]}(t,s)=\phi_{\alpha}^{[1]}(t,s)\chi_{|\lfloor s\rfloor-\lfloor t\rfloor|\leq R}+\phi_{\alpha}^{[1]}(t,s)\chi_{|\lfloor s\rfloor-\lfloor t\rfloor|> R}\stackrel{\mathrm{def}}{=}A_R(t,s)+B_R(t,s),\quad t,s\in \mathbb{R}.
\end{equation}
We bound the individual terms separately.

For the first summand, we have
\begin{equation*}
A_R=\sum_{|j|\leq R}F_j,\text{ where } F_j(t,s):=\phi_{\alpha}^{[1]}(t,s)\chi_{\lfloor s\rfloor-\lfloor t\rfloor=j}.
\end{equation*}
Clearly,
$$\chi_{\lfloor s\rfloor-\lfloor t\rfloor=j}=\sum_{n\in\mathbb{Z}}\chi_{[n,n+1)}(t)\chi_{[n+j,n+j+1)}(s).$$
Thus,
$$F_j=\sum_{n\in\mathbb{Z}}F_{n,j},\text{ where } F_{n,j}(t,s):=\sum_{n\in\mathbb{Z}}\chi_{[n,n+1)}(t)\phi^{[1]}_{\alpha}(t,s)\chi_{[n+j,n+j+1)}(s).$$

We have (see \eqref{p-prop})
\begin{equation}\label{ar preliminary estimate}
\|A_R\|_{\mathrm{m}_p}^p\leq\sum_{|j|\leq R}\|F_j\|_{\mathrm{m}_p}^p.
\end{equation}
Each $F_j$ has a generalised block-diagonal structure in the sense of Lemma \ref{block_diagonal_formula}. It follows from that Lemma that
\begin{equation}\label{fj 22 estimate}
\|F_j\|_{\mathrm{m}_p}\leq \Big\|\Big\{\|F_{n,j}\|_{\mathrm{m}_p}\Big\}_{j\in \mathbb{Z}}\Big\|_{\ell_{p^{\sharp}}}.
\end{equation}

For $j\neq0,$ we write
$$F_{n,j}=\alpha_n G_{n,j}-\alpha_{n+j}H_{n,j},$$
where
\begin{align*}
G_{n,j}(t,s)&:=\phi^{[1]}(t-n,s-n)\chi_{[n+j,n+j+1)}(s),\\
H_{n,j}(t,s)&:=\phi^{[1]}(t-n-j,s-n-j)\chi_{[n,n+1)}(t)\chi_{[n+j,n+j+1)}(s).
\end{align*}
In particular,
\begin{equation*}
\|F_{n,j}\|_{\mathrm{m}_p}^p  \leq  |\alpha_n|^p\|G_{n,j}\|_{\mathrm{m}_p}^p+|\alpha_{n+j}|^p\|H_{n,j}\|_{\mathrm{m}_p}^p 
\leq |\alpha_n|^p\|\phi^{[1]}\|_{\mathrm{m}_p}^p+|\alpha_{n+j}|^p\|\phi^{[1]}\|_{\mathrm{m}_p}^p,\quad j\neq0.
\end{equation*}
Substituting this into \eqref{fj 22 estimate}, we obtain
\begin{equation}\label{fj jneq0}
\|F_j\|_{\mathrm{m}_p}\lesssim \|\phi^{[1]}\|_{\mathrm{m}_p}\|\alpha\|_{\ell_{p^{\sharp}}},\quad j\neq0.
\end{equation}

For $j=0,$ we have $F_{n,0}=\alpha_n G_{n,n}.$ Thus,
\begin{equation}\label{fj jeq0}
\|F_0\|_{\mathrm{m}_p}\leq\|\alpha\|_{\ell_{p^{\sharp}}}\|\phi^{[1]}\|_{\mathrm{m}_p}.
\end{equation}

Substituting \eqref{fj jneq0} and \eqref{fj jeq0} into \eqref{ar preliminary estimate}, we obtain
$$\|A_R\|_{\mathrm{m}_p}\lesssim\|\alpha\|_{\ell_{p^{\sharp}}}\|\phi^{[1]}\|_{\mathrm{m}_p}.$$

Denote by $\phi_{1}$ the function $\phi_{\alpha}$ when the sequence $\alpha$ consists of $1$'s. Since $\phi$ is supported in $(0,1)$, we have that 
$$
\phi_{\alpha} = \phi_{1}\sum_{j\in \mathbb{Z}} \alpha_j\chi_{[j,j+1)}.
$$
Recalling the notation $G_{\alpha}$ and $H_{\alpha}$ from Lemma \ref{GH lemma}, the second summand in \eqref{diagonal_splitting_2}, is expressed as
\begin{align*}
B_R(t,s)&=\phi_{\alpha}(t)\cdot \frac{\chi_{|\lfloor s\rfloor-\lfloor t\rfloor|> R}}{t-s} -\phi_{\alpha}(s)\cdot \frac{\chi_{|\lfloor s\rfloor-\lfloor t\rfloor|> R}}{t-s }\\ &=
\phi_1(t)\cdot G_{\alpha}(t,s)-\phi_1(s)\cdot H_{\alpha}(t,s).
\end{align*}
It follows that
$$\|B_R\|_{\mathrm{m}_p}^p\leq\|\{\phi_1(t)\}_{t,s\in\mathbb{R}}\|_{\mathrm{m}_p}^p\|G_{\alpha}\|_{\mathrm{m}_p}^p+\|\{\phi_1(s)\}_{t,s\in\mathbb{R}}\|_{\mathrm{m}_p}^p\|H_{\alpha}\|_{\mathrm{m}_p}^p.$$
It follows from Lemma \ref{GH lemma} and from trivial estimates
$$\|\{\phi_1(t)\}_{t,s\in\mathbb{R}}\|_{\mathrm{m}_p},\|\{\phi_1(s)\}_{t,s\in\mathbb{R}}\|_{\mathrm{m}_p}\leq\|\phi\|_{\infty}$$
that
$$\|B_R\|_{\mathrm{m}_p}\leq c_p\|\phi\|_{\infty}\|\alpha\|_{p^{\sharp}}.$$
Finally, \eqref{diagonal_splitting_2} yields the result.
\end{proof}

Using Theorem \ref{quasi_wavelet_bernstein}, we obtain the following analogy for $0<p<1$ of Lemma \ref{peller_fav_easy}.
\begin{lemma}\label{quasi_fav_lemma}
    Let $\psi$ be a compactly supported $C^\beta$-wavelet, where $\beta > \frac{2}{p}$, and let $f$ be a locally integrable function on $\mathbb{R}$ such that $f_j\in L_{p^\sharp}(\mathbb{R})$. Then
    \begin{equation*}
        \|f_j^{[1]}\|_{\mathrm{m}_p} \lesssim_p 2^{\frac{j}{p}}\|f_j\|_{p^\sharp}.
    \end{equation*}
\end{lemma}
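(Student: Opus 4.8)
The plan is to obtain this as essentially a direct corollary of Theorem~\ref{quasi_wavelet_bernstein}, in exact parallel with the derivation of Lemma~\ref{peller_fav_easy} from Theorem~\ref{elementary_estimate}. Writing out the definition of $f_j$, we have
\begin{equation*}
    f_j(t) = \sum_{k\in\mathbb{Z}} \alpha_k\,\psi(2^j t-k),\qquad \alpha_k := 2^{j/2}\langle f,\psi_{j,k}\rangle,
\end{equation*}
so that $f_j$ has exactly the form $\phi_{\alpha,\lambda}$ from Theorem~\ref{quasi_wavelet_bernstein}, with $\phi = \psi$ and $\lambda = 2^j$. Since $\psi$ is a compactly supported $C^\beta$ wavelet with $\beta > \tfrac{2}{p}$, the standing hypotheses of Theorem~\ref{quasi_wavelet_bernstein} are satisfied, provided $\alpha \in \ell_{p^\sharp}$. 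But by Lemma~\ref{wavelet_bernstein} applied with the exponent $p^\sharp$ in place of $p$, the assumption $f_j \in L_{p^\sharp}(\mathbb{R})$ is precisely equivalent to $\{\langle f,\psi_{j,k}\rangle\}_{k\in\mathbb{Z}}\in\ell_{p^\sharp}$, hence to $\alpha\in\ell_{p^\sharp}$; so the theorem applies.

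Applying Theorem~\ref{quasi_wavelet_bernstein} with $\lambda = 2^j$ then gives
\begin{equation*}
    \|f_j^{[1]}\|_{\mathrm{m}_p} \lesssim 2^j\|\alpha\|_{\ell_{p^\sharp}} = 2^{j}\cdot 2^{j/2}\Big(\sum_{k\in\mathbb{Z}}|\langle f,\psi_{j,k}\rangle|^{p^\sharp}\Big)^{1/p^\sharp},
\end{equation*}
while Lemma~\ref{wavelet_bernstein} with exponent $p^\sharp$ yields
\begin{equation*}
    \Big(\sum_{k\in\mathbb{Z}}|\langle f,\psi_{j,k}\rangle|^{p^\sharp}\Big)^{1/p^\sharp} \approx 2^{j(\frac{1}{p^\sharp}-\frac{1}{2})}\|f_j\|_{p^\sharp}.
\end{equation*}
Combining the last two displays, the powers of $2$ collapse to $2^{j(1+\frac{1}{p^\sharp})}$, and invoking the defining identity $\tfrac{1}{p} = 1 + \tfrac{1}{p^\sharp}$ gives $\|f_j^{[1]}\|_{\mathrm{m}_p} \lesssim_p 2^{j/p}\|f_j\|_{p^\sharp}$, as claimed. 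When $p=1$ (so $p^\sharp = \infty$) this reduces verbatim to Lemma~\ref{peller_fav_easy}.

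There is essentially no obstacle here beyond bookkeeping. The only two points requiring a little care are: (i) one must confirm $\ell_{p^\sharp}$-summability of the wavelet coefficients so that Theorem~\ref{quasi_wavelet_bernstein} is legitimately applicable, which is exactly what the hypothesis $f_j\in L_{p^\sharp}$ provides via Lemma~\ref{wavelet_bernstein}; and (ii) the exponents of $2$ must be tracked correctly through the rescaling $t\mapsto 2^j t$, which is handled by the homogeneity in $\lambda$ already built into Theorem~\ref{quasi_wavelet_bernstein} together with the scaling rule $\|g(\lambda\cdot)\|_q = \lambda^{-1/q}\|g\|_q$ used inside Lemma~\ref{wavelet_bernstein}.
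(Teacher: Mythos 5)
Your proof is correct and follows essentially the same route as the paper: write $f_j$ in the form $\phi_{\alpha,\lambda}$ with $\phi=\psi$, $\lambda=2^j$, $\alpha_k=2^{j/2}\langle f,\psi_{j,k}\rangle$, apply Theorem~\ref{quasi_wavelet_bernstein}, and then convert $\|\alpha\|_{\ell_{p^\sharp}}$ into $\|f_j\|_{p^\sharp}$ via Lemma~\ref{wavelet_bernstein}, with the exponents collapsing to $2^{j/p}$ exactly as in the paper. The extra remark that $f_j\in L_{p^\sharp}$ guarantees $\alpha\in\ell_{p^\sharp}$ is a harmless (and correct) elaboration of a point the paper leaves implicit.
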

\begin{proof}
    By definition \eqref{f_j_definition},
    \begin{equation*}
        f_j(t) = \sum_{k\in \mathbb{Z}} 2^{\frac{j}{2}}\psi(2^j t-k)\langle f, \psi_{j,k}\rangle,\quad t \in \mathbb{R}.
    \end{equation*}
    Theorem \ref{quasi_wavelet_bernstein} with $\phi=\psi$ yields
    \begin{equation*}
        \|f_j^{[1]}\|_{\mathrm{m}_p} \lesssim 2^{\frac{3j}{2}}\left(\sum_{k\in \mathbb{Z}} |\langle f,\psi_{j,k}\rangle|^{p^\sharp}\right)^{1/p^\sharp}.
    \end{equation*}
    Now applying Lemma \ref{wavelet_bernstein} gives us
    \begin{equation*}
        2^{\frac{3j}{2}}\left(\sum_{k\in \mathbb{Z}} |\langle f,\psi_{j,k}\rangle|^{p^\sharp}\right)^{1/p^\sharp}\lesssim 2^{\frac{3j}{2}}\cdot 2^{j\left(\frac{1}{p^{\sharp}}-\frac{1}{2}\right)}\|f_j\|_{p^\sharp}  = 2^{\frac{j}{p}}\|f_j\|_{p^\sharp}.
    \end{equation*}
\end{proof}

Lemma \ref{quasi_fav_lemma} gives us the following result, which generalises Corollary \ref{Peller_critereon} and is proved in the same way.
\begin{theorem}
    Let $0 < p \leq 1$. Then
    \begin{equation*}
        \|f^{[1]}\|_{\mathrm{m}_p} \lesssim \|f'\|_\infty+\left(\sum_{j\in \mathbb{Z}} 2^{j}\|f_j\|_{p^\sharp}^p\right)^{1/p} = \|f'\|_\infty+\|f\|_{\dot{B}^{\frac{1}{p}}_{p^\sharp,p}}
    \end{equation*}
    for all Lipschitz functions $f$ belonging to $\dot{B}^{\frac{1}{p}}_{p^\sharp,p}(\mathbb{R})$.
\end{theorem}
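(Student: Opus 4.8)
The plan is to follow the proof of Corollary \ref{Peller_critereon} line by line, substituting the quasi-Banach ingredients developed above for their $p=1$ counterparts. First I would fix, once and for all, a compactly supported $C^\beta$-wavelet $\psi$ with $\beta>\frac{2}{p}$; this is in particular of regularity $C^r$ for some $r>1$, so Lemma \ref{besov_realisation} applies with this $\psi$. The wavelet characterisation of Besov spaces (Theorem \ref{besov_space_wavelet_characterisation}) gives, from the hypothesis $f\in\dot B^{\frac1p}_{p^\sharp,p}(\mathbb R)$,
\[
\sum_{j\in\mathbb Z}2^{j}\|f_j\|_{p^\sharp}^{p}\approx\|f\|_{\dot B^{\frac1p}_{p^\sharp,p}}^{p}<\infty,
\]
so in particular each $f_j$ lies in $L_{p^\sharp}(\mathbb R)$ and Lemma \ref{quasi_fav_lemma} is applicable for every $j$.

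Next I would invoke Lemma \ref{besov_realisation} to write, with uniform convergence on compact sets,
\[
f(t)=f(0)+ct+\sum_{j\in\mathbb Z}\bigl(f_j(t)-f_j(0)\bigr),\qquad |c|\lesssim\|f'\|_\infty+\|f\|_{\dot B^{\frac1p}_{p^\sharp,p}}.
\]
Passing to divided differences annihilates the constant $f(0)$, turns $ct$ into the constant function $(t,s)\mapsto c$ (whose associated Schur multiplier is scalar multiplication by $c$, so its $\mathrm m_p$-quasi-norm equals $|c|$), and leaves $\bigl(f_j(\cdot)-f_j(0)\bigr)^{[1]}=f_j^{[1]}$ unchanged. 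Since the $\mathrm m_p$-quasi-norm of a matrix indexed by an arbitrary set is the supremum of the $\mathrm m_p$-quasi-norms of its finite submatrices, and on any fixed finite submatrix the entrywise-convergent series $\sum_j f_j^{[1]}$ converges in the (genuine, finite-dimensional) $\mathrm m_p$-quasi-norm once $\{\|f_j^{[1]}\|_{\mathrm m_p}\}_{j}\in\ell_p$ — which Lemma \ref{quasi_fav_lemma} combined with the display above guarantees — the $p$-triangle inequality \eqref{p-prop} can be applied termwise to the identity $f^{[1]}=c+\sum_j f_j^{[1]}$, giving
\[
\|f^{[1]}\|_{\mathrm m_p}^{p}\leq|c|^{p}+\sum_{j\in\mathbb Z}\|f_j^{[1]}\|_{\mathrm m_p}^{p}.
\]

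Finally I would insert the estimate of Lemma \ref{quasi_fav_lemma}, namely $\|f_j^{[1]}\|_{\mathrm m_p}\lesssim_p 2^{j/p}\|f_j\|_{p^\sharp}$, to obtain
\[
\|f^{[1]}\|_{\mathrm m_p}^{p}\lesssim_p|c|^{p}+\sum_{j\in\mathbb Z}\bigl(2^{j/p}\|f_j\|_{p^\sharp}\bigr)^{p}=|c|^{p}+\sum_{j\in\mathbb Z}2^{j}\|f_j\|_{p^\sharp}^{p}\approx|c|^{p}+\|f\|_{\dot B^{\frac1p}_{p^\sharp,p}}^{p},
\]
and then taking $p$-th roots and using the bound on $|c|$ from Lemma \ref{besov_realisation} yields $\|f^{[1]}\|_{\mathrm m_p}\lesssim_p\|f'\|_\infty+\|f\|_{\dot B^{\frac1p}_{p^\sharp,p}}$, as required. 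The only point requiring genuine care — and hence the main obstacle, such as it is — is the convergence bookkeeping of the previous paragraph: one must verify that the formal identity $f^{[1]}=c+\sum_j f_j^{[1]}$ may legitimately be fed into the $p$-triangle inequality for infinitely many terms, which is precisely why one passes to finite submatrices (built into the definition of $\mathrm m_p$ on arbitrary index sets), and why the \emph{Lipschitz} hypothesis on $f$ — not merely Besov membership — is indispensable: it is what makes Lemma \ref{besov_realisation} produce a representation with at most a linear, rather than a higher-degree, polynomial correction.
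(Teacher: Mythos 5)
Your proposal is correct and follows essentially the same route as the paper: representing $f$ via Lemma \ref{besov_realisation}, applying the $p$-triangle inequality \eqref{p-prop} to $f^{[1]}=c+\sum_j f_j^{[1]}$, and bounding each term by Lemma \ref{quasi_fav_lemma}. The extra convergence bookkeeping you supply (reducing to finite submatrices before summing) is a sound justification of the step the paper performs directly.
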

\begin{proof}
    By Lemma \ref{besov_realisation}, there exists a constant $c$ such that
    \begin{equation*}
        f(t) = f(0)+ct+\sum_{j\in \mathbb{Z}} f_j(t)-f_j(0),\quad t \in \mathbb{R}.
    \end{equation*}
    Therefore,
    \begin{equation*}
        f^{[1]} = c+\sum_{j\in \mathbb{Z}} f_j^{[1]}.
    \end{equation*}
    Using the $p$-triangle inequality \eqref{p-prop}, it follows that
    \begin{equation*}
        \|f^{[1]}\|_{\mathrm{m}_p}^p \leq |c|^p + \sum_{j\in \mathbb{Z}} \|f_j^{[1]}\|_{\mathrm{m}_p}^p.
    \end{equation*}
    Bounding the $j$th summand with Lemma \ref{quasi_fav_lemma}, 
    \begin{equation*}
        \|f^{[1]}\|_{\mathrm{m}_p}^p \lesssim |c|^p + \sum_{j\in \mathbb{Z}} 2^{\frac{j}{p}}\|f_j\|_{p^{\sharp}}^p = |c|^p+\|f\|_{\dot{B}^\frac{1}{p}_{p^\sharp,p}}^p.
    \end{equation*}
    Since $|c|\lesssim \|f'\|_\infty+\|f\|_{B^{\frac{1}{p}}_{p^\sharp,p}}$, the result follows.
\end{proof}
This completes the proof of Theorem \ref{main_sufficient_condition}.

\begin{remark}
    Using Theorem \ref{main_sufficient_condition}, it is possible to extend \eqref{peller_fav_thm} to the range $0<p<1$ in a certain sense. That is, if $f\in L_{p^\sharp}(\mathbb{R})$ is a distribution with Fourier transform 
    supported in the set $[-\frac{12}{7}\sigma,-\sigma)\cup (\sigma,\frac{12}{7}\sigma]$ where $\sigma>0$, then
    \begin{equation*}
        \|f^{[1]}\|_{\mathrm{m}_p} \lesssim_p \sigma^{\frac{1}{p}}\|f\|_{p^\sharp}.
    \end{equation*}
    Note that by rescaling if necessary and applying \eqref{schur_homogeneity} it suffices to take $\sigma=1$, so that $\Delta_0 f = f$ and $\Delta_{n}f=0$ for $n\neq 0$. Assume 
    now that $f$ is a $p^\sharp$-integrable distribution with Fourier transform supported in $[-\frac{12}{7},-1)\cup (1,\frac{12}{7}]$.

    It follows from Theorem \ref{main_sufficient_condition} and Bernstein's inequality \cite[Corollary 1.5]{Sawano2018} that
    \begin{equation*}
        \|f^{[1]}\|_{\mathrm{m}_p} \lesssim_p \|f'\|_{\infty}+\|f\|_{\dot{B}^{\frac{1}{p}}_{p^\sharp,p}} \lesssim_p \|f\|_{\infty}+\|f\|_{\dot{B}^{\frac{1}{p}}_{p^\sharp,p}}.
    \end{equation*}
    According to \cite[Corollary 1.8]{Sawano2018}, we have $\|f\|_{\infty} \lesssim_p \|f\|_{p^\sharp}$ and hence
    \begin{equation*}
        \|f^{[1]}\|_{\mathrm{m}_p} \lesssim_p \|f\|_{p^\sharp}+\|f\|_{\dot{B}^{\frac{1}{p}}_{p^\sharp,p}}.
    \end{equation*}
    Using the definition of the Besov semi-norm \eqref{besov_seminorm_def}, that $\Delta_0 f = f\in L_{p^\sharp}(\mathbb{R})$ and that $\Delta_nf=0$ for $n\neq 0$, we have
    \begin{align*}
        \|f\|_{\dot{B}^{\frac{1}{p}}_{p^\sharp,p}} = \left(\sum_{n\in \mathbb{Z}} 2^{n}\|\Delta_n f\|_{p^\sharp}^p\right)^{\frac{1}{p}} = \|\Delta_0 f\|_{p^\sharp} = \|f\|_{p^\sharp}.
    \end{align*}
    Hence, for $0< p\leq 1$ and $f$ with Fourier transform supported in $[-\frac{12}{7},-1)\cup (1,\frac{12}{7}]$ we have
    \begin{equation*}
        \|f^{[1]}\|_{\mathrm{m}_p} \lesssim_p \|f\|_{p^\sharp}.
    \end{equation*}
\end{remark}
\subsection{Submajorisation inequalities}\label{submajor_section}
In this section we assume that $0 < p \leq 1$, and $\psi$ is a $C^\beta$ compactly supported wavelet where $\beta > \frac{2}{p}$. All wavelet components $f_j$
are computed with respect to $\psi$.

In terms of singular values, Theorem \ref{main_sufficient_condition} states that there exists a constant $C_p > 0$ such that for all self-adjoint bounded operators $A$ and $B$
with $A-B\in \mathcal{L}_p$ we have
\begin{equation*}
    \sum_{k=0}^\infty \mu(k,f(A)-f(B))^p \leq C_p^p(\|f'\|_\infty+\|f\|_{B^{\frac{1}{p}}_{p^\sharp,p}})^p\sum_{k=0}^\infty \mu(k,A-B)^p.
\end{equation*}
Using a short argument borrowed from \cite{HSZ2019} we can strengthen this inequality. For all bounded self-adjoint operators $A$ and $B$ with $A-B$ compact, in this section we will prove that the following stronger statement holds:
\begin{equation*}
    \sum_{k=0}^n \mu(k,f(A)-f(B))^p \leq K_p^p(\|f'\|_\infty+\|f\|_{B^{\frac{1}{p}}_{p^\sharp,p}})^p\sum_{k=0}^n \mu(k,A-B)^p,\quad n\geq 0.
\end{equation*}
Here $K_p>0$ is a constant. In principle it may be that $K_p$ is larger than $C_p$, but $K_p$ is independent of $n$. The argument in \cite{HSZ2019} is based on real interpolation of the couple $(\mathcal{L}_p,\mathcal{L}_{\infty})$, we recall
the relevant details in a mostly self-contained manner here.

We make use of the following inequality originally due to Rotfel'd \cite{Rotfeld1968}, which holds for $0 < p \leq 1$
and compact operators $X$ and $Y$,
\begin{equation}\label{p_ky_fan}
    \mu(X+Y)^p \prec\prec \mu(X)^p+\mu(Y)^p.
\end{equation} 
Here, $\prec\prec$ denotes submajorisation in the sense of Hardy, Littlewood and P\'{o}lya. The meaning of \eqref{p_ky_fan} is that for all $n\geq 0$ we have
\begin{equation*}
    \sum_{k=0}^n \mu(k,X+Y)^p \leq \sum_{k=0}^n \mu(k,X)^p+\mu(k,Y)^p.
\end{equation*}
An alternative perspective on \eqref{p_ky_fan} is that it follows from the fact that $t\mapsto t^p$ is operator monotone when $0 < p\leq 1$. See \cite[Theorem 3.7]{DS-concave-2009}.

We will make use of the following lemma, which is purely operator theoretic.
\begin{lemma}\label{infimum_lemma}
    Let $X$ be a compact operator. For all $n\geq 0$ there exists a projection
    $P$ such that
    \begin{equation*}
        \|X(1-P)\|_p^p+(n+1)\|XP\|_{\infty}^p \leq 2\sum_{k=0}^n \mu(k,X)^p.
    \end{equation*}
    The projection $P$ can be chosen such that $1-P$ has finite rank.
\end{lemma}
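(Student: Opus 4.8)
The plan is to choose $P$ using the spectral projections of $|X|$. Write $X = \sum_{k\geq 0} \mu(k,X)\, \xi_k\otimes\eta_k$ for the Schmidt decomposition of the compact operator $X$, and for each $n\geq 0$ set $P := 1 - \sum_{k=0}^n \xi_k\otimes\xi_k$, so that $1-P$ is the rank $n+1$ projection onto $\mathrm{span}\{\xi_0,\dots,\xi_n\}$. First I would note that with this choice, $X(1-P) = \sum_{k=0}^n \mu(k,X)\,\xi_k\otimes\eta_k$ has singular values $\mu(0,X),\dots,\mu(n,X)$, hence
\begin{equation*}
    \|X(1-P)\|_p^p = \sum_{k=0}^n \mu(k,X)^p.
\end{equation*}
Next, $XP = \sum_{k>n} \mu(k,X)\,\xi_k\otimes\eta_k$, so $\|XP\|_\infty = \mu(n+1,X) \leq \mu(n,X)$, and therefore
\begin{equation*}
    (n+1)\|XP\|_\infty^p \leq (n+1)\,\mu(n,X)^p \leq \sum_{k=0}^n \mu(k,X)^p,
\end{equation*}
where the last step uses that $\mu(0,X)\geq \mu(1,X)\geq\cdots\geq\mu(n,X)$. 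Adding the two displays gives the claimed bound with the constant $2$, and by construction $1-P$ has finite rank. The verification is entirely routine; the only point requiring a little care is the bookkeeping when $X$ has rank at most $n$ (in which case $XP=0$ and the second term is simply zero, so the bound still holds), and the observation that the singular value sequence of a truncation of a Schmidt series is exactly the truncated singular value sequence. There is no serious obstacle here — the lemma is a soft preparatory step whose role is to furnish, for the subsequent real-interpolation argument in the style of \cite{HSZ2019}, a near-optimal splitting of $X$ realizing the $K$-functional of the couple $(\mathcal{L}_p,\mathcal{L}_\infty)$ at the point $n+1$.
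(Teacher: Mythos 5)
Your strategy is the same as the paper's (split $X$ at its $n$-th singular value, bound the $\mathcal{L}_p$-piece by the top $n+1$ singular values and the uniform piece by $\mu(n,X)$), but the projection you wrote down is on the wrong side. With the paper's convention $(\xi\otimes\eta)_{j,k}=\xi_j\eta_k$, in the Schmidt decomposition $X=\sum_k\mu(k,X)\,\xi_k\otimes\eta_k$ the vectors $\xi_k$ span the \emph{range} of $X$ (they are eigenvectors of $|X^*|$), whereas multiplying by a projection on the \emph{right} restricts the domain. Hence for $X(1-P)$ to be the truncated Schmidt series you must take $1-P$ to be the projection onto $\mathrm{span}\{\eta_0,\dots,\eta_n\}$, equivalently a spectral projection of $|X|$, not onto $\mathrm{span}\{\xi_0,\dots,\xi_n\}$. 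With your choice the identities $X(1-P)=\sum_{k\leq n}\mu(k,X)\,\xi_k\otimes\eta_k$ and $\|XP\|_\infty=\mu(n+1,X)$ are false in general (already for $X=a\,e_0\otimes e_1+b\,e_1\otimes e_0$ with $n=0$ one gets $\|XP\|_\infty=a=\mu(0,X)$, not $\mu(1,X)$), and the asserted inequality itself can fail: for $X=e_0\otimes e_{n+1}$, completing the Schmidt system by $\xi_k=e_k$, your $P$ satisfies $X(1-P)=0$ and $XP=X$ (the opposite of what your rank-deficient remark claims), so the left-hand side equals $n+1$ while the right-hand side is $2$, a contradiction for $n\geq 2$. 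A small perturbation such as $X=e_0\otimes e_{n+1}+\varepsilon\sum_{k\geq 1}e_k\otimes e_{k-1}$ shows this is not an artifact of zero singular values: the top eigenvector of $|X|$ (here $e_{n+1}$) is orthogonal to all your $\xi_k$, so $XP$ retains the dominant part of $X$.

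The repair is one line: take $1-P$ to be the spectral projection of $|X|$ onto its $n+1$ largest eigenvalues (replace $\xi_k$ by $\eta_k$, suitably conjugated). Then $X(1-P)$ really is the truncated Schmidt series, $\|X(1-P)\|_p^p=\sum_{k=0}^n\mu(k,X)^p$, $\|XP\|_\infty\leq\mu(n,X)$, and your two displayed estimates go through verbatim, with $1-P$ of rank $n+1$. This repaired argument coincides with the paper's proof, which reduces to $X\geq 0$ by polar decomposition and takes $P=\chi_{[0,\mu(n,X))}(X)$.
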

\begin{proof}
    Passing to a polar decomposition $|X| = UX$ if necessary, it suffices to take $X\geq 0$.
    Let $P$ denote the projection
    \begin{equation*}
        P := \chi_{[0,\mu(n,X))}(X).
    \end{equation*}
    Then,
    \begin{equation*}
        \mu(j,X(1-P)) = \mu(j,X),\quad j\leq n,\quad \mu(n+1,X(1-P)) = 0.
    \end{equation*}
    It follows that
    \begin{equation}\label{lower_cut}
        \|X(1-P)\|_p^p = \sum_{j=0}^n \mu(j,X(1-P))^p = \sum_{j=0}^n \mu(j,X)^p 
    \end{equation}
    and
    \begin{equation*}
        \|XP\|_{\infty} \leq \mu(n,X).
    \end{equation*}
    Therefore,
    \begin{equation}\label{upper_cut}
        (n+1)\|XP\|_{\infty}^p \leq (n+1)\mu(n,X)^p \leq \sum_{j=0}^n \mu(j,X)^p.
    \end{equation}
    Adding \eqref{lower_cut} and \eqref{upper_cut} yields the result.    
\end{proof}

From Theorem \ref{main_sufficient_condition} we can deduce the following result, whose proof is based on Theorem 6.1 in \cite{HSZ2019}. Recall that $\psi$ is a fixed compactly supported $C^\beta$ wavelet, where $\beta > \frac{2}{p}$
and for $j\in \mathbb{Z}$ we denote
\begin{equation*}
    f_j = \sum_{k\in \mathbb{Z}} \psi_{j,k}\langle f,\psi_{j,k}\rangle.
\end{equation*}
Since $\|\cdot\|_{\mathrm{m}_1}=\|\cdot\|_{m_\infty},$ it follows from Lemma \ref{peller_fav_easy} that
\begin{equation}\label{peller_fav_l_infty}
    \|f_j(A)-f_j(B)\|_{\infty} \lesssim 2^{j}\|f_j\|_{\infty}\|A-B\|_{\infty},\quad A,B\in \mathcal{B}_{\mathrm{sa}}(H).
\end{equation}    
\begin{theorem}\label{submajor_thm}
    Let $f$ be a locally integrable function on $\mathbb{R}$, and let $j\in \mathbb{Z}$ be such that $f_j\in L_\infty(\mathbb{R})$. There exists a constant $C_p>0$ such that for all bounded self-adjoint operators $A$ and $B$ with $A-B$ compact we have
    \begin{equation*}
        \mu(f_j(A)-f_j(B))^p \prec\prec C_p2^{j}\|f_j\|_{p^\sharp}^p\mu(A-B)^p.
    \end{equation*}
\end{theorem}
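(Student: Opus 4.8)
The plan is to run a real-interpolation (``$K$-functional'') argument between the endpoints $\mathcal{L}_p$ and $\mathcal{L}_\infty$, as in \cite{HSZ2019}, by gluing a quasi-norm estimate and an operator-norm estimate for $f_j(A)-f_j(B)$ with the help of Lemma~\ref{infimum_lemma}. We may assume $f_j\in L_{p^\sharp}(\mathbb{R})$, since otherwise the right-hand side is $+\infty$ and there is nothing to prove.

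First I would record the two endpoint estimates, arranged to carry the \emph{same} constant. By Lemma~\ref{quasi_fav_lemma} and the implication from $\|f_j^{[1]}\|_{\mathrm{m}_p}<\infty$ to $\mathcal{L}_p$-operator Lipschitzness recalled in Section~\ref{lp_lip_section},
\begin{equation*}
    \|f_j(A)-f_j(B)\|_p \lesssim_p 2^{j/p}\|f_j\|_{p^\sharp}\,\|A-B\|_p,\qquad A-B\in\mathcal{L}_p .
\end{equation*}
On the other hand \eqref{peller_fav_l_infty} reads $\|f_j(A)-f_j(B)\|_\infty\lesssim 2^j\|f_j\|_\infty\|A-B\|_\infty$, while \eqref{sequential_bernstein} (with exponents $p^\sharp$ and $\infty$) gives $\|f_j\|_\infty\lesssim 2^{j/p^\sharp}\|f_j\|_{p^\sharp}$; since $\frac1p=\frac1{p^\sharp}+1$ this becomes
\begin{equation*}
    \|f_j(A)-f_j(B)\|_\infty \lesssim_p 2^{j/p}\|f_j\|_{p^\sharp}\,\|A-B\|_\infty,\qquad A,B\in\mathcal{B}_{\mathrm{sa}}(H).
\end{equation*}
Writing $\Lambda:=2^{j/p}\|f_j\|_{p^\sharp}$, so that $\Lambda^p=2^j\|f_j\|_{p^\sharp}^p$, the target becomes $\sum_{k=0}^n\mu(k,f_j(A)-f_j(B))^p\lesssim_p\Lambda^p\sum_{k=0}^n\mu(k,A-B)^p$ for every $n\geq 0$.

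The main step is to align the nonlinear difference $f_j(A)-f_j(B)$ with a linear splitting of $X:=A-B$. Fix $n$ and apply Lemma~\ref{infimum_lemma} to the compact self-adjoint operator $X$, taking (as in its proof) $P$ to be a spectral projection of $|X|$, so that $P$ commutes with $X$, $1-P$ has finite rank, and $\|X(1-P)\|_p^p+(n+1)\|XP\|_\infty^p\leq 2\sum_{k=0}^n\mu(k,X)^p$. Put $B':=B+XP=A-X(1-P)$, a bounded self-adjoint operator with $A-B'=X(1-P)$ of finite rank and $B'-B=XP$ compact. Decompose
\begin{equation*}
    f_j(A)-f_j(B)=\bigl(f_j(A)-f_j(B')\bigr)+\bigl(f_j(B')-f_j(B)\bigr)=:Y+Z .
\end{equation*}
The $\mathcal{L}_p$-endpoint estimate (legitimate since $A-B'\in\mathcal{L}_p$) gives $\|Y\|_p^p\lesssim_p\Lambda^p\|X(1-P)\|_p^p$, and the $\mathcal{L}_\infty$-endpoint estimate gives $\|Z\|_\infty\lesssim_p\Lambda\|XP\|_\infty$. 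Using $\mu(k,Y+Z)\leq\mu(k,Y)+\|Z\|_\infty$ together with the $p$-subadditivity $(a+b)^p\leq a^p+b^p$ (equivalently \eqref{p_ky_fan}), for each $n$
\begin{equation*}
    \sum_{k=0}^n\mu(k,f_j(A)-f_j(B))^p\leq \|Y\|_p^p+(n+1)\|Z\|_\infty^p\lesssim_p\Lambda^p\bigl(\|X(1-P)\|_p^p+(n+1)\|XP\|_\infty^p\bigr)\leq 2\Lambda^p\sum_{k=0}^n\mu(k,X)^p ,
\end{equation*}
which is exactly the asserted $\mu(f_j(A)-f_j(B))^p\prec\prec C_p\,2^j\|f_j\|_{p^\sharp}^p\,\mu(A-B)^p$.

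I expect the delicate points to be bookkeeping rather than conceptual. One must check that the two endpoint inequalities genuinely apply to the perturbed pairs: $A-B'=X(1-P)$ is of finite rank, hence in $\mathcal{L}_p$, and $B'-B$ is bounded, so there is no obstruction. If one prefers to invoke \eqref{p_ky_fan} in place of the elementary Fan inequality, one additionally needs $Z=f_j(B')-f_j(B)$ to be compact, which holds because $B'-B$ is compact and $f_j$ is continuous (approximate $f_j$ uniformly on a compact interval containing $\sigma(B)\cup\sigma(B')$ by polynomials, noting $C^m-D^m=\sum_{i=0}^{m-1}C^i(C-D)D^{m-1-i}$ is compact whenever $C-D$ is). The real crux is the choice of the intermediate operator $B'$, where the commutation of $P$ with $X$ is what keeps $B'$ self-adjoint; once that choice is made, everything reduces to the two endpoint estimates and Lemma~\ref{infimum_lemma}.
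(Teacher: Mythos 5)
Your proposal is correct and follows essentially the same route as the paper's own proof: the identical splitting $f_j(A)-f_j(B)=\bigl(f_j(A)-f_j(A_P)\bigr)+\bigl(f_j(A_P)-f_j(B)\bigr)$ with $A_P=B+(A-B)P$, the same two endpoint bounds (Lemma~\ref{quasi_fav_lemma} for the $\mathcal{L}_p$ piece, \eqref{peller_fav_l_infty} combined with \eqref{sequential_bernstein} for the $\mathcal{L}_\infty$ piece), and the same optimization of the projection via Lemma~\ref{infimum_lemma}. Your additional remarks (that $P$ can be taken to be a spectral projection of $|A-B|$ so that $A_P$ is self-adjoint, and that the second summand is compact) merely make explicit points the paper leaves implicit.
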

\begin{proof}
    Here, one uses the inequality
    $$\sum_{k=0}^n\mu^p(k,X+Y)\leq\|X\|_p^p+(n+1)\|Y\|_{\infty}^p,$$
    for compact $X$ and $Y$, which follows from \eqref{p_ky_fan}.

    Let $P$ be a projection with $1-P$ finite rank, and let $A_P = B+(A-B)P$. Then
    \begin{equation*}
        f_j(A)-f_j(B) = f_j(A)-f_j(A_P)+f_j(A_P)-f_j(B)
    \end{equation*}
    Therefore, for all $n\geq 0$ we have
    \begin{equation*}
        \sum_{k=0}^n \mu(k,f_j(A)-f_j(B))^p \leq \|f_j(A)-f_j(A_P)\|_p^p+(n+1)\|f_j(A_P)-f_j(B)\|_\infty^p.
    \end{equation*}
    Note that $A-A_P = (A-B)(1-P)$ and $A_P-B = (A-B)P$. Since $1-P$ has finite
    rank, $A-A_P\in \mathcal{L}_p$.
    Applying Theorem \ref{quasi_fav_lemma} to $\|f_j(A)-f_j(A_P)\|_p^p$ and \eqref{peller_fav_l_infty} for $\|f_j(A_P)-f_j(B)\|_\infty^p$ yields
    \begin{equation*}
        \sum_{k=0}^n \mu(k,f_j(A)-f_j(B))^p \lesssim_p 2^{j}\|f_j\|_{p^\sharp}^p\|A-A_P\|_p^p+(n+1)2^{jp}\|f_j\|_\infty^p\|A_P-B\|_\infty^p
    \end{equation*}
    where the constant is independent of $n$.
    From \eqref{sequential_bernstein}, we have that $\|f_j\|_\infty \lesssim_p 2^{j(\frac{1}{p}-1)} \|f_j\|_{p^\sharp}$. Therefore,
    \begin{equation*}
        \sum_{k=0}^n \mu(k,f_j(A)-f_j(B))^p \lesssim_p 2^{j}\|f_j\|_{p^\sharp}^p(\|A-A_P\|_p^p+(n+1)\|A_P-B\|_\infty^p).
    \end{equation*}
    Using Lemma \ref{infimum_lemma} with $X = A-B$ implies that there exists $P$
    such that
    \begin{equation*}
        \|A-A_P\|_p^p+(n+1)\|A_P-B\|_\infty^p = \|(A-B)(1-P)\|_{p}^p+(n+1)\|(A-B)P\|_\infty^p \leq 2\sum_{j=0}^n \mu(j,A-B)^p.
    \end{equation*}
    Thus,
    \begin{equation*}
        \sum_{k=0}^n \mu(k,f_j(A)-f_j(B))^p \lesssim_p 2^{j}\|f_j\|_{p^\sharp}^p\sum_{k=0}^n \mu(k,A-B)^p,\quad n\geq 0
    \end{equation*}
    as required.
\end{proof}

Representing $f$ as $ct+\sum_{j\in \mathbb{Z}}f_j-f_j(0)$ and applying \eqref{p_ky_fan}, we arrive at the following submajorisation result.
\begin{corollary}
    Let $0 < p \leq 1$. For Lipschitz functions $f \in \dot{B}^{\frac{1}{p}}_{p^\sharp,p}(\mathbb{R})$ and all bounded self-adjoint operators $A$ and $B$ with $A-B$ compact we have
    $$
        |f(A)-f(B)|^p \prec\prec C_{p}(\|f'\|_\infty+\|f\|_{\dot{B}^{\frac{1}{p}}_{p^\sharp,p}})^p|A-B|^p.
    $$
    Equivalently, for any fully symmetric operator space $\mathcal{E}$ (see e.g. \cite[Definition 2.5.7]{LSZ}), we have the Lipschitz estimate
    $$
        \||f(A)-f(B)|^p\|_{\mathcal{E}} \lesssim (\|f'\|_\infty+\|f\|_{\dot{B}^{\frac{1}{p}}_{p^\sharp,p}})^p\||A-B|^p\|_{\mathcal{E}}.
    $$
\end{corollary}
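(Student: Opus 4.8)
The plan is to reduce the assertion to the single-component submajorisation bound of Theorem~\ref{submajor_thm} by using the wavelet representation of $f$ together with the Rotfel'd inequality \eqref{p_ky_fan}. First I would invoke Lemma~\ref{besov_realisation} to write $f(t) = f(0) + ct + \sum_{j\in\mathbb{Z}}(f_j(t) - f_j(0))$ with $|c| \lesssim \|f'\|_\infty + \|f\|_{\dot{B}^{1/p}_{p^\sharp,p}}$, the $f_j$ being computed relative to the fixed compactly supported $C^\beta$ wavelet $\psi$ with $\beta > \tfrac{2}{p}$, and the series converging uniformly on compact sets. Since $f \in \dot{B}^{1/p}_{p^\sharp,p}(\mathbb{R})$ we have $f_j \in L_{p^\sharp}(\mathbb{R})$ and, by \eqref{sequential_bernstein}, $\|f_j\|_\infty \lesssim 2^{j/p^\sharp}\|f_j\|_{p^\sharp} < \infty$, so Theorem~\ref{submajor_thm} applies to each $f_j$. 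As $A$ and $B$ are bounded, uniform convergence of the series on $\sigma(A)\cup\sigma(B)$ together with the continuous functional calculus gives $f(A) - f(B) = c(A-B) + \sum_{j\in\mathbb{Z}}(f_j(A) - f_j(B))$ with convergence in operator norm; in particular $f(A) - f(B)$ is compact, being a uniform limit of the compact operators $c(A-B) + \sum_{|j|\leq N}(f_j(A)-f_j(B))$.

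Next, for each finite $N$ I would apply \eqref{p_ky_fan} to the finite sum, obtaining
$$\mu\Big(c(A-B) + \sum_{|j|\leq N}(f_j(A)-f_j(B))\Big)^p \prec\prec |c|^p\mu(A-B)^p + \sum_{|j|\leq N}\mu(f_j(A)-f_j(B))^p.$$
By Theorem~\ref{submajor_thm}, $\mu(f_j(A)-f_j(B))^p \prec\prec C_p 2^j\|f_j\|_{p^\sharp}^p\mu(A-B)^p$ for each $j$, so by additivity and transitivity of submajorisation the right-hand side is submajorised by $\big(|c|^p + C_p\sum_{|j|\leq N}2^j\|f_j\|_{p^\sharp}^p\big)\mu(A-B)^p$. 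Letting $N\to\infty$, the operator inside the left-hand side converges in operator norm to $f(A)-f(B)$, hence $\mu(k,\cdot)$ converges for each $k$ and therefore so do the partial sums $\sum_{k=0}^n\mu(k,\cdot)^p$ defining submajorisation; meanwhile, by Theorem~\ref{besov_space_wavelet_characterisation}, $\sum_{j\in\mathbb{Z}}2^j\|f_j\|_{p^\sharp}^p = \|f\|_{\dot{B}^{1/p}_{p^\sharp,p}}^p$. Combining with $|c| \lesssim \|f'\|_\infty + \|f\|_{\dot{B}^{1/p}_{p^\sharp,p}}$ and absorbing constants gives $\mu(f(A)-f(B))^p \prec\prec C_p(\|f'\|_\infty + \|f\|_{\dot{B}^{1/p}_{p^\sharp,p}})^p\mu(A-B)^p$, which is exactly $|f(A)-f(B)|^p \prec\prec C_p(\|f'\|_\infty + \|f\|_{\dot{B}^{1/p}_{p^\sharp,p}})^p|A-B|^p$ since $\mu(|T|^p) = \mu(T)^p$. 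The final assertion is then immediate: for a fully symmetric operator space $\mathcal{E}$, the relation $x \prec\prec y$ forces $\|x\|_{\mathcal{E}} \leq \|y\|_{\mathcal{E}}$, so $\||f(A)-f(B)|^p\|_{\mathcal{E}} \lesssim (\|f'\|_\infty + \|f\|_{\dot{B}^{1/p}_{p^\sharp,p}})^p\||A-B|^p\|_{\mathcal{E}}$.

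I expect the only delicate point to be the passage to the limit in $N$. One must confirm that $\sum_{|j|\leq N}(f_j(A)-f_j(B))$ genuinely converges to $f(A)-f(B)$ in operator norm — which follows from $\|f_j(A)-f_j(B)\|_\infty \lesssim 2^j\|f_j\|_\infty\|A-B\|_\infty$ (see \eqref{peller_fav_l_infty}) together with $\sum_{j\in\mathbb{Z}} 2^j\|f_j\|_\infty \lesssim \|f\|_{\dot{B}^{1/p}_{p^\sharp,p}}$, valid because $p\leq 1$ — and that submajorisation, being encoded by the partial sums $\sum_{k=0}^n\mu(k,\cdot)^p$ which depend continuously on the operator in the operator-norm topology, is preserved under this limit (with the bounding constant passing to its limit on the right). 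Everything else is a routine assembly of results already established in the excerpt.
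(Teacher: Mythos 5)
Your proposal is correct and is essentially the paper's own argument: the paper proves this corollary in one line by "representing $f$ as $ct+\sum_{j\in\mathbb{Z}}f_j-f_j(0)$ and applying \eqref{p_ky_fan}", i.e.\ exactly your combination of Lemma \ref{besov_realisation}, Theorem \ref{submajor_thm} for each wavelet component, the Rotfel'd inequality, and Theorem \ref{besov_space_wavelet_characterisation} to sum the constants. The details you supply (operator-norm convergence of the series via \eqref{peller_fav_l_infty} and \eqref{sequential_bernstein}, and the passage to the limit in $N$ using continuity of singular values in the uniform norm) are precisely what the paper leaves implicit, so no substantive deviation or gap.
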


\subsection{$\mathcal{L}_p$-operator H\"older functions}\label{holder_section}
To complement Theorem \ref{main_sufficient_condition}, we study the related issue of operator H\"older estimates. It is well-known that all $\alpha$-H\"older functions belong to $\dot{B}^{\alpha}_{\infty,\infty}.$
The arguments in this section are inspired by those of Aleksandrov and Peller \cite[Section 5]{Aleksandrov-Peller-holder-2010}, with adaptations for the wavelet decomposition.    
We will take $0 < p \leq 1,$ and $\psi$ denotes a compactly supported $C^k$ wavelet where $k > \frac{2}{p}.$ Recall that if $f$ is a locally integrable function, we denote
\[
    f_j := \sum_{k\in \mathbb{Z}} \psi_{j,k}\langle f,\psi_{j,k}\rangle.
\]   
This series is finite on bounded subsets of $\mathbb{R}.$

The following lemma concerns the representation of a H\"older continuous function $f$ by a wavelet series. This issue is parallel to Lemma \ref{besov_realisation},
and the proof is similar.
\begin{lemma}\label{holder_wavelet_realisation}
    Let $f$ be a H\"older continuous function on $\mathbb{R}$ of order $\alpha \in (0,1).$ Then
    \[
        f(t) = f(0)+\sum_{j\in \mathbb{Z}} f_j(t)-f_j(0),\quad t \in \mathbb{R}
    \]
    where the series converges uniformly on compact subsets of $\mathbb{R}.$
\end{lemma}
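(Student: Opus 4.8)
The plan is to mimic the proof of Lemma \ref{besov_realisation} almost verbatim, with one new point: an $\alpha$-H\"older function with $\alpha<1$ has \emph{strictly} sublinear growth, and this is precisely what removes the polynomial correction term $ct$ that was unavoidable in the Lipschitz case.

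First I would record size estimates on the wavelet pieces. Since every $\alpha$-H\"older function lies in $\dot B^{\alpha}_{\infty,\infty}(\mathbb{R})$, the wavelet characterisation of Besov spaces (Theorem \ref{besov_space_wavelet_characterisation}, applicable as $k>\alpha$) gives $\|f_j\|_{\infty}\lesssim 2^{-j\alpha}$ for all $j\in\mathbb{Z}$, and then the Bernstein inequality $\|f_j'\|_{\infty}\lesssim 2^{j}\|f_j\|_{\infty}$ used in Lemma \ref{besov_realisation} yields $\|f_j'\|_{\infty}\lesssim 2^{j(1-\alpha)}$. Splitting the series at $j=0$: because $\alpha>0$ we have $\sum_{j\geq 0}\|f_j\|_{\infty}<\infty$, so $\sum_{j\geq 0}(f_j(t)-f_j(0))$ converges uniformly on $\mathbb{R}$ and is bounded; because $1-\alpha>0$ and $|f_j(t)-f_j(0)|\leq |t|\,\|f_j'\|_{\infty}$ for all $t$ (each $f_j$ is $C^1$ with bounded derivative), $\sum_{j<0}(f_j(t)-f_j(0))$ converges uniformly on every bounded interval. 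Hence $\sum_{j\in\mathbb{Z}}(f_j(t)-f_j(0))$ converges uniformly on compact sets to a continuous function, and $g(t):=f(t)-f(0)-\sum_{j\in\mathbb{Z}}(f_j(t)-f_j(0))$ is a well-defined continuous function with $g(0)=0$.

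Next I would show that all wavelet coefficients of $g$ vanish, exactly as in Lemma \ref{besov_realisation}. Since $\psi$ is compactly supported and the series converges uniformly on its support, the sum may be pulled out of the integral; using $\int_{\mathbb{R}}\psi=0$ (so constants contribute nothing) and orthonormality of the family $\{\psi_{j,k}\}$, one gets $\langle g,\psi_{l,m}\rangle=\langle f,\psi_{l,m}\rangle-\sum_{j\in\mathbb{Z}}\langle f_j,\psi_{l,m}\rangle=\langle f,\psi_{l,m}\rangle-\langle f,\psi_{l,m}\rangle=0$ for all $l,m\in\mathbb{Z}$. By the realisation of distributions modulo polynomials by wavelet series (as in the discussion preceding Lemma \ref{besov_realisation}), $g$ is therefore a polynomial.

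Finally I would pin down $g$ by a growth argument. The H\"older condition gives $|f(t)-f(0)|\leq C|t|^{\alpha}$; the high-frequency tail $\sum_{j\geq 0}(f_j(t)-f_j(0))$ is bounded; and for the low-frequency tail, each $f_j(t)-f_j(0)$ is bounded (by $2\|f_j\|_{\infty}$), so $|t|^{-1}(f_j(t)-f_j(0))\to 0$ as $|t|\to\infty$ for fixed $j$, while $|t|^{-1}|f_j(t)-f_j(0)|\leq \|f_j'\|_{\infty}\lesssim 2^{j(1-\alpha)}$ is summable over $j<0$ uniformly in $t$; dominated convergence then gives $\sum_{j<0}(f_j(t)-f_j(0))=o(|t|)$. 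Combining, $g(t)=O(|t|^{\alpha})+O(1)+o(|t|)=o(|t|)$ as $|t|\to\infty$, since $\alpha<1$. A polynomial which is $o(|t|)$ at infinity and vanishes at $0$ is identically zero, so $g\equiv 0$, which is the assertion. I expect the only real subtlety to be this last step --- upgrading the low-frequency tail from $O(|t|)$ to $o(|t|)$ --- since that is exactly where the hypothesis $\alpha<1$ enters and where the Lipschitz argument of Lemma \ref{besov_realisation} genuinely produces a nonzero constant $c$.
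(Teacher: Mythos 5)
Your argument is correct, and its skeleton is the same as the paper's: the bounds $\|f_j\|_\infty\lesssim 2^{-j\alpha}\|f\|_{\dot{B}^{\alpha}_{\infty,\infty}}$ from Theorem \ref{besov_space_wavelet_characterisation} together with the Bernstein inequality $\|f_j'\|_\infty\lesssim 2^j\|f_j\|_\infty$, the split at $j=0$ giving uniform convergence on compact sets, and the observation that all wavelet coefficients of the difference $g$ vanish, so that $g$ is a polynomial. Where you genuinely diverge is in identifying that polynomial. The paper shows that the sum $\sum_{j\in\mathbb{Z}}(f_j-f_j(0))$ is itself $\alpha$-H\"older (splitting the sum according to the size of $2^j$ relative to $|t-s|^{-1}$), so that $g$ is an $\alpha$-H\"older polynomial with $\alpha<1$ and hence a constant, which is then evaluated at $t=0$. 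You instead exploit growth: $f(t)-f(0)=O(|t|^{\alpha})$, the high-frequency tail is bounded, and the low-frequency tail is $o(|t|)$ by combining the pointwise bound $|f_j(t)-f_j(0)|\le\min\{2\|f_j\|_\infty,\,|t|\|f_j'\|_\infty\}$ with dominated convergence over $j<0$; since a polynomial that is $o(|t|)$ and vanishes at $0$ is identically zero, $g\equiv 0$. Both routes use $\alpha<1$ at exactly the same place, and yours is slightly more elementary in that it avoids proving H\"older continuity of the wavelet sum, at the modest cost of the $o(|t|)$ refinement (the crude $O(|t|)$ bound on the low frequencies would not suffice). Your remark that $\int_{\mathbb{R}}\psi=0$ is needed so that the constants $f(0)$, $f_j(0)$ do not contribute to the wavelet coefficients is a standard fact for compactly supported continuous wavelets and is implicitly used in the paper's Lemma \ref{besov_realisation} as well, so no gap there.
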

\begin{proof}
    Since $f$ is H\"older continuous of order $\alpha,$ we have $f \in \dot{B}^{\alpha}_{\infty,\infty}(\mathbb{R})$ and hence Theorem \ref{besov_space_wavelet_characterisation} implies
    \[
        \|f\|_{\dot{B}^{\alpha}_{\infty,\infty}} := \sup_{j\in \mathbb{Z}} 2^{j\alpha}\|f_j\|_{\infty}  < \infty.
    \]    
    Hence,
    \[
        |f_j(t)-f_j(0)| \leq 2\|f_j\|_{\infty} \leq 2^{1-j\alpha}\|f\|_{\dot{B}^{\alpha}_{\infty,\infty}}
    \]
    and hence the series
    \[
        \sum_{j\geq 0} f_j(t)-f_j(0)
    \]
    converges uniformly over $t \in \mathbb{R}$.
    
    For $K\geq 0,$ we have
    \[
        \sup_{-K\leq t \leq K}|f_j(t)-f_j(0)| \leq \sup_{-K\leq t\leq K}|t|\|f_j'\|_{\infty} = K\|f_j'\|_{\infty}.
    \]
    Applying the Bernstein-type inequality $\|f_j'\|_{\infty}\lesssim 2^{j}\|f_j\|_{\infty}$ \cite[Chapter 2, Theorem 3]{Meyer-wavelets-1992}, we arrive at
    \[
        \sup_{-K\leq t\leq K}|f_j(t)-f_j(0)| \lesssim K2^{j}\|f_j\|_{\infty}\leq K2^{j(1-\alpha)}\|f\|_{\dot{B}^{\alpha}_{\infty,\infty}}.
    \]
    Thus, the series
    \[
        \sum_{j<0} f_j(t)-f_j(0)
    \]
    converges uniformly over $-K\leq t \leq K.$ Since $K$ is arbitrary, we have that the series
    \[
        \sum_{j\in \mathbb{Z}} f_j(t)-f_j(0)
    \]
    converges uniformly over compact subsets of $\mathbb{R}.$ Next we prove that $\sum_{j\in \mathbb{Z}} f_j-f_j(0)$ is an $\alpha$-H\"older continuous function. 
    For all $t,s\in \mathbb{R},$ we have
    \begin{align*}
        \left|\sum_{j\in \mathbb{Z}} f_j(t)-f_j(s)\right| &\leq \sum_{2^{j}<|t-s|} |f_j(t)-f_j(s)| + \sum_{2^j\geq |t-s|}|f_j(t)-f_j(s)|\\
                                                        &\leq \sum_{2^j<|t-s|} |t-s|\|f_j'\|_{\infty}+\sum_{2^{j}\geq |t-s|} 2\|f_j\|_{\infty}\\
                                                        &\leq \sum_{2^j<|t-s|} |t-s|2^{j(1-\alpha)}\|f\|_{\dot{B}^{\alpha}_{\infty,\infty}} + 2\sum_{2^j \geq |t-s|} 2^{-j\alpha}\|f\|_{\dot{B}^{\alpha}_{\infty,\infty}}\\
                                                        &\lesssim |t-s|^{\alpha}\|f\|_{\dot{B}^{\alpha}_{\infty,\infty}}.
    \end{align*}
    That is, the function $\sum_{j\in \mathbb{Z}} f_j(t)-f_j(0)$ is H\"older continuous of order $\alpha.$
    Now we consider the difference
    \[
        f-\sum_{j\in \mathbb{Z}} f_j-f_j(0).
    \]
    This must be a polynomial, since all of its wavelet coefficients vanish (see the discussion preceding the proof of Lemma \ref{besov_realisation}).
    Since $f$ and $\sum_{j\in \mathbb{Z}} f_j-f_j(0)$ are $\alpha$-H\"older for $\alpha<1$, it follows that $f-\sum_{j\in \mathbb{Z}} f_j-f_j(0)$ is a polynomial
    of degree $0$, i.e. a constant. Hence there exists $c \in \mathbb{C}$ such that
    \[
        f(t) = c+\sum_{j\in \mathbb{Z}} f_j(t)-f_j(0).
    \]
    Substituting $t=0$ yields $c = f(0).$
\end{proof}

\begin{theorem}\label{main_holder_trick}
    Let $j\in \mathbb{Z}$ and let $f$ be a locally integrable function such that $f_j$ is bounded. Then for all $\alpha \in (0,1)$, $0 < p \leq 1$ we have
    \begin{equation*}
        \|f_j(A)-f_j(B)\|_{\frac{p}{\alpha}} \lesssim_{p,\alpha} 2^{j(\alpha+\frac{1}{p^\sharp})}\|f_j\|_{p^\sharp}\|A-B\|_p^\alpha
    \end{equation*}
    for all bounded operators $A$ and $B$ such that $A-B \in \mathcal{L}_p$.
\end{theorem}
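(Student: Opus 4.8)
The plan is to deduce this estimate by interpolating an $\mathcal{L}_p$ bound and an $\mathcal{L}_\infty$ bound on the single operator $Y:=f_j(A)-f_j(B)$, using the elementary inequality
\begin{equation*}
    \|Y\|_{\frac{p}{\alpha}}\le \|Y\|_\infty^{1-\alpha}\,\|Y\|_p^{\alpha}
\end{equation*}
valid for every compact operator $Y$. Indeed, since $\tfrac{p}{\alpha}-p=\tfrac{p(1-\alpha)}{\alpha}\ge 0$, we have $\mu(k,Y)^{p/\alpha}=\mu(k,Y)^p\,\mu(k,Y)^{p/\alpha-p}\le \|Y\|_\infty^{p/\alpha-p}\mu(k,Y)^p$ for every $k$; summing over $k$ gives $\|Y\|_{p/\alpha}^{p/\alpha}\le\|Y\|_\infty^{p/\alpha-p}\|Y\|_p^p$, and raising to the power $\alpha/p$ yields the displayed inequality.

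For the $\mathcal{L}_p$ bound I would invoke Lemma~\ref{quasi_fav_lemma}. One may assume $f_j\in L_{p^\sharp}(\mathbb{R})$, since otherwise the right-hand side of the theorem is infinite and there is nothing to prove. Lemma~\ref{quasi_fav_lemma} gives $\|f_j^{[1]}\|_{\mathrm{m}_p}\lesssim_p 2^{j/p}\|f_j\|_{p^\sharp}$, and the implication ``$f_j^{[1]}$ is an $\mathcal{L}_p$-bounded Schur multiplier $\Rightarrow$ $f_j$ is $\mathcal{L}_p$-operator Lipschitz'' recorded in Section~\ref{lp_lip_section} then gives
\begin{equation*}
    \|f_j(A)-f_j(B)\|_p\lesssim_p 2^{j/p}\,\|f_j\|_{p^\sharp}\,\|A-B\|_p .
\end{equation*}

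The $\mathcal{L}_\infty$ bound is where the one point of care lies: one should \emph{not} use an operator-Lipschitz estimate such as \eqref{peller_fav_l_infty}, because that would introduce a factor $\|A-B\|_\infty$, and the only bound available for it, $\|A-B\|_\infty\le\|A-B\|_p$, would downgrade the H\"older exponent from $\alpha$ to $1$. Instead I would use only the trivial spectral estimate $\|f_j(A)\|_\infty\le\|f_j\|_\infty$ (and likewise for $B$), so that
\begin{equation*}
    \|f_j(A)-f_j(B)\|_\infty\le 2\|f_j\|_\infty .
\end{equation*}
The wavelet Bernstein inequality \eqref{sequential_bernstein}, applied with the exponent $p^\sharp$ in place of $p$, then gives $\|f_j\|_\infty\lesssim_p 2^{j/p^\sharp}\|f_j\|_{p^\sharp}$.

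Combining the three ingredients,
\begin{equation*}
    \|Y\|_{\frac{p}{\alpha}}\lesssim_p\bigl(2^{j/p^\sharp}\|f_j\|_{p^\sharp}\bigr)^{1-\alpha}\bigl(2^{j/p}\|f_j\|_{p^\sharp}\,\|A-B\|_p\bigr)^{\alpha}=2^{j\left(\frac{1-\alpha}{p^\sharp}+\frac{\alpha}{p}\right)}\|f_j\|_{p^\sharp}\,\|A-B\|_p^{\alpha},
\end{equation*}
and the exponent is exactly the one claimed, since $\tfrac1p=1+\tfrac1{p^\sharp}$ forces $\tfrac{1-\alpha}{p^\sharp}+\tfrac{\alpha}{p}=\tfrac{1-\alpha}{p^\sharp}+\alpha+\tfrac{\alpha}{p^\sharp}=\alpha+\tfrac1{p^\sharp}$. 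There is thus no substantial analytic obstacle here: all the real content is already contained in Lemma~\ref{quasi_fav_lemma} and \eqref{sequential_bernstein}, and the only thing to get right is that the crude estimate $\|f_j(A)-f_j(B)\|_\infty\le 2\|f_j\|_\infty$ — rather than a sharper, $\|A-B\|_\infty$-dependent one — is precisely what produces the H\"older scaling $\|A-B\|_p^\alpha$.
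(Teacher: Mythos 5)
Your proposal is correct and follows essentially the same route as the paper: the same interpolation inequality $\|Y\|_{p/\alpha}\leq\|Y\|_\infty^{1-\alpha}\|Y\|_p^{\alpha}$, the trivial spectral bound $\|f_j(A)-f_j(B)\|_\infty\leq 2\|f_j\|_\infty$ combined with the wavelet Bernstein inequality \eqref{sequential_bernstein}, and Lemma \ref{quasi_fav_lemma} (via the Schur-multiplier-to-Lipschitz implication) for the $\mathcal{L}_p$ factor. Your bookkeeping of the power of $2^j$, giving $\alpha+\frac{1}{p^\sharp}$, matches the statement of the theorem.
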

\begin{proof}
    For each $j\in \mathbb{Z}$, we have
    \begin{equation*}
        \|f_j(A)-f_j(B)\|_{\frac{p}{\alpha}} = \||f_j(A)-f_j(B)|^{1/\alpha}\|_{p}^{\alpha} \leq \|f_j(A)-f_j(B)\|_\infty^{1-\alpha}\|f_j(A)-f_j(B)\|_{p}^{\alpha}.
    \end{equation*}
    Since $f_j$ is bounded, the bounds $\|f_j(A)\|_\infty,\|f_j(B)\|_\infty\leq \|f_j\|_\infty$ give
    \begin{equation*}
        \|f_j(A)-f_j(B)\|_{\infty}^{1-\alpha} \leq 2^{1-\alpha}\|f_j\|_{\infty}^{1-\alpha}.
    \end{equation*}
    Now we apply \eqref{sequential_bernstein}, which gives us
    \begin{equation*}
        \|f_j(A)-f_j(B)\|_{\infty}^{1-\alpha} \leq 2^{1-\alpha}\|f_j\|_{\infty}^{1-\alpha} \lesssim_{\alpha,p} 2^{\frac{j(1-\alpha)}{p^\sharp}}\|f_j\|_{p^\sharp}^{1-\alpha}.
    \end{equation*}
    Using Lemma \ref{quasi_fav_lemma}, we also have
    \begin{equation*}
        \|f_j(A)-f_j(B)\|_p^{\alpha} \lesssim_p 2^{\frac{j\alpha}{p}}\|f_j\|_{p^\sharp}^{\alpha}\|A-B\|_p^{\alpha}.
    \end{equation*}
    Hence,
    \begin{equation*}
        \|f_j(A)-f_j(B)\|_{\frac{p}{\alpha}} \lesssim_{p,\alpha} 2^{\frac{j}{p^\sharp}}\|f_j\|_{p^\sharp}\|A-B\|_p^{\alpha}.
    \end{equation*}
\end{proof}

Theorem \ref{main_holder_trick} implies the following sufficient condition for a function to be $\alpha$-H\"older in $\mathcal{L}_p$. Interestingly, the condition differs
depending on $p\leq \alpha$ or $p> \alpha$. With $p=1$, this recovers \cite[Theorem 5.3]{Aleksandrov-Peller-holder-2010}.
\begin{theorem}
    Let $0 < p \leq 1$ and $\alpha \in (0,1)$. If $f$ is an $\alpha$-H\"older function such that $f \in \dot{B}^{\alpha+\frac{1}{p^\sharp}}_{p^\sharp,\min\{1,\frac{p}{\alpha}\}}(\mathbb{R})$
    then for all bounded self-adjoint operators $A$ and $B$ with $A-B\in \mathcal{L}_p$ we have
    \begin{equation*}
        \|f(A)-f(B)\|_{\frac{p}{\alpha}} \lesssim_{p,\alpha} \|f\|_{\dot{B}^{\alpha+\frac{1}{p^\sharp}}_{p^\sharp,\min\{1,\frac{p}{\alpha}\}}}\|A-B\|_p^{\alpha}.
    \end{equation*}
\end{theorem}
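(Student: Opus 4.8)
The plan is to decompose $f$ into its wavelet series, bound the individual components $f_j(A)-f_j(B)$ by Theorem \ref{main_holder_trick}, and reassemble in $\mathcal{L}_{p/\alpha}$. The Besov index $\min\{1,p/\alpha\}$ in the hypothesis is exactly what makes the reassembly work: $\mathcal{L}_{p/\alpha}$ obeys the $(p/\alpha)$-triangle inequality when $p\le\alpha$ and the ordinary triangle inequality when $p\ge\alpha$.

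First I would invoke Lemma \ref{holder_wavelet_realisation}, which applies since $f$ is $\alpha$-H\"older: $f(t)=f(0)+\sum_{j\in\mathbb{Z}}(f_j(t)-f_j(0))$ with the partial sums converging uniformly on compact subsets of $\mathbb{R}$. Restricting to a compact interval containing the spectra of $A$ and $B$ and applying the continuous functional calculus (a contraction for the supremum norm), the constant terms cancel and
\begin{equation*}
    f(A)-f(B)=\sum_{j\in\mathbb{Z}}\bigl(f_j(A)-f_j(B)\bigr),
\end{equation*}
with convergence of the partial sums in operator norm. Each $f_j$ is bounded (being a wavelet block of an $\alpha$-H\"older function), so Theorem \ref{main_holder_trick} applies and gives
\begin{equation*}
    \|f_j(A)-f_j(B)\|_{\frac{p}{\alpha}}\lesssim_{p,\alpha}2^{j\left(\alpha+\frac{1}{p^\sharp}\right)}\|f_j\|_{p^\sharp}\,\|A-B\|_p^{\alpha}.
\end{equation*}

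Next I would split into cases. If $p\le\alpha$ then $p/\alpha\le1$, so $\min\{1,p/\alpha\}=p/\alpha$ and the $(p/\alpha)$-triangle inequality for $\mathcal{L}_{p/\alpha}$ gives
\begin{equation*}
    \|f(A)-f(B)\|_{\frac{p}{\alpha}}^{\frac{p}{\alpha}}\le\sum_{j\in\mathbb{Z}}\|f_j(A)-f_j(B)\|_{\frac{p}{\alpha}}^{\frac{p}{\alpha}}\lesssim_{p,\alpha}\|A-B\|_p^{p}\sum_{j\in\mathbb{Z}}\left(2^{j\left(\alpha+\frac{1}{p^\sharp}\right)}\|f_j\|_{p^\sharp}\right)^{\frac{p}{\alpha}}.
\end{equation*}
By the wavelet characterisation of Besov spaces (Theorem \ref{besov_space_wavelet_characterisation}), the last sum is comparable to $\|f\|_{\dot{B}^{\alpha+\frac{1}{p^\sharp}}_{p^\sharp,\frac{p}{\alpha}}}^{\frac{p}{\alpha}}$, and raising to the power $\alpha/p$ gives the claim. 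If instead $p\ge\alpha$ then $p/\alpha\ge1$, $\mathcal{L}_{p/\alpha}$ is a Banach space, $\min\{1,p/\alpha\}=1$, and the ordinary triangle inequality, Theorem \ref{main_holder_trick}, and Theorem \ref{besov_space_wavelet_characterisation} yield
\begin{equation*}
    \|f(A)-f(B)\|_{\frac{p}{\alpha}}\le\sum_{j\in\mathbb{Z}}\|f_j(A)-f_j(B)\|_{\frac{p}{\alpha}}\lesssim_{p,\alpha}\|A-B\|_p^{\alpha}\sum_{j\in\mathbb{Z}}2^{j\left(\alpha+\frac{1}{p^\sharp}\right)}\|f_j\|_{p^\sharp}\approx\|f\|_{\dot{B}^{\alpha+\frac{1}{p^\sharp}}_{p^\sharp,1}}\|A-B\|_p^{\alpha}.
\end{equation*}

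The one point deserving care — and the only step beyond routine bookkeeping — is reconciling the two modes of convergence of the wavelet series for $f(A)-f(B)$: the Besov hypothesis makes $\{2^{j(\alpha+1/p^\sharp)}\|f_j\|_{p^\sharp}\}_{j\in\mathbb{Z}}$ belong to $\ell_{\min\{1,p/\alpha\}}$, so by the estimate from Theorem \ref{main_holder_trick} the partial sums $\sum_{|j|\le N}(f_j(A)-f_j(B))$ are Cauchy in $\mathcal{L}_{p/\alpha}$; their $\mathcal{L}_{p/\alpha}$-limit therefore exists and, being simultaneously an operator-norm limit, coincides with $f(A)-f(B)$. Granting this, the displayed estimates hold for $f(A)-f(B)$ itself, completing the argument.
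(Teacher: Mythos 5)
Your proposal is correct and follows essentially the same route as the paper: decompose $f$ via Lemma \ref{holder_wavelet_realisation}, bound each block by Theorem \ref{main_holder_trick}, and resum using the $\nu$-triangle inequality with $\nu=\min\{1,\frac{p}{\alpha}\}$ together with the wavelet characterisation of the Besov seminorm (the paper handles both cases at once with $\nu$ rather than splitting into $p\le\alpha$ and $p\ge\alpha$, which is only a presentational difference). Your remark reconciling the operator-norm convergence of the series with its $\mathcal{L}_{p/\alpha}$-convergence is a point the paper passes over silently, and it is a welcome addition.
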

\begin{proof}
    With $\nu := \min\{1,\frac{p}{\alpha}\}$, the quasi-norm $\|\cdot \|_{\frac{p}{\alpha}}$ obeys a $\nu$-triangle inequality. Therefore, applying the representation from Lemma \ref{holder_wavelet_realisation} yields 
    \begin{equation*}
        \|f(A)-f(B)\|_{\frac{p}{\alpha}}^\nu \leq \sum_{j\in \mathbb{Z}} \|f_j(A)-f_j(B)\|_{\frac{p}{\alpha}}^\nu.
    \end{equation*}
    Theorems \ref{main_holder_trick} and \ref{besov_space_wavelet_characterisation} together imply
    \begin{equation*}
        \|f(A)-f(B)\|_{\frac{p}{\alpha}}^\nu \lesssim \sum_{j\in \mathbb{Z}} 2^{j\nu\left(\alpha+\frac{1}{p^\sharp}\right)}\|f_j\|_{p^\sharp}^\nu\|A-B\|_p^{\alpha \nu} \approx \|f\|_{\dot{B}^{\alpha+\frac{1}{p^\sharp}}_{p^\sharp,\nu}}^\nu \|A-B\|_p^{\alpha\nu}.
    \end{equation*}
\end{proof}

\subsection{Weak-type H\"older estimates}\label{weak_holder_section}
Aleksandrov and Peller have proved that for all $p\in [1,\infty]$, $\alpha \in (0,1)$ we have
\begin{equation*}
    \|f(A)-f(B)\|_{\frac{p}{\alpha},\infty} \lesssim_{p,\alpha} \|f\|_{C^{\alpha}}\|A-B\|_p^\alpha,\quad A=A^*,B=B^*\in \mathcal{B}(H),\,A-B\in \mathcal{L}_p
\end{equation*}
where $\|f\|_{C^\alpha}$ is the $\alpha$-H\"older norm (see \cite[Theorem 5.4]{Aleksandrov-Peller-holder-2010}). This result can be viewed as a complement to
the main result of \cite{cpsz}, which states that
\begin{equation*}
    \|f(A)-f(B)\|_{1,\infty} \lesssim \|f'\|_\infty \|A-B\|_1,\quad A=A^*,B=B^*\in \mathcal{B}(H),\,A-B\in \mathcal{L}_1.
\end{equation*}
In order to continue this theme, we will study H\"older-type estimates for $\|f(A)-f(B)\|_{\frac{p}{\alpha},\infty}$ where $0 < p < 1$.
    
The following argument is closely based on \cite[Theorem 5.1]{Aleksandrov-Peller-holder-2010}, the essential difference is that we use the wavelet decomposition
in place of the Littlewood-Paley decomposition. Note that by Theorem \ref{besov_space_wavelet_characterisation}, 
if $f \in \dot{B}^s_{p,q}(\mathbb{R})$ for some $s\in \mathbb{R}$, $p,q\in (0,\infty]$ then for every $j\in \mathbb{Z}$ we have $f_j \in L_{\infty}(\mathbb{R}).$
\begin{theorem}
    Let $\alpha \in (0,1)$ and $p \in (0,1]$. Let $f$ be an $\alpha$-H\"older function. Let $A$ and $B$ be self-adjoint bounded operators such that $A-B$ is compact. For all $n\geq 0$ we have
    \begin{equation}
        \mu(n,f(A)-f(B)) \lesssim_{p,\alpha} (1+n)^{-\frac{\alpha}{p}}\|f\|_{\dot{B}^{\alpha+\frac{1}{p^\sharp}}_{p^\sharp,\infty}}\left(\sum_{k=0}^n \mu(k,A-B)^p\right)^{\frac{\alpha}{p}}. 
    \end{equation}
\end{theorem}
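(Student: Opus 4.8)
The plan is to run a frequency-cutoff argument in the spirit of \cite[Theorem~5.1]{Aleksandrov-Peller-holder-2010}, but with the wavelet blocks $f_j$ in place of Littlewood--Paley pieces. We may assume $f\in\dot B^{\alpha+1/p^\sharp}_{p^\sharp,\infty}(\mathbb R)$, since otherwise the right-hand side is infinite. As $f$ is $\alpha$-H\"older, Lemma~\ref{holder_wavelet_realisation} gives $f=f(0)+\sum_{j\in\mathbb Z}(f_j-f_j(0))$ with uniform convergence on compacta, hence $f(A)-f(B)=\sum_{j\in\mathbb Z}(f_j(A)-f_j(B))$ with convergence in operator norm. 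The two block facts that drive everything are the trivial bound $\|f_j(A)-f_j(B)\|_\infty\le 2\|f_j\|_\infty$ and, on the other side, control of $f_j$ in $\mathcal L_p$; combining Theorem~\ref{besov_space_wavelet_characterisation} with the Bernstein inequality \eqref{sequential_bernstein} we record, for every $j\in\mathbb Z$,
$$\|f_j\|_{p^\sharp}\lesssim 2^{-j(\alpha+1/p^\sharp)}\|f\|_{\dot B^{\alpha+1/p^\sharp}_{p^\sharp,\infty}},\qquad \|f_j\|_\infty\lesssim 2^{-j\alpha}\|f\|_{\dot B^{\alpha+1/p^\sharp}_{p^\sharp,\infty}}.$$

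Fix $n\ge 0$ and an integer cutoff $N$ (to be chosen), and split $f(A)-f(B)=S_1+S_2$ where $S_1=\sum_{j\le N}(f_j(A)-f_j(B))$ and $S_2=\sum_{j>N}(f_j(A)-f_j(B))$. For the high-frequency tail, $\|S_2\|_\infty\le 2\sum_{j>N}\|f_j\|_\infty\lesssim_\alpha 2^{-N\alpha}\|f\|_{\dot B^{\alpha+1/p^\sharp}_{p^\sharp,\infty}}$, the geometric series converging because $\alpha>0$. For the low-frequency part, set $g^{(N)}:=\sum_{j\le N}(f_j-f_j(0))$; its wavelet blocks are $f_j$ for $j\le N$ and $0$ otherwise, and $S_1=g^{(N)}(A)-g^{(N)}(B)$. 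Using $\tfrac1p=1+\tfrac1{p^\sharp}$ (so that $\tfrac1p-\alpha-\tfrac1{p^\sharp}=1-\alpha>0$), the block estimates above and the Bernstein inequality $\|f_j'\|_\infty\lesssim 2^j\|f_j\|_\infty$ show that the series $\sum_{j\le N}\|f_j'\|_\infty$ and $\sum_{j\le N}2^j\|f_j\|_{p^\sharp}^p$ are geometric and sum to
$$\|(g^{(N)})'\|_\infty+\|g^{(N)}\|_{\dot B^{1/p}_{p^\sharp,p}}\lesssim_{p,\alpha} 2^{N(1-\alpha)}\|f\|_{\dot B^{\alpha+1/p^\sharp}_{p^\sharp,\infty}}$$
(the second seminorm being computed via Theorem~\ref{besov_space_wavelet_characterisation}). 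In particular $g^{(N)}$ is Lipschitz, so the submajorisation corollary following Theorem~\ref{submajor_thm} applies to $g^{(N)}$ and gives $\sum_{k=0}^n\mu(k,S_1)^p\lesssim_{p,\alpha}2^{Np(1-\alpha)}\|f\|_{\dot B^{\alpha+1/p^\sharp}_{p^\sharp,\infty}}^p\sum_{k=0}^n\mu(k,A-B)^p$; bounding $(n+1)\mu(n,S_1)^p$ by the left-hand sum and writing $R_n:=\big(\sum_{k=0}^n\mu(k,A-B)^p\big)^{1/p}$ yields $\mu(n,S_1)\lesssim (n+1)^{-1/p}2^{N(1-\alpha)}\|f\|_{\dot B^{\alpha+1/p^\sharp}_{p^\sharp,\infty}}R_n$.

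Combining the two halves, $\mu(n,f(A)-f(B))\le\mu(n,S_1)+\|S_2\|_\infty\lesssim \|f\|_{\dot B^{\alpha+1/p^\sharp}_{p^\sharp,\infty}}\big((n+1)^{-1/p}2^{N(1-\alpha)}R_n+2^{-N\alpha}\big)$. If $R_n=0$ then $A=B$ and the statement is trivial; otherwise, choosing the integer $N$ with $2^N\asymp (n+1)^{1/p}/R_n$ balances the two terms, both becoming $\lesssim (n+1)^{-\alpha/p}R_n^\alpha$, which is exactly the asserted inequality. I expect the only delicate points to be bookkeeping: first, that the geometric sums over $j$ run in the right directions --- the trivial $\mathcal L_\infty$ bound decays as $j\to+\infty$ while the $\mathcal L_p$-Lipschitz constant $\lesssim 2^{j(1-\alpha)}$ of $f_j$ decays as $j\to-\infty$, which is what dictates the particular high/low split; and second, verifying that $g^{(N)}$ genuinely has the claimed $\dot B^{1/p}_{p^\sharp,p}$ and Lipschitz seminorms so that the submajorisation corollary applies (alternatively one can bypass the corollary and redo the truncation argument of Theorem~\ref{submajor_thm} directly, invoking Lemma~\ref{infimum_lemma}, \eqref{peller_fav_l_infty} and Lemma~\ref{quasi_fav_lemma}). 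The remaining optimisation over $N$ is routine.
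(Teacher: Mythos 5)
Your argument is correct and follows essentially the same route as the paper: split $f(A)-f(B)$ at a frequency cutoff $N$, control the low-frequency part via the submajorisation estimate of Theorem \ref{submajor_thm} (summed with \eqref{p_ky_fan}) and the block bound $\|f_j\|_{p^\sharp}\lesssim 2^{-j(\alpha+1/p^\sharp)}\|f\|_{\dot B^{\alpha+1/p^\sharp}_{p^\sharp,\infty}}$, bound the high-frequency tail in operator norm by $2^{-N\alpha}$, and optimise $N\asymp\frac{1}{p}\log_2\frac{n+1}{R_n^p}$. Routing the low-frequency estimate through the Corollary applied to the truncated function $g^{(N)}$ rather than summing the blockwise submajorisations directly is only a repackaging of the same step, so no substantive difference from the paper's proof.
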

\begin{proof}
    Let $N \in \mathbb{Z}$, to be specified shortly. By the inequality \eqref{p_ky_fan}, for all $n\geq 0$ we have
    \begin{equation*}
        \sum_{k=0}^n \mu(k,\sum_{j\leq N} f_j(A)-f_j(B))^p \leq \sum_{j\leq N} \sum_{k=0}^n \mu(k,f_j(A)-f_j(B))^p 
    \end{equation*}    
    According to Theorem \ref{submajor_thm}, we have
    \begin{equation*}
        \sum_{k=0}^n \mu(k,f_j(A)-f_j(B))^p \lesssim_p 2^{j}\|f_j\|_{p^\sharp}^p\sum_{k=0}^n \mu(k,A-B)^p.
    \end{equation*}
    Therefore,
    \begin{equation*}
        \sum_{k=0}^n \mu(k,\sum_{j\leq N} f_j(A)-f_j(B))^p \lesssim_p \sum_{j\leq N} 2^{j}\|f_j\|_{p^\sharp}^p\left(\sum_{k=0}^n \mu(k,A-B)^p\right).
    \end{equation*}
    By Theorem \ref{besov_space_wavelet_characterisation}, for all $j \in \mathbb{Z}$ we have
    \begin{equation*}
        \|f_j\|_{p^\sharp} \lesssim 2^{-j(\alpha+\frac{1}{p^\sharp})}\|f\|_{\dot{B}^{\frac{1}{p^\sharp}+\alpha}_{p^\sharp,\infty}}.
    \end{equation*}
    Hence, taking into account that $1-\alpha p -\frac{p}{p^\sharp} = p(1-\alpha)>0$, we have
    \begin{align*}
        \sum_{k=0}^n \mu(k,\sum_{j\leq N} f_j(A)-f_j(B))^p &\lesssim_p \sum_{j\leq N} 2^{j(1-\alpha p-\frac{p}{p^\sharp})}\|f\|_{\dot{B}^{\frac{1}{p^\sharp}+\alpha}_{p^\sharp,\infty}}^p\left(\sum_{k=0}^n \mu(k,A-B)^p\right)\\
        &\lesssim_{p,\alpha} 2^{Np(1-\alpha)}\|f\|_{\dot{B}^{\frac{1}{p^\sharp}+\alpha}_{p^\sharp,\infty}}^p\left(\sum_{k=0}^n \mu(k,A-B)^p\right).
    \end{align*}
    That is,
    \begin{equation*}
        \left(\sum_{k=0}^n \mu(k,\sum_{j\leq N} f_j(A)-f_j(B))^p\right)^{\frac{1}{p}} \lesssim_{p,\alpha} 2^{N(1-\alpha)}\|f\|_{\dot{B}^{\frac{1}{p^\sharp}+\alpha}_{p^\sharp,\infty}}\left(\sum_{k=0}^n \mu(k,A-B)^p\right)^{\frac{1}{p}}.
    \end{equation*}
    It follows that
    \begin{equation}\label{low_frequency}
        \mu(n,\sum_{j\leq N} f_j(A)-f_j(B))\lesssim_{p,\alpha} (1+n)^{-\frac{1}{p}}2^{N(1-\alpha)}\|f\|_{\dot{B}^{\alpha+\frac{1}{p^\sharp}}_{p^\sharp,\infty}}\left(\sum_{k=0}^n \mu(k,A-B)^p\right)^{\frac{1}{p}}.
    \end{equation} 
    Putting this aside for the moment, we consider the norm $\left\|\sum_{j> N} f_j(A)-f_j(B)\right\|_\infty$. By the triangle inequality, this is controlled by
    \begin{equation*}
        \left\|\sum_{j> N} f_j(A)-f_j(B)\right\|_\infty \leq \sum_{j> N} \|f_j(A)-f_j(B)\|_\infty \leq \sum_{j> N} 2\|f_j\|_\infty.
    \end{equation*}
    By Theorem \ref{besov_space_wavelet_characterisation}, we have $\|f_j\|_\infty\lesssim_{\alpha} 2^{-j\alpha} \|f\|_{\dot{B}^{\alpha}_{\infty,\infty}}$. Therefore
    \begin{equation}\label{high_frequnecy}
        \left\|\sum_{j>N} f_j(A)-f_j(B)\right\|_\infty \lesssim_{\alpha} \sum_{j>N} 2^{-j\alpha}\|f\|_{\dot{B}^{\alpha}_{\infty,\infty}} \lesssim_{\alpha} 2^{-N\alpha}\|f\|_{\dot{B}^{\alpha}_{\infty,\infty}}.
    \end{equation}
    Now we combine \eqref{low_frequency} and \eqref{high_frequnecy} to estimate $\mu(n,f(A)-f(B)).$  Using the representation from Lemma \ref{holder_wavelet_realisation}, we have
    \[
        f(A)-f(B) = \sum_{j\in \mathbb{Z}} f_j(A)-f_j(B).
    \]
    Since $A$ and $B$ are bounded and the series in Lemma \ref{holder_wavelet_realisation} converges uniformly over compact subsets, this series converges in the operator norm.
    
    Therefore,
    \begin{align*}
        \mu(n,f(A)-f(B)) &= \mu\left(n,\sum_{j\leq N} f_j(A)-f_j(B) + \sum_{j> N} f_j(A)-f_j(B)\right)\\
                            &\leq \mu\left(n,\sum_{j\leq N} f_j(A)-f_j(B)\right)+\left\|\sum_{j> N} f_j(A)-f_j(B)\right\|_\infty\\
                            &\stackrel{\eqref{high_frequnecy}}{\lesssim_{\alpha}} \mu\left(n,\sum_{j\leq N} f_j(A)-f_j(B)\right)+ 2^{-N\alpha}\|f\|_{\dot{B}^{\alpha}_{\infty,\infty}}\\
                            &\stackrel{\eqref{low_frequency}}{\lesssim_{\alpha}} (1+n)^{-\frac{1}{p}}2^{N(1-\alpha)}\|f\|_{\dot{B}^{\alpha+\frac{1}{p^\sharp}}_{p^\sharp,\infty}}\left(\sum_{k=0}^n\mu(k,A-B)^p\right)^{1/p} + 2^{-N\alpha}\|f\|_{\dot{B}^{\alpha}_{\infty,\infty}}.
    \end{align*} 
    Now we choose $N\in \mathbb{Z}$ such that
    \begin{equation*}
        2^{-N-1} \leq (1+n)^{-\frac{1}{p}}\left(\sum_{k=0}^n \mu(k,A-B)^p\right)^{\frac{1}{p}} \leq 2^{-N}.
    \end{equation*}
    Hence,
    \begin{equation*}
        \mu(n,f(A)-f(B)) \lesssim_{\alpha,p} (\|f\|_{\dot{B}^{\frac{1}{p^\sharp}+\alpha}_{p^\sharp,\infty}}+\|f\|_{\dot{B}^{\alpha}_{\infty,\infty}}) ((1+n)^{-\frac{1}{p}+(1-\alpha)\frac{1}{p}}+(1+n)^{-\frac{\alpha}{p}})\left(\sum_{k=0}^n \mu(k,A-B)^p\right)^{\frac{\alpha}{p}}. 
    \end{equation*}
    Since $\dot{B}^{\frac{1}{p^\sharp}+\alpha}_{p^\sharp,\infty} \subseteq \dot{B}^{\alpha}_{\infty,\infty}$ (this follows from \eqref{sequential_bernstein}), the desired result follows.
\end{proof}
It follows immediately from the definition of the $\mathcal{L}_{\frac{p}{\alpha},\infty}$ quasi-norm \eqref{weak_lp_def} that we have the following:
\begin{theorem}\label{weak_holder_thm}
    Let $p \in (0,1]$ and $\alpha \in (0,1)$. Assume that $f \in \dot{B}^{\frac{1}{p^\sharp}+\alpha}_{p^\sharp,\infty}(\mathbb{R})$ is an $\alpha$-H\"older function. For all self-adjoint bounded operators $A$ and $B$ such that $A-B\in \mathcal{L}_p$ we have
    \begin{equation*}
        \|f(A)-f(B)\|_{\frac{p}{\alpha},\infty} \lesssim_{p,\alpha} \|f\|_{\dot{B}^{\frac{1}{p^\sharp}+\alpha}_{p^\sharp,\infty}} \|A-B\|_p^{\alpha}. 
    \end{equation*}
\end{theorem}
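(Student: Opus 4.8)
The plan is to obtain the estimate as an immediate consequence of the pointwise singular value bound established in the preceding theorem, so essentially no new work is required. First I would invoke the definition of the weak Schatten quasi-norm \eqref{weak_lp_def} with exponent $\frac{p}{\alpha}$, namely
$$
\|f(A)-f(B)\|_{\frac{p}{\alpha},\infty} = \sup_{n\geq 0}(n+1)^{\frac{\alpha}{p}}\mu(n,f(A)-f(B)).
$$
Since $f$ is $\alpha$-H\"older, lies in $\dot{B}^{\frac{1}{p^\sharp}+\alpha}_{p^\sharp,\infty}(\mathbb{R})$, and since $A-B\in\mathcal{L}_p$ is in particular compact, the preceding theorem applies and gives, for every $n\geq 0$,
$$
(n+1)^{\frac{\alpha}{p}}\mu(n,f(A)-f(B)) \lesssim_{p,\alpha} \|f\|_{\dot{B}^{\frac{1}{p^\sharp}+\alpha}_{p^\sharp,\infty}}\Big(\sum_{k=0}^n \mu(k,A-B)^p\Big)^{\frac{\alpha}{p}},
$$
the factor $(n+1)^{\frac{\alpha}{p}}$ cancelling the factor $(1+n)^{-\frac{\alpha}{p}}$ appearing in that estimate.

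Next I would control the truncated sum by the full one. Since $A-B\in\mathcal{L}_p$ we have $\sum_{k=0}^n \mu(k,A-B)^p \leq \sum_{k=0}^\infty \mu(k,A-B)^p = \|A-B\|_p^p$ for every $n$, hence $\big(\sum_{k=0}^n \mu(k,A-B)^p\big)^{\frac{\alpha}{p}}\leq \|A-B\|_p^{\alpha}$ uniformly in $n$. Taking the supremum over $n\geq 0$ in the previous display then yields
$$
\|f(A)-f(B)\|_{\frac{p}{\alpha},\infty} \lesssim_{p,\alpha} \|f\|_{\dot{B}^{\frac{1}{p^\sharp}+\alpha}_{p^\sharp,\infty}}\|A-B\|_p^{\alpha},
$$
which is the assertion.

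I do not expect any genuine obstacle here: all of the substantive analysis — the wavelet realisation of H\"older functions in Lemma \ref{holder_wavelet_realisation}, the submajorisation estimate of Theorem \ref{submajor_thm}, the splitting into low- and high-frequency parts, and the optimisation of the frequency cutoff $N$ — has already been carried out in the proof of the preceding theorem. The only point worth flagging is the role of the hypothesis $A-B\in\mathcal{L}_p$: the preceding theorem needs only that $A-B$ be compact and outputs an inequality in terms of the partial sums $\sum_{k=0}^n\mu(k,A-B)^p$, and it is precisely the stronger assumption $A-B\in\mathcal{L}_p$ that allows these partial sums to be replaced by the finite quantity $\|A-B\|_p^{\alpha}$ on the right-hand side.
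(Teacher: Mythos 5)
Your proposal is correct and is essentially identical to the paper's own argument: the paper deduces Theorem \ref{weak_holder_thm} immediately from the preceding singular-value estimate together with the definition \eqref{weak_lp_def} of the weak quasi-norm, exactly as you do by cancelling the factor $(1+n)^{-\alpha/p}$ and bounding the partial sums $\sum_{k=0}^n\mu(k,A-B)^p$ by $\|A-B\|_p^p$. Your closing remark about the role of the hypothesis $A-B\in\mathcal{L}_p$ versus mere compactness is also accurate.
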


Theorem \ref{weak_holder_thm} is related to results obtained in \cite{HSZ2019,Ricard-2018,Sobolev-jlms-2017}. The class $S_{d,\alpha}$
introduced by A.~V.~Sobolev \cite{Sobolev-jlms-2017} is the space of functions $f \in C^d(\mathbb{R} \setminus \{0\})$ such that 
$$
    |f^{(k)}(x)|\lesssim_k |x|^{\alpha-k},\quad 0\leq k\leq d,\; x \in \mathbb{R}\setminus \{0\}.
$$
It was proved in \cite[Theorem 1.2]{HSZ2019} that if $f \in S_{d,\alpha}$ where $d > \frac{1}{p}+2$ then 
\[
    \|f(A)-f(B)\|_{\frac{p}{\alpha}} \lesssim_{p,\alpha,f} \|A-B\|_{p}^{\alpha},\quad A,B\in \mathcal{B}_{\mathrm{sa}}(H),\; A-B\in \mathcal{L}_p.
\]
Theorem \ref{weak_holder_thm} complements that result in the sense that the class of functions is wider, but the $\mathcal{L}_{\frac{p}{\alpha}}$ estimate is weakened to an $\mathcal{L}_{\frac{p}{\alpha},\infty}$
estimate. 
We can prove that the class of functions is indeed larger using the following characterisation of the Besov seminorm. If $n\in \mathbb{N}$ is such that $\max\{\frac{1}{p^\sharp}-1,0\} < s < n$, then
\begin{equation}\label{holder_zygmund_characterisation}
    \|f\|_{\dot{B}^s_{p^{\sharp},\infty}} \approx \sup_{h>0} h^{-s}\left(\int_{-\infty}^\infty \left|\sum_{k=0}^n \binom{n}{k}(-1)^{n-k}f(t+kh)\right|^{p^{\sharp}}\,dt\right)^{1/p^\sharp}.  
\end{equation}
The choice of $n>s$ is irrelevant. This is well-known when $p^\sharp\geq 1$, see e.g. \cite[Theorem 2.39]{Sawano2018}. For $p^\sharp < 1,$ see \cite[Chapter 11, Theorem 18]{PeetreBook}.

\begin{theorem}
    If $d > \frac{1}{p}$ and $0 < \alpha < 1$, then
    \begin{equation*}
        S_{d,\alpha} \subset \dot{B}^{\alpha+\frac{1}{p^\sharp}}_{p^\sharp,\infty}(\mathbb{R}).
    \end{equation*}
\end{theorem}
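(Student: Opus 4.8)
The plan is to verify membership in $\dot B^{\alpha+\frac1{p^\sharp}}_{p^\sharp,\infty}(\mathbb R)$ directly through the finite-difference characterization \eqref{holder_zygmund_characterisation}. Fix $f\in S_{d,\alpha}$, and note that the bound $|f(x)|\lesssim|x|^{\alpha}$ with $\alpha>0$ shows $f$ extends to a continuous, locally bounded function on all of $\mathbb R$ with $f(0)=0$, so every finite difference of $f$ is well defined. Set $s:=\alpha+\frac1{p^\sharp}$; using $\frac1{p^\sharp}=\frac1p-1$ together with $0<\alpha<1$ one checks $0<s<\frac1p<d$ as well as $s>\frac1{p^\sharp}-1$, so \eqref{holder_zygmund_characterisation} is applicable with the difference order $n=d$. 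It therefore suffices to establish the uniform bound
\begin{equation*}
    \int_{\mathbb R}\Bigl|\sum_{k=0}^{d}\binom{d}{k}(-1)^{d-k}f(t+kh)\Bigr|^{p^\sharp}dt\ \lesssim_{d}\ h^{sp^\sharp},\qquad h>0
\end{equation*}
(with the integral replaced by an essential supremum when $p=1$).

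The argument splits the integral at $|t|=2dh$. On the near region $|t|\le 2dh$ every point $t+kh$ with $0\le k\le d$ satisfies $|t+kh|\le 3dh$, hence $|f(t+kh)|\lesssim (dh)^{\alpha}$ and the $d$-th difference is $\lesssim_{d}h^{\alpha}$ there; since this region has length $4dh$, its contribution to the integral is $\lesssim_{d}h^{\alpha p^\sharp+1}=h^{sp^\sharp}$. On the far region $|t|>2dh$ the stencil $[t,t+dh]$ stays away from $0$, so $f\in C^{d}$ on it and the mean-value form of the remainder for finite differences gives $\sum_{k}\binom dk(-1)^{d-k}f(t+kh)=h^{d}f^{(d)}(\xi_{t})$ for some $\xi_{t}$ in the stencil; as $|\xi_{t}|\ge |t|/2$ and $\alpha-d<0$, this is $\lesssim h^{d}|t|^{\alpha-d}$.

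The crux is then the tail integral $\int_{|t|>2dh}|t|^{(\alpha-d)p^\sharp}dt$: it converges precisely because $(d-\alpha)p^\sharp>1$, i.e.\ $d>\alpha+\frac1{p^\sharp}=s$, and this is exactly where the hypothesis $d>\frac1p$ (together with $\alpha<1$) is used. Evaluating it gives $\asymp h^{(\alpha-d)p^\sharp+1}$, so the far-region contribution is $\lesssim h^{dp^\sharp}\cdot h^{(\alpha-d)p^\sharp+1}=h^{sp^\sharp}$ as well. Summing the two regions and taking the supremum over $h>0$ yields $\|f\|_{\dot B^{s}_{p^\sharp,\infty}}<\infty$ by \eqref{holder_zygmund_characterisation}, which is the claim. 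I do not anticipate a genuine analytic obstacle: the only points that require care are confirming that $n=d$ is an admissible difference order in \eqref{holder_zygmund_characterisation} and that all implied constants are independent of $h$, both of which come down to the single inequality $s<d$ governing the competition between the decay of $|f^{(d)}|$ and the growth of the domain of integration.
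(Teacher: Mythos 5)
Your proposal is correct and follows essentially the same route as the paper: both verify membership via the finite-difference characterisation \eqref{holder_zygmund_characterisation} with difference order $n=d$, split the integral into a region of width comparable to $dh$ around the origin (bounded using $|f(x)|\lesssim|x|^{\alpha}$) and a far region (bounded by $h^{d}|t|^{\alpha-d}$ via the mean-value/Taylor-remainder form of the $d$-th difference), with convergence of the tail integral coming from $(d-\alpha)p^{\sharp}>1$. The only differences are cosmetic, such as splitting at $|t|=2dh$ rather than $|t|=dh+h$ and using the pointwise mean-value form instead of the integral remainder.
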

\begin{proof}
    Let $f \in S_{d,\alpha}.$ Since $d > \frac{1}{p}$, in particular we have $d > \alpha+\frac{1}{p}-1 = \alpha+\frac{1}{p^\sharp}$. It follows that we can take $n=d$
    in \eqref{holder_zygmund_characterisation}. Therefore,
    \[
        \|f\|_{\dot{B}^{\frac{1}{p^\sharp}+\alpha}_{p^\sharp,\infty}} \approx \sup_{h>0} h^{-\frac{1}{p^\sharp}-\alpha}\left(\int_{-\infty}^\infty \left|\sum_{k=0}^d \binom{d}{k}(-1)^{d-k}f(t+kh)\right|^{p^{\sharp}}\,dt\right)^{1/p^\sharp}.  
    \]
    We split the integral into the regions $|t|<dh+h$ and $|t|>dh+h.$ That is,
    \begin{align*}
        \int_{-\infty}^\infty \left|\sum_{k=0}^d \binom{d}{k}(-1)^{d-k}f(t+kh)\right|^{p^{\sharp}}\,dt &= \int_{-dh-h}^{dh+h} \left|\sum_{k=0}^d \binom{d}{k}(-1)^{d-k}f(t+kh)\right|^{p^{\sharp}}\,dt\\
                                                                                                        &\quad+\int_{|t|>dh+h} \left|\sum_{k=0}^d \binom{d}{k}(-1)^{d-k}f(t+kh)\right|^{p^{\sharp}}\,dt.
    \end{align*}
    By the mean value theorem, we have
    \begin{equation*}
        \sum_{k=0}^d \binom{d}{k}(-1)^{d-k}f(t+kh) = (dh)^d\int_0^1 (1-\theta)^{d-1}f^{(d)}(t+dh\theta)\,d\theta.
    \end{equation*}
    So by the definition of the Sobolev class $S_{d,\alpha},$ when $t>dh+h$ we have 
    \[
        \left|\sum_{k=0}^d \binom{d}{k}(-1)^{d-k}f(t+kh)\right| \lesssim_d h^d \int_0^1 (1-\theta)^{d-1}|t+dh\theta|^{\alpha-d}\,d\theta \lesssim_d h^d|t|^{\alpha-d}
    \]
    and when $t < -dh-h$,
    \[
        \left|\sum_{k=0}^d \binom{d}{k}(-1)^{d-k}f(t+kh)\right| \lesssim_d h^d |t+dh|^{\alpha-d}.
    \]
    Since $\alpha<1$ and $d > \frac{1}{p},$ the function $|t|^{p^\sharp(\alpha-d)}$ is integrable over $[h,\infty),$ and therefore
    \[
        \int_{|t|>dh+h}\left|\sum_{k=0}^d \binom{d}{k}(-1)^{d-k}f(t+kh)\right|^{p^\sharp}\,dt \lesssim_d \int_{h}^\infty h^{dp^\sharp}|t|^{p^{\sharp}(\alpha-d)}\,dt \approx_{d,p} h^{\alpha p^\sharp+1}.
    \]
    On the other hand, for $|t|<dh+h$ we use the estimate
    \begin{equation*}
        \left|\sum_{k=0}^d \binom{d}{k}(-1)^{d-k}f(t+kh)\right| \lesssim_d \max_{0\leq \theta \leq dh} |t+\theta|^{\alpha} \leq (|t|+dh)^{\alpha} \lesssim_d |t|^{\alpha}+h^{\alpha}.
    \end{equation*}
    Therefore,
    \[
        \int_{|t|\leq dh+h}\left|\sum_{k=0}^d \binom{d}{k}(-1)^{d-k}f(t+kh)\right|^{p^\sharp}\,dt \lesssim_{d,p} \int_{|t|<dh+h} |t|^{\alpha p^\sharp}+h^{\alpha p^\sharp}\,dt \approx_{d,p} h^{\alpha p^{\sharp}+1}.
    \]
    It follows that for $h > 0$ we have
    \begin{equation*}
        h^{-\alpha-\frac{1}{p^\sharp}}\left(\int_{-\infty}^\infty \left|\sum_{k=0}^d \binom{d}{k}(-1)^{d-k}f(t+kh)\right|^{p^\sharp}\,dt\right)^{\frac{1}{p^\sharp}} \lesssim_{d,p} h^{\frac{1}{p^{\sharp}}+\alpha-\frac{1}{p^{\sharp}}-\alpha} = 1.
    \end{equation*}
    Taking the supremum over $h > 0$ yields $f \in \dot{B}^{\alpha+\frac{1}{p^\sharp}}_{p^\sharp,\infty}(\mathbb{R})$ from \eqref{holder_zygmund_characterisation}.
    
    The embedding is strict, because $S_{d,\alpha}$ is not closed under translation while $\dot{B}^{\alpha+\frac{1}{p^\sharp}}_{p^\sharp,\infty}(\mathbb{R})$ is.
\end{proof} 

\appendix

\section{Automatic complete boundedness of $\mathcal{L}_p$-bounded Schur multipliers}
The following is a recently published result of Aleksandrov and Peller \cite[Theorem 3.1]{Aleksandrov-Peller-acb-2020}.
\begin{theorem}\label{acb}
    Let $A \in M_n(\mathbb{C})$ be a matrix, where $1\leq n\leq \infty$. Let $N\geq 1$ and denote by $\mathrm{id}_{M_N(\mathbb{C})}$ the $N\times N$ matrix with all entries equal to $1$. Then
    for all $0 < p < 1$ we have
    \begin{equation*}
        \|A\|_{\mathrm{m}_p} = \|A\otimes \mathrm{id}_{M_{N}(\mathbb{C})}\|_{\mathrm{m}_p}.
    \end{equation*}
\end{theorem}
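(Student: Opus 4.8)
The plan is to establish the nontrivial inequality $\|A\otimes\mathrm{id}_{M_N(\mathbb{C})}\|_{\mathrm{m}_p}\le\|A\|_{\mathrm{m}_p}$; the reverse inequality is immediate, since $A$ is a submatrix of $A\otimes\mathrm{id}_{M_N(\mathbb{C})}$ — take the rows $\{(i,1)\}_i$ and columns $\{(j,1)\}_j$ and recall that every block of $A\otimes\mathrm{id}_{M_N(\mathbb{C})}$ equals $A$ — so that adding rows and columns cannot decrease the $\mathrm{m}_p$-norm.

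For the main inequality I would first assume $n<\infty$ and index the rows and columns of $A\otimes\mathrm{id}_{M_N(\mathbb{C})}$ by pairs in $\{1,\dots,n\}\times\{1,\dots,N\}$, so that its $((i,a),(j,b))$ entry is simply $A_{ij}$. By Lemma~\ref{rank_one_suffices} it suffices to bound $\|(A\otimes\mathrm{id}_{M_N(\mathbb{C})})\circ(\xi\otimes\eta)\|_p$ for unit vectors $\xi,\eta\in\ell_2^n\otimes\ell_2^N\cong\ell_2^{nN}$. Writing $\xi_i:=(\xi_{(i,a)})_{a\le N}\in\ell_2^N$ for the $i$-th ``slice'' of $\xi$ and $d_i:=\|\xi_i\|_{\ell_2^N}$, and similarly $\eta_j$, $e_j:=\|\eta_j\|_{\ell_2^N}$, we have $\sum_i d_i^2=\|\xi\|^2=1$ and $\sum_j e_j^2=1$. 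The key claim, which I would verify by a direct computation of matrix entries, is that
\begin{equation*}
    (A\otimes\mathrm{id}_{M_N(\mathbb{C})})\circ(\xi\otimes\eta)=U\bigl(A\circ(d\otimes e)\bigr)V^{*},
\end{equation*}
where $d\otimes e\in M_n(\mathbb{C})$ is the rank-one matrix with entries $d_ie_j$ and $U,V\colon\ell_2^n\to\ell_2^n\otimes\ell_2^N$ are the maps determined by $Ue_i:=e_i\otimes\widehat{\xi_i}$ and $Ve_j:=e_j\otimes\overline{\widehat{\eta_j}}$, with $\widehat{\xi_i}:=\xi_i/d_i$ and $\widehat{\eta_j}:=\eta_j/e_j$ when the slices are nonzero (and an arbitrary fixed unit vector otherwise; the conjugation in $Ve_j$ is a routine bookkeeping device). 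These $U$ and $V$ are isometries because the vectors $\{Ue_i\}_i$ lie in the pairwise orthogonal subspaces $\mathbb{C}e_i\otimes\ell_2^N$ and are unit vectors, and the entrywise identity reduces to the relations $d_i\widehat{\xi_i}=\xi_i$ and $e_j\widehat{\eta_j}=\eta_j$.

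Granting the claim, and using that the $\mathcal{L}_p$-quasi-norm is unchanged under left- and right-multiplication by isometries (equivalently $\|UXV^{*}\|_p=\|X\|_p$ for isometries $U,V$), we obtain
\begin{equation*}
    \|(A\otimes\mathrm{id}_{M_N(\mathbb{C})})\circ(\xi\otimes\eta)\|_p=\|A\circ(d\otimes e)\|_p\le\|A\|_{\mathrm{m}_p},
\end{equation*}
the last inequality by Lemma~\ref{rank_one_suffices} applied to $A$, since $d$ and $e$ are unit vectors of $\ell_2^n$. Taking the supremum over $\xi,\eta$ and applying Lemma~\ref{rank_one_suffices} to $A\otimes\mathrm{id}_{M_N(\mathbb{C})}$ completes the case $n<\infty$. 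The case $n=\infty$ then follows from the definition of the $\mathrm{m}_p$-norm of an infinite matrix: every finite submatrix of $A\otimes\mathrm{id}_{M_N(\mathbb{C})}$ is a submatrix of $A_0\otimes\mathrm{id}_{M_N(\mathbb{C})}$ for some finite submatrix $A_0$ of $A$, and $\|A_0\otimes\mathrm{id}_{M_N(\mathbb{C})}\|_{\mathrm{m}_p}=\|A_0\|_{\mathrm{m}_p}\le\|A\|_{\mathrm{m}_p}$ by the finite case.

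The main obstacle I anticipate is recognising the factorisation above — more precisely, resisting the temptation to split the Schur product along a singular-value or block decomposition of $\xi$ and $\eta$. Any such splitting would invoke the $p$-triangle inequality for $\|\cdot\|_{\mathrm{m}_p}$ and thereby introduce factors growing with $N$ (such as $N^{1-p}$), which would be fatal. The point is rather that $A\otimes\mathrm{id}_{M_N(\mathbb{C})}$ is ``block-constant'', so that Schur-multiplying it by a rank-one matrix exactly contracts the $\ell_2^N$-directions and records only the slice-norms $d_i,e_j$, which automatically reassemble into genuine unit vectors; once this is seen, no quasi-triangle inequality is needed at all.
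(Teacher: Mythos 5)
Your proof is correct and is essentially the same as the paper's: your isometries $U,V$ and the rank-one matrix $d\otimes e$ are exactly the maps $Q_u,Q_v$ and the vector $\widetilde{u}\otimes\widetilde{v}$ in the paper's factorisation $(\mathrm{id}_{M_N(\mathbb{C})}\otimes A)\circ(u\otimes v)=Q_u\bigl(A\circ\widetilde{u}\otimes\widetilde{v}\bigr)Q_v^{*}$, combined with the same reduction to rank-one multipliers via Lemma~\ref{rank_one_suffices}. The only cosmetic differences are your handling of zero slices (arbitrary unit vector versus the paper's zero convention, which only yields a contraction) and your separate finite-to-infinite passage, which the paper's argument absorbs directly.
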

In other words, bounded Schur multipliers of $\mathcal{L}_p$ are automatically completely bounded.
The analogous statement for $p=1$ is well-known, see \cite[Theorem 5.1]{Pisier-book-2001}.
For the sake of completeness we include a proof of Theorem \ref{acb}. The proof is different from that of \cite{Aleksandrov-Peller-acb-2020}, and is
instead closely modelled on a proof for the $p=1$ case due to Smith \cite[Theorem 2.1]{Smith-cb-module-maps-jfa-1991}.

Recall that we denote by $\ell_2^n$ the $n$-dimensional Hilbert space.
\begin{proof}[Proof of Theorem \ref{acb}]
Let $1\leq n\leq\infty$ and $N\geq 1$. Let $u \in \ell_2^n\otimes \ell_2^N$ be a unit vector. Write the components of $u$ as $u =\sum_{j,l} u_{j,l}e_j\otimes e_l$. Consider the mapping:
\begin{equation*}
    Q_u:\ell_2^n\to \ell_2^n\otimes \ell_2^N
\end{equation*}
given by
\begin{equation*}
    Q_{u}e_j = \begin{cases} 
                    (\sum_{l=1}^N |u_{j,l}|^2)^{-1/2}\sum_{l=1}^N u_{j,l}e_j\otimes e_l,&\text{ if } \sum_{l=1}^N |u_{j,l}|^2 \neq 0\\
                    0,&\text{ otherwise.}
                \end{cases}
\end{equation*}

The adjoint of $Q_u$ is easily computed. We have
\begin{equation*}
    Q_u^*(e_{j}\otimes e_l) = e_j\overline{u}_{j,l}(\sum_{r=1}^N |u_{j,r}|^2)^{-1/2},\quad 1\leq j\leq n,\, 1\leq l\leq N
\end{equation*}
or $Q_u^*(e_j\otimes e_l) = 0$ if $\sum_{r=1}^N |u_{j,r}|^2 = 0$.

Then we compute $Q_u^*Q_ue_j$. If $\sum_{l=1}^N |u_{j,l}|^2 = 0$ then $Q_u^*Q_ue_j = 0$ and so assuming otherwise we have for every $1\leq j\leq n$,
\begin{align*}
    Q_u^*Q_ue_j &= \left(\sum_{l=1}^N |u_{j,l}|^2\right)^{-1/2}\sum_{r=1}^N u_{j,r}Q_u^*(e_{j}\otimes e_r)\\
                &= \left(\sum_{l=1}^N |u_{j,l}|^2\right)^{-1/2}\sum_{r=1}^N u_{j,r}\overline{u}_{j,r}(\sum_{l=1}^N |u_{j,l}|^2)^{-1/2}e_j\\
                &= e_j \cdot \frac{\sum_{r=1}^N |u_{j,r}|^2}{\sum_{r=1}^N |u_{j,r}|^2}\\
                &= e_j.
\end{align*}
So $Q_u$ is indeed a contraction.

Given $u \in \ell_2^n\otimes \ell_2^N$, define $\widetilde{u} \in \ell_{2}^n$ as,
\begin{equation*}
    \widetilde{u} = \sum_{j=1}^n (\sum_{l=1}^N |u_{j,l}|^2)^{1/2}e_j.
\end{equation*}
We have that $\|\widetilde{u}\|_{\ell_2^n} = \|u\|_{\ell_2^n\otimes \ell_2^N}$.

We now assert that
\begin{equation}\label{inflation}
    (\mathrm{id}_{M_N(\mathbb{C})}\otimes A)\circ (u\otimes v) = Q_u(A\circ \widetilde{u}\otimes \widetilde{v})Q_v^*.
\end{equation}
It suffices to check \eqref{inflation} entrywise. On the left hand side, we have
\begin{equation*}
    \langle e_{j,l_1},(\mathrm{id}_{M_N(\mathbb{C})}\otimes A)\circ (u\otimes v)e_{k,l_2}\rangle = A_{j,k}u_{j,l_1}\overline{v_{k,l_2}}, \quad 1\leq j\leq n, 1\leq k\leq n, 1\leq l_1,l_2\leq N
\end{equation*}
and on the right
\begin{align*}
    \langle e_{j,l_1},Q_u(A\circ (\widetilde{u}\otimes \widetilde{v}))Q_v^*e_{k,l_2}\rangle &= \langle Q_u^*e_{j,l_1},(A\circ \widetilde{u}\otimes \widetilde{v})Q_v^*e_{k,l_2}\rangle\\
                                                                                            &= u_{j,l_1}\overline{v_{k,l_2}}(\sum_{r=1}^N |u_{j,r}|^2)^{-1/2}(\sum_{r=1}^N |u_{k,r}|^2)^{-1/2}\langle e_j,(A\circ \widetilde{u}\otimes \widetilde{v})e_k\rangle\\
                                                                                            &= A_{j,k}u_{j,l_1}\overline{v_{k,l_2}}.
\end{align*}
This verifies \eqref{inflation}.

Since $Q_u$ and $Q_v$ are contractions, \eqref{inflation} implies that
\begin{equation*}
    \|(\mathrm{id}_{M_N(\mathbb{C})}\otimes A)\circ (u\otimes v)\|_{p} \leq \|A\|_{\mathrm{m}_p}\|u\|_{\ell_2^n\otimes \ell_2^N}\|v\|_{\ell_2^n\otimes \ell_2^N}.
\end{equation*}
Taking the supremum over $u,v \in \ell_2^n\otimes \ell_2^N$ with norm at most $1$ and using Lemma \ref{rank_one_suffices} yields the conclusion
\begin{equation*}
    \|\mathrm{id}_{M_N(\mathbb{C})}\otimes A\|_{\mathrm{m}_p} \leq \|A\|_{\mathrm{m}_p}.
\end{equation*}
The reverse inequality is clear.
\end{proof}

\end{document}